\pgfplotsset{compat=1.12}
\definecolor{db}{RGB}{0, 0, 130}
\numberwithin{equation}{section}
\newcommand{\R}{\mathbb{R}}
\newcommand{\Rd}{{\R^d}}
\newcommand{\N}{\mathbb{N}}
\newcommand{\EE}{\mathbb{E}}
\def \limbashaut#1#2#3{\mathrel{\mathop{\kern 0pt#1}\limits_{#2}^{#3}}}
\def\namedlabel#1#2{\begingroup
    #2%
    \def\@currentlabel{#2}%
    \phantomsection\label{#1}\endgroup
}
\newcommand{\myitem}[1]{%
\item[#1]\protected@edef\@currentlabel{#1}%
}
\newtheorem{definition}{Definition}[section]
\newtheorem{theorem}[definition]{Theorem}
\newtheorem*{theorem*}{Theorem}
\newtheorem{prop}[definition]{Proposition}
\newtheorem{lemma}[definition]{Lemma}
\newtheorem{assumption}[definition]{Assumption}
\author{Konstantinos Dareiotis\footnote{K.Dareiotis@leeds.ac.uk}, El Mehdi Haress\footnote{E.Haress@leeds.ac.uk}, Khoa L\^e\footnote{K.Le@leeds.ac.uk}
\\ University of Leeds, School of Mathematics
} 
\date{\empty}
\title{ \Large{\textbf{Uniform pathwise stability of additive singular SDEs driven by fractional Brownian motion}}} 
\begin{document}

\maketitle

\begin{abstract}
We study the long-time behaviour of solutions to a class of $d$-dimensional stochastic differential equations driven by fractional Brownian motion with Hurst parameter $H \in (0,1)$. 
The drift consists of a dissipative Lipschitz term and a singular term of regularity $\gamma >1-1/(2H)$ in Besov-H\"older scales.
We establish 
well-posedness and, through a Markovian enhancement, existence of an invariant measure.
If the singular contribution is sufficiently small, we prove exponential contraction of solutions, and thereby, uniqueness of the invariant measure.
Our methods rely on uniform pathwise estimates which utilise together the dissipativity of the drift and the regularisation effect of the noise.
\end{abstract}

\noindent\textit{\textbf{Keywords and phrases: }Regularisation by noise, fractional Brownian motion, non-Markovian processes, invariant measure, uniform stability.} 

\medskip

\noindent\textbf{MSC2020 subject classification: }
60H10, 
60G22, 
60H50, 
37A25. 

\medskip

\noindent\textbf{Acknowledgement: }The authors acknowledge support from the Engineering \& Physical Sciences Research Council (EPSRC), grant number EP/Y016955/1.

\tableofcontents

\section{Introduction}
Let $d\ge1$ be an integer dimension and $t_0\in\R$ be a fixed initial time. 
We consider the following $d$-dimensional  Stochastic Differential Equation (SDE)
\begin{align}\label{eq:sde}
dX_t =  F(X_t) dt +  b(X_t)dt + dB_t, \, t \ge t_0, \  X_{t_0}=x\in\R^d,
\end{align}
where $F$ is a dissipative drift, $b$ is an irregular drift of regularity $\gamma\in(-\infty,1]$ in the Besov-H\"older scales, and $B$ is a $d$-dimensional fractional Brownian motion (fBm) with Hurst parameter $H \in (0,1)$.

Equation \eqref{eq:sde} is motivated in part from the literature on interacting particle systems where the drift is written as the sum of a repulsive force representing the singularity and a conservative force (e.g. Coulomb gases \cite{serfaty2015coulomb}, Dyson Brownian Motion \& Log-Gases \cite{forrester2010log}). Equation \eqref{eq:sde} can also be viewed as a simplification of stochastic partial differential equations of the type 
\begin{align}\label{eq.spde}
\partial u = \Delta u + F(u) + b(u) + \xi 
\end{align}
where $\xi$ is a space-time white noise, $\Delta+F$ plays the role of a dissipative drift, and $b$ is an irregular drift. 
Such models have been studied in  the literature. To name a few, \cite{BounebacheZambotti,athreya2020well} consider the case when  $F=0$ and $b$ is a measure, \cite{nualartpardoux,zambottireflection} consider the case when $b$ is a reflective measure, \cite{scarpa} considers the stochastic Allen--Cahn equation with logarithmic potential.

Our main purpose in the current work is to gain better understanding of the interlay between the different natures of the two drifts. In particular, we focus on well-posedness and the long-time behaviour of solutions to \eqref{eq:sde}. Our main results, which are detailed in Sections \ref{sec:well-posedness} and \ref{sec:longtime}, can be summarised in a simplified form as follows. 
\begin{theorem*}
    \textbf{(I)} If $\gamma >1/2-1/(2H)$ and $F$ has linear growth, then there exists a weak solution to the SDE \eqref{eq:sde}. Moreover, if $\gamma >1-1/(2H)$ and $F$ is Lipschitz continuous, then there exists a unique strong solution to the SDE \eqref{eq:sde}.

    \textbf{(II)} If $\gamma >1-1/(2H)$ and $F$ is Lipschitz continuous, then there exists an invariant measure associated to \eqref{eq:sde}. Moreover, if the Besov--H\"older norm of $b$ is sufficiently small, then moments of the difference of any two solutions decay exponentially and consequently, the invariant measure is unique.
\end{theorem*}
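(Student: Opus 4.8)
The plan is to derive everything from a single pathwise stability estimate for the difference of two solutions driven by the same realisation of $B$, and then convert it into statements about invariant measures. Let $X$ and $Y$ solve \eqref{eq:sde} with the same fractional path $B$ but with different initial data $x,y$ at the same initial time $t_0$, and set $Z=X-Y$. Since the noise is additive and common to both, it cancels, and $Z$ solves the random equation
\begin{align*}
Z_t = (x-y) + \int_{t_0}^t \bigl(F(X_r)-F(Y_r)\bigr)\diff r + \int_{t_0}^t \bigl(b(X_r)-b(Y_r)\bigr)\diff r.
\end{align*}
I would treat the two integrals on very different footings. For the dissipative part, the Lipschitz and dissipativity hypotheses on $F$ give a pointwise bound $\langle Z_r, F(X_r)-F(Y_r)\rangle \le -\lambda |Z_r|^2$ for some $\lambda>0$, which is the source of the contraction. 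The singular part cannot be handled pointwise because $b$ lives only in a Besov--H\"older space of (possibly negative) regularity $\gamma$; instead I would invoke the stochastic sewing / nonlinear Young estimates established earlier in the paper to control $\int_s^t (b(X_r)-b(Y_r))\diff r$. The crucial point is a bound, in $L^m$ and on short intervals, of the shape
\begin{align*}
\Bigl\| \int_s^t \bigl(b(X_r)-b(Y_r)\bigr)\diff r \Bigr\|_{L^m} \lesssim \|b\|_{\C^\gamma}\,|t-s|^{\theta}\, \sup_{r\in[s,t]}\|Z_r\|_{L^m},
\end{align*}
with a time exponent $\theta>1/2$; this is exactly where $\gamma>1-1/(2H)$ enters, forcing the averaged field to be $\C^{1+}$ so that averaging the irregular drift along the path returns a quantity that is Lipschitz in the separation $Z$ with constant proportional to $\|b\|_{\C^\gamma}$.

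The second step is to combine these two effects into exponential decay of $\EE|Z_t|^m$. Working with $|Z_t|^m$ (or with $|Z_t|^2$ and then bootstrapping moments), the dissipative term produces a factor $e^{-c(t-t_0)}$, while the singular term contributes a perturbation proportional to $\|b\|_{\C^\gamma}$. When $\|b\|_{\C^\gamma}$ is sufficiently small the contraction dominates, and a Gronwall-type comparison yields
\begin{align*}
\EE|X_t-Y_t|^m \le C\, e^{-c(t-t_0)}\,|x-y|^m,
\end{align*}
uniformly in $t_0$. This is precisely the claimed exponential decay of moments of differences.

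For the invariant measure I would use a pullback (backward) construction rather than a Markov semigroup, since the fBm-driven dynamics is non-Markovian. Existence follows from uniform-in-time moment bounds: the dissipativity of $F$ alone forces $\sup_{t\ge t_0}\EE|X_t^{t_0,x}|^m<\infty$, with $b$ entering only as a lower-order perturbation controlled by the same sewing estimates; this gives tightness of the family of laws $\{\mathrm{Law}(X_0^{t_0,x})\}_{t_0\le 0}$ as $t_0\to-\infty$, and any weak subsequential limit is an invariant measure. Uniqueness is then immediate from the contraction: under the smallness of $\|b\|_{\C^\gamma}$, the estimate above shows that $\{X_0^{t_0,x}\}_{t_0}$ is Cauchy in $L^m$ as $t_0\to-\infty$, with a limit independent of $x$, so the pullback limit --- and hence the invariant measure --- is unique.

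The main obstacle I anticipate is making the stability estimate \emph{uniform in time}. The regularisation-by-noise bounds are intrinsically local, and their constants typically degrade with the length of the time interval; a naive iteration over $[t_0,t]$ would accumulate these constants and destroy the exponential gain coming from $F$. The hard part is therefore to run the sewing argument on intervals of fixed (say unit) length, combine each local estimate with the one-step dissipative contraction, and verify that the resulting constants can be chosen independently of the interval, so that the per-step contraction factor survives when the estimates are chained across the whole half-line. This is where the interplay highlighted in the abstract, between dissipativity and regularisation, must be balanced quantitatively.
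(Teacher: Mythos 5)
Your central stability estimate is essentially the paper's: the decomposition of $Z=X-Y$ into a dissipative part handled pointwise via $\langle Z_r,F(X_r)-F(Y_r)\rangle\le-\lambda|Z_r|^2$ and a singular part handled by stochastic sewing on unit-length intervals, with the sewing constant proportional to $\|b\|_{\mathcal{C}^\gamma}$ and uniform in the location of the interval, is exactly how the paper obtains $\|X_t-Y_t\|_{L_m}\le|x-y|e^{(-\kappa_1+\mathbf{M}\|b\|_{\mathcal{C}^\gamma})(t-t_0)}$ (Propositions \ref{prop:reg3} and \ref{prop:stab-lip}); the uniform moment bound via comparison with the OU process is likewise the paper's Proposition \ref{prop:unif-moments}. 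Where you genuinely diverge is the treatment of the invariant measure: you propose a pullback construction ($t_0\to-\infty$), whereas the paper augments the state with the historical past of the noise to obtain a Markov process on $\R^d\times\mathcal{H}_H$, proves the Feller property, and runs Krylov--Bogoliubov; uniqueness is then obtained by coupling two invariant measures through the same noise (gluing lemma) and applying the Wasserstein contraction. The paper's route has the advantage of supplying a precise \emph{definition} of an invariant measure for this non-Markovian equation, which your proposal never gives; this matters here because the solution is not a continuous function of the driving path when $b$ is singular, which is precisely why the paper avoids the stochastic dynamical systems formalism.

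Two concrete gaps remain. First, part \textbf{(I)} is not addressed: you never construct a solution. The paper builds weak solutions for $\gamma>1/2-1/(2H)$ by mollifying $b$, proving Davie-type a priori H\"older bounds uniform in the mollification, and passing to the limit via tightness and Skorokhod representation; nothing in your proposal replaces this compactness step, and the stability estimate alone only yields uniqueness once existence is known. Second, even granting the pullback construction, ``the pullback limit is unique, hence the invariant measure is unique'' is a non sequitur as stated: uniqueness of the limit of $X_0^{t_0,x}$ over deterministic $x$ shows that \emph{one particular} invariant measure is canonically singled out, but to conclude that an \emph{arbitrary} invariant measure coincides with it you must start the dynamics from that measure (coupled to the noise past), apply the contraction to random, $\mathcal{F}_{t_0}$-measurable initial data with moments controlled by the uniform bound, and let $t_0\to-\infty$. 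This also requires the two-parameter flow (cocycle) property for the singular equation, which you use implicitly when writing $X_0^{t_1,x}=X_0^{t_0,X_{t_0}^{t_1,x}}$ and which itself needs justification when $b$ is a distribution. Both gaps are fillable with the paper's machinery, but they are real omissions rather than routine details.
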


In the absence of one of the drifts, equation \eqref{eq:sde} has been studied extensively in the literature. 
When $F=0$, due to the regularisation effect from the noise, equation \eqref{eq:sde} is well-posed even for non-Lipschitz drift $b$ (i.e. $\gamma<1$). 
In fact, first results for Brownian SDEs can be traced back to Zvonkin \cite{Zvonkin} and Veretennikov \cite{Veretennikov} who considered well-posedness of such SDEs with bounded measurable drifts. Their results were subsequently extended by Krylov and R\"ockner \cite{KrylovRockner} for SDEs with integrable drifts under the so-called subcritical Ladyzhenskaya-Prodi-Serrin condition. 
Weak well-posedness for Brownian SDEs with distributional drifts have been considered in \cite{portenko1990generalized,FRW,BassChen,DelarueDiel,FIR,LeGall,Lejay}. 
SDEs with irregular drifts driven by fractional Brownian motion also received considerable attention, with early contributions by Nualart and Ouknine \cite{NualartOuknine}.  The emergence of new methods such as sewing techniques have ignited renewed interests on such equations, resulting in a host of new results. Starting from  Catellier and Gubinelli in \cite{CatellierGubinelli}, existence and uniqueness of strong solutions for a new class of equations have been established. We refer to \cite{GaleatiGerencser} and the references therein for the most recent advancements. The emerging techniques also allow to identify the probability distribution of the solutions (i.e. weak well-posedness), see \cite{ButkovskyLeMytnik}.

When $b=0$ and $B$ is a Brownian motion, well-posedness and long time behaviour of equation \eqref{eq:sde} are well-understood, see \cite{markovchains} for an exhaustive analysis. For such equations, it is typically expected that the invariant measure exists uniquely and that the convergence of the law of solutions towards the stationary state is exponential (in the total variation norm). When $B$ is a fractional Brownian motion, due to long range correlations, solutions to \eqref{eq:sde} are not  Markov processes. To study the long time behaviour of this class of non-Markovian processes, a prominent general framework called stochastic dynamical system has been introduced by Hairer \cite{Hairer}. Therein, the author established the existence and uniqueness of invariant measures, and the convergence rate of the law of an arbitrary solution towards the stationary state. This framework has been developed further in \cite{hairerohashi,hairer9,hairerpillai} and used for proving estimates on the density of the stationary state (see \cite{li2023non} and the references therein) and for approximating the invariant measure (see \cite{cohen,panloup2020general} and references therein).

Few works consider a general framework that combines both a dissipative drift and a singular drift, with the exception of \cite{bao2024limit} and \cite{haress2024numerical}. In the Ph.D. thesis of the second author (see \cite[Chapter 6]{haress2024numerical}), a uniform-in-time bound on the moments of the solution was established for the stochastic heat equation with a distributional drift and a Lipschitz dissipative drift. This bound was for $L_q$-norms in space of the solutions which was not enough to deduce existence of an invariant measure in that case. In \cite{bao2024limit},  the authors consider the case when $B$ is a Brownian motion, $F$ is locally Lipschitz and weakly dissipative, and $b$ is H\"older continuous. They establish existence and uniqueness of an invariant measure and an exponential contraction in the $1$-Wasserstein distance which helps them establish limit theorems. 
Compared to \cite{bao2024limit}, we work in a general setting where $B$ is a fractional Brownian motion and $b$ can be a distribution (for small $H$). We require stronger assumptions in exchange for a stronger result (decay in moments).

To analyse the invariant measures, it is tempting to adopt the framework of stochastic dynamical systems from \cite{Hairer}. However, putting \eqref{eq:sde} into such general framework demands, in particular, that the solution is a continuous function of the driving noise, which is a stringent property in the presence of the irregular drift.
We therefore adopt a slightly different perspective from previous literatures, relying on an observation from \cite{Hairer} that while \eqref{eq:sde} is non-Markovian, it is still possible to couple the solution with the historical past of the driving noise so that the resulted enhanced process is Markovian. In this way, concepts of invariant measures for Markov processes can be naturally carried forward. 
We find that this point of view offers a direct treatment to the long-time behaviour of \eqref{eq:sde}, without the need for an abstract framework, and thus provides some simplifications over existing literature.

Upon completion of this paper, we are aware of a parallel development \cite{Avi} where the authors also study the long-time behaviour of \eqref{eq:sde}.

\paragraph{Overview of proofs.}
To simplify our discussion and focus only on the key ideas, we will assume that $b$ is a continuous bounded function. To construct weak solutions (part \textbf{(I)}), we consider an approximating equations with smooth drifts and establish, using linear growth condition and stochastic sewing lemma (\cite{le2020stochastic}), Davie-type estimates (\cite{Davie}) which depends only on the corresponding Besov-H\"older norm of $b$. A tightness argument then implies the existence of a weak solution. 
In the case when $F$ is Lipschitz, the stochastic sewing techniques also yield a moment comparison between two solutions, which implies strong uniqueness. 

Concerning part \textbf{(II)}, first, we put \eqref{eq:sde} into the standard framework of Markov processes by considering an enhanced dynamic consisting of  the solution and the historical past of the driving noise. We show that this enhanced dynamic is a Markov process. An invariant measure associated to \eqref{eq:sde} is defined as the invariant measure of the enhanced process starting from a generalised initial condition (a concept we adopted from \cite{Hairer}). 
To establish existence and uniqueness of invariant measures for the enhanced process, we employ  standard approaches which rely on uniform moment estimates.
To this end, we   assume that there exists $\kappa_1 >0$ such that 
\begin{align}\label{eq:strong-dissipativ}
\langle F(x) - F(y) , x-y \rangle \leq - \kappa_1 | x-y |^2  \quad \forall x,y \in \R^d .
\end{align}
To show that the moments of the solution to \eqref{eq:sde} are uniformly bounded in time (thereby showing existence of an invariant measure), we compare \eqref{eq:sde} with the Ornstein-Uhlenbeck (OU) process
\begin{align}\label{eq:U}
d U_t = -U_t dt + dB_t , \quad U_{t_0}=x.
\end{align}
Using \eqref{eq:strong-dissipativ} and linear growth condition, we have that
\begin{align*}
|X_t-U_t|^2 & \leq C \Big( 1+ \int_{t_0}^t e^{-\kappa_1(t-r)} |U_r|^2 dr + \int_{t_0}^t e^{-\kappa_1(t-r)} \langle X_r-U_r, b\left(X_r\right)\rangle dr \Big) .
\end{align*}
Here if $b$ is bounded, it is straightforward to conclude from the above inequality that the moments of $X-U$, and hence moments of $X$, are bounded uniformly in time. However, these estimates can not be extended when $b$ is a distribution. 
To proceed, we utilise the regularisation effect of the noise through stochastic sewing techniques. We show (in Proposition \ref{prop:reg3}) that any (positive) moment of $\int_s^t e^{\kappa_1(t-r)} \langle X_r-U_r, b(X_r) \rangle$ is majored by  moments of $X-U$ with a multiplicative constant $C$ that depends only on the corresponding Besov-H\"older norm of $b$. From here, a Gr\"onwall-type argument shows that the moments of $X-U$, and hence moments of $X$, are bounded uniformly in time.

Similar arguments are applicable to show decaying moment estimates for the difference of two solutions $X,Y$ to \eqref{eq:sde}. Indeed, we have analogously to previous argument that 
\begin{align*}
| X_t-Y_t|^2 \leq e^{-2\kappa_1 (t-t_0)}|X_{t_0}-Y_{t_0}|^2 + \int_{t_0}^t e^{-2\kappa_1 (t-r)}\langle X_r-Y_r, b(X_r)-b(Y_r)  \rangle dr .
\end{align*}
In the case when $b$ is Lipschitz with Lipschitz constant $L_b$, it is straightforward to obtain from the above inequality that 
\begin{align*}
    |X_t-Y_t|^2\leq e^{-2(\kappa_1-L_b)(t-t_0)}|X_{t_0}-Y_{t_0}|^2.
\end{align*}
This shows that if $L_b<\kappa_1$, then the moments of $X_t-Y_t$ decay exponentially. To obtain similar result when $b$ is in the Besov-H\"older scales, we utilise the regularisation effect of the noise once again. The idea is that stochastic sewing techniques allow us to formally say that the map $x \mapsto \int b(B+x)$ is Lipschitz in $x$  with Lipschitz constant $\tilde L$ proportional to the corresponding Besov-H\"older norm of $b$. Hence, we can perform an additional Gr\"onwall-type argument to conclude that the moments of $| X_t-Y_t|^2$ grows exponentially with rate $-c(\kappa_1-\tilde L)$  
for some constant $c>0$. If  $\tilde{L}$ is small enough, this implies exponential decay, and thereby showing that \eqref{eq:sde} has a unique invariant measure. 

Note that the positivity of the constant $\kappa_1$ in \eqref{eq:strong-dissipativ} only matters for long-time analysis. Hence, in some intermediary results, e.g. existence of weak solutions for instance, we merely assume $\kappa_1$ to be a real number. Moreover, the usual stability condition on $F$ involves an additional constant $\kappa_2 \ge 0$ in the right-hand side of \eqref{eq:strong-dissipativ} (e.g. \cite{Hairer}), hence some intermediary results (existence of weak solutions and the uniform bound on the moments) will also involve this constant.

The arguments presented above are of course over-simplified. Additional cares are taken into account when $b$ is merely a distribution. This involves working with a proper concept of solutions to \eqref{eq:sde}  and controlling H\"older norms resulted from applying sewing techniques. 

\paragraph{Applications.}

Our results and proofs can be useful in two main directions.

\emph{Long-time numerical approximation.}
Numerical schemes for \eqref{eq:sde} with $F=0$ have been studied for example by \cite{haress2024numerical} in finite time, but understanding their long-time behaviour in the presence of $F$ and for $\gamma <0$ remains largely open. 
Our analysis of the invariant measure and exponential moment decay provides a natural theoretical foundation for studying the ergodic properties of discrete-time approximations. 
In particular, if $(b^n)_{n \in \N}$ is a smooth sequence that converges to $b$ in the Besov-H\"older space of regularity $\gamma$, one might consider (as in \cite{haress2024numerical}) the following tamed scheme for $h \in (0,1)$,
\begin{align}\label{eq:scheme}
    X^{n,h}_t=x+\int_{t_0}^t F(X^{n,h}_{r_h})dr+\int_{t_0}^t b(X^{n,h}_{r_h})dr+B_t-B_{t_0}, \, t \ge t_0,
\end{align}
where $r_h=h \lfloor r/h \rfloor$. Analysing the long-time properties of $X^{n,h}$ would then require proving new regularisation properties of the discretised fBm that are similar to the ones presented in this paper for the fBm. The goal here is to obtain a uniform-in-time bound on the error between $X^{n,h}$ and $X$. This would also lead to an approximation of the invariant measure, and extend to the singular regime the long-time approximation results known for smooth $b$.

\emph{Parameter estimation from invariant measures.}
A second natural application concerns statistical inference.
Since the invariant measure characterises the long-time law of the process, it can be used to estimate a model parameter $\theta_0$, for instance a coefficient appearing in the drift term $(F+b)$.
Given long-time discrete observations $(X_{t_k})_{k=0,\cdots,n}$, one can define an empirical random measure $\mu_N := \frac{1}{N}\sum_0^N \delta_{X_{t_k}}$ and infer the unknown parameter $\theta_0$ by minimising a suitable distance between $\mu_N$ and the theoretical invariant measure $\mu_\theta$ (where $\theta$ denotes a general parameter). Moreover, in cases where the invariant measure is unknown, $\mu_\theta$ can be approximated using the scheme \eqref{eq:scheme} enabling a fully implementable estimation procedure. We refer to \cite{panloup2020general} and references therein for the case $b=0$ but the general case still needs to be developed and would require ergodic results which our paper provides a rigorous basis for.

\paragraph{Organisation of the paper.} In Section \ref{sec:notation}, we introduce notation used throughout the paper, give the definition of a solution and state the main assumptions on $F$ and $b$.

In Section \ref{sec:well-posedness}, we study the well-posedness of the SDE \eqref{eq:sde}. We start by stating the main results, and in Section \ref{sec:regprop} we recall and prove regularisation properties of the fractional Brownian motion (fBm). In Section \ref{sec:apriori}, we establish a priori estimates for solutions. First, we derive regularity estimates for solutions with smooth drift $b$, then we prove general stability results (with respect to the initial condition and the drift) for solutions that satisfy some given regularity. In Section \ref{sec:weakexistence}, we prove existence of weak solutions. In Section \ref{sec:strongexistence}, we prove existence of solutions and uniqueness in a class of H\"older continuous processes and describe how the difference of two solutions started from different initial conditions evolves in moments.

In Section \ref{sec:longtime}, we study the existence and uniqueness of invariant measures associated to \eqref{eq:sde}. We consider an SDE where the history of the fBm is fixed to be equal to an $\mathcal{F}_{t_0}$ deterministic path and use it to define a Markov evolution which consists of coupling the solution with the evolution of the history of the fBm. We state the main results which include well-posedness of the aforementioned SDE, a proof of the Markov property, and existence and uniqueness of the invariant measure. The last two results are proved respectively in Section \ref{subsec:link-SDEs} and Section \ref{sec:inv}.

In Appendix \ref{app:A}, we gather a useful lemma on the regularisation properties of the fBm, a version of the stochastic sewing lemma, and a disintegration result.

\section{Notation, definitions and assumptions}\label{sec:notation}
In this section, we introduce some notation used throughout the paper, define a solution to the equation \eqref{eq:sde} and give the assumptions on $F$ and $b$.

Let $(\Omega,\mathcal{F},\mathbb{F} = (\mathcal{F}_{t})_{t\in \R},\mathbb{P})$ be a filtered probability space which satisfies the usual conditions. The conditional expectation given $\mathcal{F}_{t}$ is denoted by $\mathbb{E}^{t}$ when there is no risk of confusion on the underlying filtration. The law of a random variable $X$ is denoted by $\mathcal{L}(X)$. For any $m \in [1,\infty]$, the $L_m(\Omega)$ norm of $X$ is denoted by $\|X\|_{L_m}$ and the space $L_m(\Omega)$ is simply denoted by $L_m$.

A  process $(W_t)_{t \in \R}$ is a standard $\mathbb{F}$-Wiener process if $W_0=0$, $W$ has almost surely continuous paths, $W$ is adapted to $\mathbb{F}$ and for all $s < t$, the increment $W_t-W_s$ is independent of $\mathcal{F}_s$ and $W_t-W_s \sim \mathcal{N}(0,(t-s)I_d)$.

We use the Mandelbrot and Van Ness representation of the fractional Brownian motion as an integral with respect to an $\mathbb{F}$-Wiener process $W$, that is
\begin{align}\label{eq:MVN}
    B_{t} = \alpha_H \int_{-\infty}^{0} \Big((t-u)^{H-\frac{1}{2}}-(-u)^{H-\frac{1}{2}}\Big) d W_u+\alpha_H \int_{0}^{t}(t-u)^{H-\frac{1}{2}} d W_u . 
\end{align}
In particular, there is a normalisation constant $\alpha_H$ such that for any $t_0 \in \R$ and $t \ge t_0$,
\begin{align}
B_{t}-B_{t_0} & =\alpha_H \int_{-\infty}^{t_0} \Big((t-u)^{H-\frac{1}{2}}-(t_0-u)^{H-\frac{1}{2}}\Big) d W_u+\alpha_H \int_{t_0}^{t}(t-u)^{H-\frac{1}{2}} d W_u \nonumber \\
& =: \bar{B}_{t}^{t_0}+\widetilde{B}_{t}^{t_0}. \label{eq:decomp-fBm}
\end{align}
The processes 
$\bar{B}^{t_0},\widetilde{B}^{t_0}$ are respectively called the history and the innovation at time $t_0$. 

Let $f\colon\R^d\to \R^d$ be a Borel-measurable function. We denote the supremum norm of $f$ by 
$\|f \|_{\infty} = \sup_{x \in \R^d} |f(x)|$.
For $\alpha \in (0,1]$, we denote by $\mathcal{C}^\alpha$ the space of bounded $\alpha$-H\"older continuous functions and denote the $\mathcal{C}^{\alpha}$ norm of $f$ by $\|f\|_{\mathcal{C}^\alpha} = \| f \|_\infty + \sup_{x \neq y} \frac{|f(x)-f(y)|}{|x-y|^\alpha}$. Moreover, for any interval $I$, we denote by $\mathcal{C}^\alpha_{I}$ the space of H\"older continuous functions $f: I \rightarrow \R^d$. For $\alpha < 0$, we say that a Schwartz-distribution $f$ is of class $\mathcal{C}^{\alpha}$ if
\begin{align}\label{eq:alpha-norm}
\| f \|_{\mathcal{C}^\alpha} := \sup_{\varepsilon \in (0,1]} \varepsilon^{-\alpha/2} \| P_\varepsilon f \|_{{\infty}} < +\infty ,
\end{align}
where $P_\varepsilon$ is the Gaussian semigroup defined by $P_{\varepsilon} f(x) := \int_{\R^d} p_{\varepsilon}(x-y) f(y) dy$ and $p_{\varepsilon}(x)=\frac{1}{(4\pi \varepsilon )^{d/2}} \exp\big(-\frac{|x|^2}{4\varepsilon} \big)$. We write $\mathcal{C}^0$ for the space of bounded measurable functions. 
We also write $\mathcal{C}^\infty$ for the space of bounded smooth functions whose  derivatives (of all orders) are also bounded. Finally, for any $\alpha \in (-\infty,1]$, we denote by $\mathcal{C}^{\alpha+}$ the closure of $\mathcal{C}^\infty$ in $\mathcal C^{\alpha}$.  

For an interval $I$ of $\R$,  we denote by $\mathcal{C}(I,\R^d)$ the space of continuous functions $f:I\rightarrow \R^d$ equipped with the metric
\begin{align}\label{eq:metric}
    d(f,g) = \sum_{k} 2^{-k} \Big( 1 \wedge \sup_{r \in [-2^k,2^k] \cap I} | f(r)-g(r)| \Big),
\end{align}
and define the simplex $\Delta_{I} = \{ (s,t) \in I^2: s < t \}$. Moreover, for any $m \in [2,\infty)$ and $q \in [2,\infty]$,
we introduce the following notation for any random variable $X$ 
\begin{align}\label{def:cond-norm}
\| X \|_{L_{m,q}^{\mathcal{F}_s}} = \| \left( \EE^s |X|^m \right)^{1/m} \|_{L_q} .
\end{align}
When $m=q$, such quantity deduces to the usual $L_m$ norm.
Let $\phi:I \times \Omega \rightarrow \mathbb{R}$, we introduce the following semi-norms:
\begin{align}
[\phi]_{\mathcal{C}_{I}^{\alpha} L_{m,q}}=\sup _{\substack{(s, r) \in \Delta_{I}}} \frac{\|\phi_r-\phi_s\|_{L_{m,q}^{\mathcal{F}_s}} }{|r-s|^\alpha}, \label{eq:semi-norm} \\
\llbracket \phi \rrbracket_{\mathcal{C}_{I}^{\alpha} L_{m,q}}=\sup _{\substack{(s, r) \in \Delta_{I}}} \frac{\|\phi_r-\EE^s \phi_r\|_{L_{m,q}^{\mathcal{F}_s}} }{|r-s|^\alpha} \label{eq:semi-norm-2}.
\end{align}
Here we have omitted the dependence on the probability space $(\Omega,\mathcal{F},\mathbb{F},\mathbb{P)}$ in the notation, which will be clear from the context. 
When $m=q$, we simply write $[\phi]_{\mathcal{C}_{I}^{\alpha} L_{m}} := [\phi]_{\mathcal{C}_{I}^{\alpha} L_{m,m}}$ and $\llbracket \phi \rrbracket_{\mathcal{C}_{I}^{\alpha} L_{m}} := \llbracket \phi \rrbracket_{\mathcal{C}_{I}^{\alpha} L_{m,m}}$. 

Notice that by the triangle inequality one always has 
$$(\EE^s|\phi_r-\EE^s\phi_r|^m )^{1/m} \leq (\EE^s|\phi_r-\phi_s|^m)^{1/m} + (\EE^s|\EE^s(\phi_s-\phi_r)|^m)^{1/m},$$
which implies that
\begin{align}\label{eq:compar-norms}
\llbracket \phi \rrbracket_{\mathcal{C}_{I}^{\alpha} L_{m,q}} \leq 2[\phi]_{\mathcal{C}_{I}^{\alpha} L_{m,q}}.
\end{align} 

Let $(E,d)$ be a metric Polish space. We write $\mathcal{M}_1(E)$ for the set of probability measures on $E$. 
For $m \in (1,\infty)$, we will consider the $m$-Wasserstein distance, which is defined for every $\mu,\nu$ in $\mathcal{M}_1(E)$ as follows
\begin{align}\label{eq:wasserstein}
\mathcal{W}_m(\mu,\nu) = \inf\{ (\EE d(X,Y)^m)^{1/m}\},
\end{align}
where the infimum is taken over all random variables $X,Y$ defined on a common probability space such that $(X,Y)$ is a coupling of $\mu$ and $\nu$.

~

Finally, we denote by $C$ a constant that can change from line to line and that does not depend on any parameter other than those specified in the associated lemma, proposition or theorem. When we want to make the dependence of $C$ on some parameter $a$ explicit, we will write $C(a)$.

~

We give a meaning to equation \eqref{eq:sde} with distributional drift by approximating the drift by a smooth sequence.

\begin{definition}\label{defsol-SDE}
Let $(\Omega,\mathcal{F},\mathbb{F},\mathbb{P})$ be a filtered probability space, and let $W$ be an $\mathbb{F}$-Wiener process.
Let $x \in \R^d$, $t_0 \in \R$, $T > t_0$, $\gamma \in \mathbb{R}$, $b \in \mathcal{C}^{\gamma+}$. We say that $X$ is a solution to \eqref{eq:sde} with respect to $(\Omega,\mathcal{F},\mathbb{F},\mathbb{P})$ and $W$ on $[t_0,T]$ if $X$ is adapted to the filtration $\mathbb{F}$ and
\begin{enumerate}
\item there exists an $\R^d$-valued $\mathbb{F}$-adapted process $(K_t)_{t \in [t_0,T]}$ such that, a.s., for all $t \in [t_0,T]$
\begin{equation}\label{solution1}
X_t=x+\int_{t_0}^t F(X_r) dr + K_t+B_t-B_{t_0}  \text{,}
\end{equation}
where $B$ is a fractional Brownian motion given by $W$ via the formula \eqref{eq:MVN}\text{;}

\item for every sequence $(b^k)_{k\in \mathbb{N}}$ of smooth bounded functions converging to $b$ in $\mathcal{C}^{\gamma}$, we have
\begin{equation}\label{approximation2}
       \sup_{t\in [t_0,T]}\left|\int_{t_0}^t b^k(X_r) dr 
       -K_t\right|   \underset{k\rightarrow \infty}{\longrightarrow} 0 ~\text{ in probability}.
\end{equation}
\end{enumerate}
\end{definition}

A typical example of such sequence is $(b^k)=(P_{1/k}b)$. In the paper, we work under the Catellier-Gubinelli condition $\gamma> 1-1/(2H)$, so as in the definition above, we will assume without any loss of generality that $b \in \mathcal{C}^{\gamma+}$. In fact, if $b$ is in $\mathcal{C}^\gamma$ then, for any $\varepsilon>0$, $b$ is in $\mathcal{C}^{(\gamma-\varepsilon)+}$. Therefore, we make the following assumption on $b$:
\begin{assumption}\label{assumpb}
$b$ belongs to the space $\mathcal C^{\gamma+}$ and there exists a positive constant $\Xi$ such that
\begin{align*}
    \| b \|_{\mathcal{C}^\gamma} \leq \Xi .
\end{align*}
\end{assumption}

The general stability and growth assumptions on the dissipative drift $F$ will be the following:

\begin{assumption}\label{assumpF}
$F: \R^d \rightarrow \R^d$ is a continuous function and the following hold:
\begin{enumerate}[label=(\roman*)]
\item There exists $\kappa_1 \in \R$ and $\kappa_2 \ge 0$, such that
\begin{align}\label{eq:Fdissipative}
\langle F(x) - F(y) , x-y \rangle \leq - \kappa_1 | x-y |^2 + \kappa_2  \quad \forall x,y \in \R^d .
\end{align}
\item There exists a constant $\kappa_3>0$, such that $F$ satisfies
\begin{align}\label{eq:growth}
| F(x) | \leq \kappa_3(1+|x|) .
\end{align}
\end{enumerate}
We write $\kappa := (\kappa_1,\kappa_2,\kappa_3)$.
\end{assumption}

\begin{assumption}\label{Flip}
$F: \R^d \rightarrow \R^d$ is a Lipschitz continuous function with Lipschitz constant $L_F$.  
\end{assumption}
Notice that Assumption \ref{Flip} implies Assumption \ref{assumpF} with $\kappa_1=-L_F$, $\kappa_2=0$ and $\kappa_3=|F(0)|+L_F$. We will make  use of both assumptions in the paper to distinguish between regularity assumptions and stability assumptions. In particular, for the long-time analysis, we will assume $\kappa_1>0$, in which case \eqref{eq:Fdissipative} does not follow from Assumption \ref{Flip}.

\section{Well-posedness and regularity of the solution}\label{sec:well-posedness}
In this section, we deal with the well-posedness of the SDE \eqref{eq:sde} and the regularity of the solutions. 

\begin{theorem}\label{thmmain-existence} 
Let $\gamma \in (1/2-1/(2H),1)$. Assume that $F,b$ satisfy Assumptions \ref{assumpF}, \ref{assumpb}, respectively.
Let $x \in \R^d$, $T \in \R$ and $t_0 < T$. Then there exists a filtered probability space $(\Omega,\mathcal{F},\mathbb{F},\mathbb{P})$ and an $\mathbb{F}$-Wiener process $W$ such that there exists a solution $(X_t)_{t \in [t_0,T]}$ to \eqref{eq:sde} with respect to $(\Omega,\mathcal{F},\mathbb{F},\mathbb{P})$ and $W$. \\
 
Moreover, for any $m \in [2,\infty)$, there exist constants $C_1:=C_1(m,\gamma,H,x,\kappa,\Xi)$ and $C_2:=C_2(m,\gamma,H,\kappa,\Xi)$ such that for any $t_0 \leq s <t$ with $t-s \leq 1$, $X$ satisfies 
\begin{align}
 \sup_{r \in [t_0,T]} \| X_r  \|_{L_m} & \leq C_1 \Big(1+ (T-t_0) \wedge \frac{1-e^{-\kappa_1(T-t_0)} }{\kappa_1} \Big)  \label{eq:thm-existence-1},\\
  [X-B]_{\mathcal{C}^{1+H(\gamma \wedge 0)}_{[s,t]} L_m} & \leq C_2 \big( 1+\sup_{r \in [s,t]} \|X_r\|_{L_m} \big)\label{eq:thm-existence-2} .
 \end{align}
Finally, if $F$ satisfies Assumption \ref{Flip}, then there exists a constant $C:=C(m,\gamma,H, L_F)$ such that for any $t_0 \leq s <t$ with $t-s \leq 1$, $X$ also satisfies
\begin{align}\label{eq:thm-existence-3}
\llbracket X-B \rrbracket_{\mathcal{C}^{1+H(\gamma \wedge 0)}_{[s,t]} L_{m,\infty}} \leq C .
\end{align}
 \end{theorem}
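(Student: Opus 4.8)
The plan is to build the weak solution by a mollification-and-compactness scheme, to prove the a priori estimates \eqref{eq:thm-existence-1}--\eqref{eq:thm-existence-2} with constants uniform along the regularisation, and then to pass to the limit via tightness and the Skorokhod representation; the conditional bound \eqref{eq:thm-existence-3} is extracted at the very end from the maximal version of the stochastic sewing lemma under Assumption \ref{Flip}. Concretely, I would take the smooth bounded approximations $b^k := P_{1/k}b$, which converge to $b$ in $\C^\gamma$ with $\sup_k \|b^k\|_{\C^\gamma}\le C\Xi$, and solve the classical SDEs $dX^k_t = F(X^k_t)\,dt + b^k(X^k_t)\,dt + dB_t$, $X^k_{t_0}=x$, on the fixed space carrying $W$ (hence $B$ via \eqref{eq:MVN}). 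Global existence of each $X^k$ with finite moments follows from the continuity and linear growth \eqref{eq:growth} of $F$ and the boundedness of $b^k$. The crux is that the two bounds below hold with constants independent of $k$; note in particular that $\|b^k\|_\infty\sim \|b\|_{\C^\gamma}k^{-\gamma/2}$ blows up when $\gamma<0$, so the singular drift can never be treated by its supremum norm and must be controlled through the regularisation effect of the noise.

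For \eqref{eq:thm-existence-1} I would compare $X^k$ with the Ornstein--Uhlenbeck process $U$ of \eqref{eq:U}, driven by the same $B$. The difference $Z^k:=X^k-U$ solves the pathwise ODE $\dot Z^k_t = F(X^k_t)+b^k(X^k_t)+U_t$, so $\tfrac12\tfrac{d}{dt}|Z^k_t|^2 = \langle Z^k_t,\,F(X^k_t)+U_t\rangle + \langle Z^k_t,\,b^k(X^k_t)\rangle$. Writing $\langle Z^k_t,F(X^k_t)\rangle \le -\kappa_1|Z^k_t|^2 + \kappa_2 + \langle Z^k_t,F(U_t)\rangle$ via \eqref{eq:Fdissipative} and using \eqref{eq:growth} together with Young's inequality absorbs all regular terms; integrating the resulting differential inequality produces the weight $e^{-\kappa_1(t-r)}$ and hence the factor $(T-t_0)\wedge\frac{1-e^{-\kappa_1(T-t_0)}}{\kappa_1}$. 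The one dangerous contribution is $\int_s^t e^{-\kappa_1(t-r)}\langle Z^k_r,b^k(X^k_r)\rangle\,dr$, whose positive moments I would bound by moments of $Z^k$ with a multiplicative constant depending on $b^k$ only through $\|b^k\|_{\C^\gamma}\le C\Xi$, exactly as in the regularisation estimate of Proposition \ref{prop:reg3}. A Gr\"onwall argument in the moments then closes \eqref{eq:thm-existence-1} uniformly in $k$.

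For \eqref{eq:thm-existence-2} I would write $(X^k-B)_t-(X^k-B)_s = \int_s^t F(X^k_r)\,dr + \int_s^t b^k(X^k_r)\,dr$; the first integral is Lipschitz in time and contributes $C|t-s|(1+\sup_r\|X^k_r\|_{L_m})$ through \eqref{eq:growth}. The singular integral is handled by the stochastic sewing lemma of \cite{le2020stochastic} applied to the germ $A_{s,t}:=\int_s^t \EE^s\big[b^k(X^k_s + B_r-B_s)\big]\,dr$. The mechanism is that, conditionally on $\mathcal F_s$, the increment $B_r-B_s$ decomposes into the $\mathcal F_s$-measurable history $\bar B^s_r$ plus the innovation $\widetilde B^s_r$ of \eqref{eq:decomp-fBm}, which is Gaussian with variance comparable to $(r-s)^{2H}$; hence $\EE^s b^k(X^k_s+B_r-B_s)$ is a Gaussian regularisation of $b^k$ at scale $(r-s)^{2H}$ and, for $\gamma<0$, is bounded by $\|b^k\|_{\C^\gamma}(r-s)^{H\gamma}$. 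Integrating in $r$ yields the exponent $1+H(\gamma\wedge 0)$, which lies in $(1/2,1)$ precisely under $\gamma>1/2-1/(2H)$, so that the consistency defect $\delta A_{s,u,t}$ meets the hypotheses of the sewing lemma; this defect involves the increments of $X^k$ and is responsible for the self-referential factor $1+\sup_r\|X^k_r\|_{L_m}$ on the right-hand side. This delivers \eqref{eq:thm-existence-2} with a constant independent of $k$.

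With \eqref{eq:thm-existence-1}--\eqref{eq:thm-existence-2} uniform in $k$, Kolmogorov's continuity criterion and the uniform bounds make $\{X^k\}$ tight in $\C([t_0,T];\Rd)$; along a subsequence $(X^k,W)$ converges in law, and by Skorokhod I realise this almost surely on a new space, pass to the limit in $\int_{t_0}^\cdot F(X^k_r)\,dr$ by continuity and uniform integrability, and identify the limit $K$ of $\int_{t_0}^\cdot b^k(X^k_r)\,dr$ using the stability-in-the-drift estimates of Section \ref{sec:apriori}, which also verify property (2) of Definition \ref{defsol-SDE} for an arbitrary approximating sequence; the bounds \eqref{eq:thm-existence-1}--\eqref{eq:thm-existence-2} survive by lower semicontinuity. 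Under Assumption \ref{Flip}, the centred quantity $(X-B)_r-\EE^s(X-B)_r$ loses its germ contribution (since $A_{s,r}$ is $\mathcal F_s$-measurable) and reduces to the conditional fluctuation of the $F$-part and of the sewing remainder, which the \emph{maximal} (second) bound of the stochastic sewing lemma controls in the $L_{m,\infty}$ norm, giving \eqref{eq:thm-existence-3}. The main obstacle throughout is the singular integral $\int_s^t b^k(X^k_r)\,dr$---extracting its sharp regularity $1+H(\gamma\wedge 0)$ with a constant depending on $b$ only through $\|b\|_{\C^\gamma}$, closing the self-reference on $\sup_r\|X^k_r\|_{L_m}$, and identifying its distributional limit---all of which rests on the stochastic sewing lemma and the Gaussian smoothing of the fBm innovation.
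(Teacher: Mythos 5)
Your proposal follows essentially the same route as the paper: mollify $b$, prove the uniform moment bound by comparison with the Ornstein--Uhlenbeck process combined with a sewing-based regularisation estimate for $\int e^{-\kappa_1(t-r)}\langle X_r-U_r, b(X_r)\rangle\,dr$ (in the paper this is Proposition \ref{prop:reg4}; Proposition \ref{prop:reg3} is the variant for the difference of two solutions), obtain the H\"older bound \eqref{eq:thm-existence-2} via the stochastic sewing lemma with a buckling argument on $[X-B]_{\mathcal{C}^{1+H(\gamma\wedge 0)}L_m}$, and conclude by tightness and the Skorokhod representation, with \eqref{eq:thm-existence-3} coming from the conditional ($q=\infty$) version of the sewing estimate under the Lipschitz assumption on $F$. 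The argument is correct and matches the paper's proof in all essential respects.
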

 If $\gamma \in [0,1)$, then $b$ is a bounded function and the above results are known with $F$ having a polynomial growth instead of a linear one. In fact one can check that \eqref{eq:thm-existence-2} and \eqref{eq:thm-existence-3} always hold and \eqref{eq:thm-existence-1} follows from a comparison with the OU process (following the arguments presented in the paragraph ``Overview of some key estimates" in the introduction). However, we still include the case $\gamma \in [0,1)$ in Theorem \ref{thmmain-existence} as it will be relevant in the next results.

If $\kappa_1>0$, then \eqref{eq:thm-existence-1} becomes a uniform-in-time bound on the moments which will be useful in proving existence of an invariant measure. Moreover, if $\kappa_1 >0$, then combining \eqref{eq:thm-existence-1} and \eqref{eq:thm-existence-2}, one obtains a uniform H\"older regularity over small intervals.

The proof of Theorem \ref{thmmain-existence} is established in Section \ref{sec:weakexistence}. In the second theorem, we state existence and uniqueness of solutions and we describe how the difference of any two solutions behaves in the long-time.

\begin{theorem}\label{thmmain-uniqueness}
Let $(\Omega,\mathcal{F},\mathbb{F},\mathbb{P})$ be a filtered probability space and let $W$ be an $\mathbb{F}$-Wiener process. Let $B$ be a fractional Brownian motion given by \eqref{eq:MVN}.
Let $\gamma \in (1-1/(2H),1)$. Assume that $F$ satisfies Assumption \ref{Flip} and $b$ satisfies Assumption \ref{assumpb}.
Let $x \in \R^d$, $t_0 \in \R$ and $T > t_0$, then there exists a unique solution $(X_t)_{t \in [t_0,T]}$ to \eqref{eq:sde}, with respect to $(\Omega,\mathcal{F},\mathbb{F},\mathbb{P})$ and $W$, in the class: 
 \begin{align}\label{eq:class-uniqueness}
 \mathcal{V} = \{ Y: \,  [ Y-B ]_{\mathcal{C}^{1/2}_{[t_0,T]} L_2} < \infty \}.
 \end{align}
Moreover, $X$ satisfies \eqref{eq:thm-existence-2} and \eqref{eq:thm-existence-3}. Furthermore, let assume that $F$ satisfies Assumption \ref{assumpF}, then $X$ also satisfies \eqref{eq:thm-existence-1}, and if $\kappa_2=0$, then for any $m \in [2,\infty)$, there exists a universal constant $C$ and a constant $\mathbf{M} := \mathbf{M}(m,\gamma,H, L_F,\Xi)$ such that for 
\begin{align}\label{eq:defbeta}
\beta := \beta(m,\|b \|_{\mathcal{C}^{\gamma}} ) := -\kappa_1+\mathbf{M}(m,\gamma,H, L_F,\Xi)\|b \|_{\mathcal{C}^{\gamma}} ,
\end{align}
and any solution $Y$ in the class $\mathcal{V}$ with initial condition $y$, we have
\begin{align}\label{eq:exp-decay}
\| X_t-Y_t\|_{L_m}\leq C |x-y| e^{\beta (t-t_0)} ,\, \forall t \ge t_0 .
\end{align}
\end{theorem}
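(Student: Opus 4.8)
The plan is to split the statement into three tasks: strong well-posedness in $\mathcal{V}$, the a priori regularity \eqref{eq:thm-existence-2}--\eqref{eq:thm-existence-3}, and the contraction estimate \eqref{eq:exp-decay}. The last is the heart of the matter, and the other claims either follow from Theorem \ref{thmmain-existence} or are corollaries of the contraction. Since $(1-1/(2H),1)\subset(1/2-1/(2H),1)$ and Assumption \ref{Flip} implies Assumption \ref{assumpF}, Theorem \ref{thmmain-existence} supplies a weak solution whose regularity \eqref{eq:thm-existence-2} — of exponent $1+H(\gamma\wedge0)>1/2$ (one checks $1+H\gamma>1/2+H>1/2$ when $\gamma>1-1/(2H)$, and the exponent equals $1$ when $\gamma\ge0$) — places it in $\mathcal{V}$ after chaining over $[t_0,T]$. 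I would then invoke a Yamada--Watanabe-type argument: weak existence together with pathwise uniqueness in $\mathcal{V}$ upgrades this to a unique strong solution on the prescribed space with the prescribed $W$. Pathwise uniqueness is itself the special case of the comparison estimate below obtained by taking $\kappa_1=-L_F$ (the dissipativity constant furnished by Assumption \ref{Flip}): the same argument yields $\|X_t-Y_t\|_{L_m}\le|x-y|e^{C(t-t_0)}$, which forces $X=Y$ whenever $x=y$. The claims that $X$ satisfies \eqref{eq:thm-existence-2}, \eqref{eq:thm-existence-3}, and, under Assumption \ref{assumpF}$(i)$, \eqref{eq:thm-existence-1}, are inherited from Theorem \ref{thmmain-existence} and the a priori estimates of Section \ref{sec:apriori}, which show that any $\mathcal{V}$-solution automatically enjoys the improved regularity.

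\textbf{The contraction estimate.} Writing $D=X-Y$, the fractional Brownian contributions in \eqref{solution1} cancel, so $D=(X-B)-(Y-B)\in\mathcal{C}^{1+H(\gamma\wedge0)}$ is H\"older of exponent $>1/2$, and the chain rule holds in the Young sense:
\[
e^{2\kappa_1 t}|D_t|^2 = e^{2\kappa_1 t_0}|x-y|^2 + 2\int_{t_0}^t e^{2\kappa_1 r}\big(\kappa_1|D_r|^2+\langle D_r,F(X_r)-F(Y_r)\rangle\big)\,dr + 2\,\mathcal{J}_{t_0,t},
\]
where $\mathcal{J}_{s,t}=\int_s^t e^{2\kappa_1 r}\langle D_r, d(K^X-K^Y)_r\rangle$ is the singular cross term and $K^X,K^Y$ are the singular drifts of $X,Y$. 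By Assumption \ref{assumpF}$(i)$ with $\kappa_2=0$, the dissipativity \eqref{eq:Fdissipative} gives $\kappa_1|D_r|^2+\langle D_r,F(X_r)-F(Y_r)\rangle\le0$, so the Lebesgue integral is nonpositive and $e^{2\kappa_1 t}|D_t|^2\le e^{2\kappa_1 t_0}|x-y|^2+2\mathcal{J}_{t_0,t}$.

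\textbf{Controlling the cross term.} It remains to estimate $\mathcal{J}$, and here I would use the stochastic sewing/regularisation machinery in the same spirit as Proposition \ref{prop:reg3}, adapted to the difference of two solutions: freezing $D$ at the left endpoint, the germ $\EE^s\langle D_s,\int_s^t e^{2\kappa_1 r}(b(X_r)-b(Y_r))\,dr\rangle$ is estimated through the ``Lipschitz-in-$x$'' property of $x\mapsto\int b(B+x)$, whose constant is proportional to $\|b\|_{\mathcal{C}^\gamma}$, while the sewing remainder carries a strictly super-linear power of $(t-s)$ and vanishes in the limit. This yields a conditional-moment bound of the form $\mathbf{M}\|b\|_{\mathcal{C}^\gamma}\int_s^t e^{2\kappa_1 r}\|D_r\|^2$ in the norms \eqref{def:cond-norm}. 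A Gr\"onwall argument applied to $t\mapsto\|D_t\|_{L_m}^2 e^{2\kappa_1 t}$ then gives
\[
\|D_t\|_{L_m}^2\le |x-y|^2 e^{-2\kappa_1(t-t_0)}e^{2\mathbf{M}\|b\|_{\mathcal{C}^\gamma}(t-t_0)},
\]
which, on taking square roots, is exactly \eqref{eq:exp-decay} with $\beta$ as in \eqref{eq:defbeta}.

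\textbf{Main obstacle.} The crux is the regularisation estimate for $\mathcal{J}$: one must show, via stochastic sewing, that the integrated difference $b(X)-b(Y)$ is Lipschitz in $D=X-Y$ in the conditional $L_{m,q}$-norms, with a constant $\mathbf{M}$ uniform in time and in the interval length. Uniformity in time is indispensable for the long-time conclusion and should follow from the stationarity of the increments of $B$ combined with the exponential weight $e^{2\kappa_1 r}$; the delicate point is to arrange that the leading term of the germ scales like $\int_s^t\|D_r\|^2$, so that Gr\"onwall closes, while the remainder is genuinely higher order. I would first establish the estimate on unit intervals $t-s\le1$, consistently with \eqref{eq:thm-existence-2}--\eqref{eq:thm-existence-3}, and then iterate over a partition of $[t_0,T]$ to reach the global-in-time bound.
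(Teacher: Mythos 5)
Your treatment of the contraction estimate \eqref{eq:exp-decay} is essentially the paper's argument: the paper's Proposition \ref{prop:stab-lip} runs exactly your computation (chain rule for $e^{2\kappa_1 t}|D_t|^2$, dissipativity killing the $F$-term, stochastic sewing for the cross term via Proposition \ref{prop:reg3} on subintervals of length at most one, control of $[X-Y]_{\mathcal{C}^{1/2}}$ by $\sup_r\|X_r-Y_r\|_{L_m}$ via Lemma \ref{prop:regZ-W-lip}, then a discrete Gr\"onwall iteration). The only cosmetic difference is that the paper first reduces to smooth $b$ at the start of Proposition \ref{prop:stab-lip} (using the drift-stability Proposition \ref{prop:stab-drift} to pass to the limit afterwards), so it never has to interpret $\langle D_r, d(K^X-K^Y)_r\rangle$ as a Young integral against the singular drift component of a general $\mathcal{V}$-solution; if you work directly with $\mathcal{J}_{s,t}$ you still need this approximation step to identify $d(K^X-K^Y)$ with $(b(X)-b(Y))\,dr$ before the sewing estimate applies, and you should say so.

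The genuine gap is in the existence part. You propose weak existence plus pathwise uniqueness plus ``a Yamada--Watanabe-type argument'' to produce a strong solution on the prescribed space with the prescribed $W$. No such theorem is available off the shelf for this solution concept: Definition \ref{defsol-SDE} defines a solution through the auxiliary process $K$ obtained as a limit in probability of $\int b^k(X)$, and uniqueness is only asserted within the class $\mathcal{V}$, which is a \emph{moment} condition on the pair $(Y,B)$ rather than a pathwise property. The classical Yamada--Watanabe machinery transfers pathwise uniqueness between probability spaces via regular conditional distributions on canonical path space, and it is not clear (and certainly not free) that a restricted uniqueness statement of the form ``any two solutions both lying in $\mathcal{V}$ coincide'' survives this transfer, nor that the law of the pair $(X,K,W)$ is determined well enough to run the argument. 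The paper avoids the issue entirely: it constructs the strong solution directly on the given $(\Omega,\mathcal{F},\mathbb{F},\mathbb{P})$ by solving with mollified drifts $b^n$, using Proposition \ref{prop:stab-drift} to show $(X^n)$ is Cauchy in $L_m(\Omega;\mathcal{C}([t_0,T]))$, and then verifying that the limit satisfies \eqref{solution1}--\eqref{approximation2} and inherits \eqref{eq:thm-existence-1}--\eqref{eq:thm-existence-3}. You already have all the ingredients for this route (your contraction estimate applied with two different drifts is precisely the needed Cauchy property), so the fix is to replace the Yamada--Watanabe step by this direct compactness-free construction; as written, the existence claim is not justified.
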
 
In the particular case where $\mathbb{F}$ is the filtration generated by $W$, one obtains a unique solution that is adapted to $\mathbb{F}$.
The class of uniqueness defined in Theorem \ref{thmmain-uniqueness} is the same class where uniqueness is stated when $F=0$, see for example \citep{GHR2024}.
The proof of Theorem \ref{thmmain-uniqueness} is established in Section \ref{sec:strongexistence}. The universal constant $C$ and the constant $\mathbf{M}$ can be tracked to the proof of Proposition \ref{prop:stab-lip} and can be explicitly computed.

Note that the well-posedness of the SDE when $F=0$ is also known in the limit case $\gamma=1-1/(2H)$ if $b$ is a distribution in the non-homogeneous Besov space $\mathcal{B}^{\tilde{\gamma}}_{p,\infty}$ with $\tilde{\gamma}-d/p = \gamma$ and $p<\infty$. In this case, the regularisation properties are based on a critical version of the stochastic sewing lemma that involves a logarithmic term in the right-hand side. While one could establish similar results here in finite time, we are more interested in the long-time behaviour and the exponential convergence towards the invariant measure. More specifically, this logarithmic dependence would appear in the proof of Proposition \ref{prop:stab-lip} (see \eqref{eq:X-Y-progress}) and would lead to a growth of $g(t)=e^{-\kappa_1 t} \| X_t-Y_t \|$ that is like a double exponential, which would not allow us to conclude that the difference of two solutions goes to zero even if the Besov-H\"older norm of $b$ is small.

In order to prove the main results, we start by proving regularisation properties of the fBm in Section \ref{sec:regprop} and a priori estimates in Section \ref{sec:apriori}.

\subsection{Regularisation properties}\label{sec:regprop}
Let $(\Omega, \mathcal{F}, \mathbb{F}, \mathbb{P})$ be a probability space, $W$ be an $\mathbb{F}$-Wiener process and $B$ be the fractional Brownian motion defined by \eqref{eq:MVN}. Throughout the section, we fix $t_0 \in \R$ and write for simplicity $\widetilde{B}_t:= \widetilde{B}^{t_0}_t$ for $t\ge t_0$.
In this section, we establish  quantitative estimates on the regularisation effects of the fractional Brownian motion, which are used later to obtain a priori estimates and to prove the main results. 
To be more precise, given a smooth function $f$, a constant $c\ge0$, and $\mathbb{F}$-adapted processes $\phi,\psi$, we obtain estimates for moments of the integral $\int e^{cr} \langle \phi_r, f(\psi_r+\widetilde{B}_r) \rangle dr$ and study its stability with respect to $\phi,\psi$. The regularisation effect is reflected through the fact that these estimates continue to hold with much more general regularity condition on $f$. 

The first two propositions recall regularisation properties (now stated for the process $\widetilde{B}$) of the map $x \mapsto \int f(\widetilde{B}_r+x) dr$. 

\begin{prop}\label{prop:reg1}
Let $\gamma \in (-1 /(2 H), 1)$, $m \in [2,\infty)$ and $q \in [m,\infty]$. Let $\tau \in (0,1)$ be such that $H(\gamma-1)+\tau >0$. There exists a constant $C:=C(q,m,\tau,\gamma,H)>0$ such that for any $f \in \mathcal{C}^{\infty}$, any $\mathbb{F}$-adapted process $\psi: [t_0,T] \times \Omega \rightarrow \mathbb{R}^d$, any $t_0 \leq s < t$, we have
\begin{align}\label{eq:reg1}
\left\|  \int_s^t f(\psi_r+\widetilde{B}_r) dr  \right\|_{L_{m,q}^{\mathcal{F}_s}} \leq C \| f \|_{\mathcal{C}^\gamma} (t-s)^{1+H(\gamma \wedge 0)} + C \| f \|_{\mathcal{C}^\gamma} \llbracket \psi \rrbracket_{\mathcal{C}^{\tau}_{[s,t]} L_{m,q}} (t-s)^{1+H(\gamma-1)+\tau}.
\end{align}
\end{prop}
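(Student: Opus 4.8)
The plan is to apply the conditional stochastic sewing lemma of Appendix \ref{app:A} to the additive functional $\mathcal I_{s,t} := \int_s^t f(\psi_r + \widetilde B_r)\,dr$, with a germ obtained by freezing the drift $\psi$ \emph{at its conditional mean} rather than at the left endpoint. Concretely, for $t_0 \le u < v \le t$ I set
\[
A_{u,v} := \mathbb{E}^{u} \int_u^v f\big(\mathbb{E}^{u}\psi_r + \widetilde B_r\big)\,dr .
\]
Freezing at $\mathbb{E}^{u}\psi_r$ (and not at $\psi_u$) is exactly what produces the seminorm $\llbracket \psi \rrbracket$ rather than the stronger $[\psi]$: the genuinely predictable part of $\psi$ is kept intact, so only its conditional fluctuation feeds into the sewing error. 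Throughout I use two inputs from Appendix \ref{app:A}: the disintegration (freezing) identity $\mathbb{E}^{u} g(Y + \widetilde B_r) = \big(\mathbb{E}^{u} g(y + \widetilde B_r)\big)\big|_{y = Y}$ for $\mathcal F_u$-measurable $Y$, and the Gaussian smoothing bounds for the innovation, namely $\|\mathbb{E}^{u} f(\cdot + \widetilde B_r)\|_\infty \le C \|f\|_{\mathcal C^\gamma}(r-u)^{H(\gamma\wedge 0)}$ and $\|\nabla\, \mathbb{E}^{u} f(\cdot + \widetilde B_r)\|_\infty \le C \|f\|_{\mathcal C^\gamma}(r-u)^{H(\gamma-1)}$, which follow from the local nondegeneracy of the conditional law of $\widetilde B_r$ (variance $\asymp (r-u)^{2H}$).

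First I would check the norm bound on the germ. Since $\mathbb{E}^{u}\psi_r$ is $\mathcal F_u$-measurable, the first smoothing bound gives $|A_{u,v}| \le C\|f\|_{\mathcal C^\gamma}\int_u^v (r-u)^{H(\gamma\wedge 0)}\,dr \le C\|f\|_{\mathcal C^\gamma}(v-u)^{1+H(\gamma\wedge 0)}$ pointwise, hence $\|A_{u,v}\|_{L_{m,q}^{\mathcal F_s}} \le C\|f\|_{\mathcal C^\gamma}(v-u)^{1+H(\gamma\wedge 0)}$. The exponent exceeds $1/2$ precisely because $\gamma > -1/(2H)$, and taking $(u,v)=(s,t)$ this term becomes the first summand of \eqref{eq:reg1}. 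The same pointwise bound controls $\|\delta A_{u,v,w}\|_{L_{m,q}^{\mathcal F_s}} \le C\|f\|_{\mathcal C^\gamma}(w-u)^{1+H(\gamma\wedge 0)}$, delivering the $\tfrac{1}{2}+\varepsilon$ condition of the sewing lemma with a constant independent of $\psi$.

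The heart of the argument is the bound on $\mathbb{E}^{u}\delta A_{u,v,w}$. By the tower property $\mathbb{E}^{u}=\mathbb{E}^{u}\mathbb{E}^{v}$, one has $\mathbb{E}^{u}\delta A_{u,v,w} = \mathbb{E}^{u}\int_v^w \big[f(\mathbb{E}^{u}\psi_r + \widetilde B_r) - f(\mathbb{E}^{v}\psi_r + \widetilde B_r)\big]\,dr$, where both shifts are $\mathcal F_v$-measurable. Conditioning at level $v$ and using the gradient smoothing bound gives, pointwise, $|\mathbb{E}^{u}\delta A_{u,v,w}| \le C\|f\|_{\mathcal C^\gamma}\int_v^w (r-v)^{H(\gamma-1)} |\mathbb{E}^{u}\psi_r - \mathbb{E}^{v}\psi_r|\,dr$. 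The key identity is $\mathbb{E}^{u}\psi_r - \mathbb{E}^{v}\psi_r = \mathbb{E}^{v}(\psi_r - \mathbb{E}^{u}\psi_r)$, so conditional Jensen yields $\|\mathbb{E}^{u}\psi_r - \mathbb{E}^{v}\psi_r\|_{L_{m,q}^{\mathcal F_s}} \le \|\psi_r - \mathbb{E}^{u}\psi_r\|_{L_{m,q}^{\mathcal F_s}} \le \llbracket \psi \rrbracket_{\mathcal C^\tau_{[s,t]} L_{m,q}} (r-u)^\tau$; this is exactly where only the conditional fluctuation of $\psi$ enters. Integrating (the convergence $H(\gamma-1)>-1$ is guaranteed by $H(\gamma-1)+\tau>0$ and $\tau<1$) gives $\|\mathbb{E}^{u}\delta A_{u,v,w}\|_{L_{m,q}^{\mathcal F_s}} \le C\|f\|_{\mathcal C^\gamma}\llbracket \psi \rrbracket (w-u)^{1+H(\gamma-1)+\tau}$, whose exponent exceeds $1$ precisely under the hypothesis $H(\gamma-1)+\tau>0$.

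With these three estimates the sewing lemma produces a process with $\|\mathcal A_{s,t} - A_{s,t}\|_{L_{m,q}^{\mathcal F_s}} \le C\|f\|_{\mathcal C^\gamma}\big((t-s)^{1+H(\gamma\wedge 0)} + \llbracket \psi \rrbracket (t-s)^{1+H(\gamma-1)+\tau}\big)$; combining with the bound on $\|A_{s,t}\|$ via the triangle inequality yields \eqref{eq:reg1}, once $\mathcal A_{s,t}$ is identified with $\mathcal I_{s,t}$. Since $f$ is smooth this identification is routine: $\mathcal I$ is additive and one checks that $\|\mathbb{E}^{u}(\mathcal I_{u,v}-A_{u,v})\|$ and $\|\mathcal I_{u,v}-A_{u,v}\|$ are of the required higher order, so uniqueness in the sewing lemma forces $\mathcal A=\mathcal I$, all constants depending only on $\|f\|_{\mathcal C^\gamma}$ (which is what later permits passage to distributional $b$). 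I expect the main obstacle to be the choice of germ together with the bookkeeping of the conditional norms $\|\cdot\|_{L_{m,q}^{\mathcal F_s}}$: the naive freezing at $\psi_u$ forces the full increment $\psi_v-\psi_u$ (hence $[\psi]$, and a spurious dependence on the drift of $\psi$) into $\mathbb{E}^{u}\delta A$, whereas freezing at $\mathbb{E}^{u}\psi_r$ and invoking the tower/Jensen identity above is what isolates $\llbracket\psi\rrbracket$.
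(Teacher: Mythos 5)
Your proof is correct, but it takes a different route from the paper: the paper disposes of Proposition \ref{prop:reg1} in three lines by citing \cite[Proposition 4.5(a)]{GHR2024} (which gives the estimate for $\int f(\psi_r+B_r)\,dr$), applying it to the shifted process $\psi-\bar B^{t_0}$, and observing that the adapted history $\bar B^{t_0}$ drops out of the conditional-fluctuation seminorm $\llbracket\,\cdot\,\rrbracket$; the case $\gamma\ge 0$ is handled separately via the sup norm. You instead give a self-contained stochastic sewing proof working directly with the innovation $\widetilde B$, with the germ $A_{u,v}=\EE^u\int_u^v f(\EE^u\psi_r+\widetilde B_r)\,dr$. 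Your choice of freezing $\psi$ at its conditional mean is exactly the right one, and your three estimates check out: the germ bound with exponent $1+H(\gamma\wedge 0)>1/2$ uses $\gamma>-1/(2H)$; the $\EE^u\delta A$ bound with exponent $1+H(\gamma-1)+\tau>1$ uses the tower/Jensen identity $\EE^u\psi_r-\EE^v\psi_r=\EE^v(\psi_r-\EE^u\psi_r)$ to extract $\llbracket\psi\rrbracket$, together with $H(\gamma-1)>-\tau>-1$ for integrability; and the identification $\mathcal A=\mathcal I$ for smooth $f$ is routine as you say (the error $\EE^u(\mathcal I_{u,v}-A_{u,v})$ is of order $(v-u)^{1+\tau}$ by Lipschitzness, and $\mathcal I_{u,v}-A_{u,v}$ is of order $(v-u)$). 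The only bookkeeping point worth flagging is that the seminorm \eqref{eq:semi-norm-2} conditions at the left endpoint $u$ of each increment, so one passes from $\|\psi_r-\EE^u\psi_r\|_{L^{\mathcal F_u}_{m,q}}$ to the $\mathcal F_s$-conditional norm by a further conditional Jensen step (valid since $q\ge m$); this is standard and does not affect the argument. What your approach buys is transparency (it makes visible why only $\llbracket\psi\rrbracket$, not $[\psi]$, appears) and uniformity in $\gamma\in(-1/(2H),1)$ without a case split; what the paper's reduction buys is brevity and reuse of an already-published estimate.
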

\begin{proof}
For $\gamma <0$, in \cite[Proposition 4.5$(a)$]{GHR2024}, the same estimate is given for $\int f(\psi_r+B_r)dr$ instead of $\int f(\psi_r+\widetilde B_r)dr$. Hence, one can apply their result for the process $\psi-\bar{B}^{t_0}$, noting that $(\psi-\bar{B}^{t_0})+B=\psi-\widetilde{B}$ and $\llbracket \psi-\bar{B}^{t_0} \rrbracket_{\mathcal{C}^{\tau}_{[s,t]} L_{m,q}}= \llbracket \psi \rrbracket_{\mathcal{C}^{\tau}_{[s,t]} L_{m,q}}$ to obtain \eqref{eq:reg1}.
For $\gamma \ge 0$, the result follows trivially by using the supremum norm of $f$.
\end{proof}

The following proposition gives a quantitative analysis of the bound in Proposition \ref{prop:reg1} and is borrowed from \cite{GHR2024}.

\begin{prop}\label{prop:reg2}
Let $\gamma \in (1-1 /(2 H), 1)$ and $m \in [2,\infty)$. Let $\tau \in (0,1)$ such that $(\tau \wedge 1/2) + H(\gamma-1)>0$. There exists a constant $C:=C(m,\gamma,H,\tau)>0$ such that for any $f \in \mathcal{C}^{\infty}$, any $\mathbb{F}$-adapted processes $\psi, \phi: [t_0,T] \times \Omega \rightarrow \mathbb{R}^d$, any $t_0 \leq s < t$ with $t-s \leq 1$, we have
\begin{align}\label{eq:reg2}
\begin{split}
& \left\|  \int_s^t f(\psi_r+\widetilde{B}_r)-f(\phi_r+\widetilde{B}_r) dr  \right\|_{L_m} \\ & \leq C \| f \|_{\mathcal{C}^\gamma}   \llbracket \psi-\phi \rrbracket_{\mathcal{C}^{\tau}_{[s,t]}L_m}  (t-s)^{1+\tau+H(\gamma-1)} \\
& + C \|f \|_{\mathcal{C}^\gamma} (1+ \llbracket \psi \rrbracket_{\mathcal{C}^{1+H (\gamma \wedge 0)}_{[s,t]}L_{1,\infty}} ) \sup_{r \in [s,t]}\|\psi_r-\phi_r\|_{L_m} (t-s)^{1+H(\gamma-1)} .
\end{split}
\end{align}
\end{prop}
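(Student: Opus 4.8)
The shortest route is the reduction already used for Proposition \ref{prop:reg1}. The two-process stability estimate \eqref{eq:reg2} for the innovation $\widetilde B$ should be deduced from the corresponding quantitative estimate for the genuine fractional Brownian motion $B$ recorded in \cite{GHR2024}, by applying it to the shifted adapted processes $\psi-\bar B^{t_0}$ and $\phi-\bar B^{t_0}$. Indeed, adding $B$ to each shifted process recovers $\psi+\widetilde B$ and $\phi+\widetilde B$, while the common, $\mathcal F_{t_0}$-measurable shift $\bar B^{t_0}$ leaves both the seminorm $\llbracket\,\cdot\,\rrbracket_{\mathcal C^{\tau}_{[s,t]}L_m}$ invariant (its increments have vanishing centred part) and the difference $\psi-\phi$ unchanged, hence also $\sup_{r}\|\psi_r-\phi_r\|_{L_m}$ and $\llbracket\psi\rrbracket_{\mathcal C^{1+H(\gamma\wedge0)}_{[s,t]}L_{1,\infty}}$. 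This is exactly the manipulation performed in the proof of Proposition \ref{prop:reg1}.

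If instead one wants to see the estimate from scratch, I would run the stochastic sewing lemma of Appendix \ref{app:A} on $\mathcal A_t:=\int_{t_0}^t\big(f(\psi_r+\widetilde B_r)-f(\phi_r+\widetilde B_r)\big)\,dr$ with the frozen germ
\[
A_{s,t}=\int_s^t\EE^s\big[f(\psi_s+\widetilde B_r)-f(\phi_s+\widetilde B_r)\big]\,dr .
\]
The engine is the conditional Gaussian structure of the innovation: for $\mathcal F_s$-measurable $a$ one has $\EE^s f(a+\widetilde B_r)=(P_{\sigma_{s,r}}f)(a+\EE^s\widetilde B_r)$ with an effective smoothing scale $\sigma_{s,r}\ge c(r-s)^{2H}$ (the nondegeneracy lemma of Appendix \ref{app:A}), together with the heat-kernel bounds $\|\nabla P_\sigma f\|_\infty\le C\|f\|_{\mathcal C^\gamma}\sigma^{(\gamma-1)/2}$ and $\|\nabla^2 P_\sigma f\|_\infty\le C\|f\|_{\mathcal C^\gamma}\sigma^{(\gamma-2)/2}$. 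A first-order Taylor expansion of $A_{s,t}$ in the frozen variable, together with integration of the integrable power $(r-s)^{H(\gamma-1)}$ (integrable since $\gamma>1-1/(2H)$), then gives
\[
\|A_{s,t}\|_{L_m}\le C\|f\|_{\mathcal C^\gamma}\Big(\sup_{r\in[s,t]}\|\psi_r-\phi_r\|_{L_m}\Big)(t-s)^{1+H(\gamma-1)},
\]
which is precisely the unweighted part of the second term of \eqref{eq:reg2}; note $1+H(\gamma-1)>1/2$.

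For the sewing one must control $\EE^s\delta A_{s,u,t}$ and $\delta A_{s,u,t}$, where $\delta A_{s,u,t}=\int_u^t\big(\EE^s g^s_r-\EE^u g^u_r\big)\,dr$ with $g^v_r=f(\psi_v+\widetilde B_r)-f(\phi_v+\widetilde B_r)$. By the tower property $\EE^s\delta A_{s,u,t}$ is the integral over $[u,t]$ of $\EE^s$ of the mixed second difference of $f(\cdot+\widetilde B_r)$ at the four points $\psi_s,\phi_s,\psi_u,\phi_u$; expanding this difference and pairing the gradient/Hessian bounds above against the increment $(\psi-\phi)_u-(\psi-\phi)_s$ (controlled by $\llbracket\psi-\phi\rrbracket_{\mathcal C^\tau}$) yields a bound of order $(t-s)^{1+\tau+H(\gamma-1)}$, which is the first term of \eqref{eq:reg2}; the exponent exceeds $1$ precisely because $\tau+H(\gamma-1)>0$. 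The non-conditioned quantity $\delta A_{s,u,t}$ splits into a part of $\EE^s$-mean zero (a martingale increment, invisible to $\EE^s\delta A$) and a remainder; since its exponent $1+H(\gamma-1)$ lies strictly below $1$ but above $1/2$, a careful estimate of $\delta A_{s,u,t}$ is the only place that can (and does) produce the $\llbracket\psi\rrbracket$-weighted part of the second term of \eqref{eq:reg2}. The hypothesis $(\tau\wedge1/2)+H(\gamma-1)>0$ is exactly what places both sewing exponents above the thresholds $1$ and $1/2$ required by the lemma. Feeding these estimates into the sewing lemma yields $\|\mathcal A_t-\mathcal A_s\|_{L_m}\le\|A_{s,t}\|_{L_m}+C(\cdots)$ and, by its consistency part, $\mathcal A_t-\mathcal A_s=\int_s^t\big(f(\psi_r+\widetilde B_r)-f(\phi_r+\widetilde B_r)\big)\,dr$, which is \eqref{eq:reg2}.

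The main obstacle is the estimate of $\EE^s\delta A_{s,u,t}$. Because $\psi_u,\phi_u$ are $\mathcal F_u$- but not $\mathcal F_s$-measurable, the conditional-density identity cannot be applied directly, and the two smoothing scales $\sigma_{s,r}\sim(r-s)^{2H}$ and $\sigma_{u,r}\sim(r-u)^{2H}$ enter simultaneously. The delicate point is to organise the four-point second difference so that the contribution that is first order in $\psi-\phi$ is charged to $\llbracket\psi-\phi\rrbracket_{\mathcal C^\tau}$ (producing the first term), while the genuinely second-order contribution is charged to $\llbracket\psi\rrbracket$ times $\sup_r\|\psi_r-\phi_r\|_{L_m}$ (producing the weighted second term), keeping every resulting power of $(t-s)$ above the sewing threshold.
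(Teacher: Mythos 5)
Your first paragraph is precisely the paper's proof: it reduces \eqref{eq:reg2} to \cite[Proposition 4.7]{GHR2024} by applying that argument to the shifted processes $\psi-\bar B^{t_0}$ and $\phi-\bar B^{t_0}$ and using that this $\mathcal F_{t_0}$-measurable shift leaves the conditionally centred seminorms $\llbracket\cdot\rrbracket$ and the difference $\psi-\phi$ unchanged. The only nuance the paper adds --- that the cited result must be re-derived keeping the two powers of $(t-s)$ separate and with the seminorm \eqref{eq:semi-norm-2} in place of the classical H\"older norm --- is exactly what your from-scratch sewing sketch supplies, so the proposal matches the paper's route.
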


\begin{proof}
In \cite[Proposition 4.7]{GHR2024}, a similar estimate is given for $\int f(\psi_r+B_r)-f(\phi_r+B_r)dr$. Here, the proof follows the same lines but applied for the processes $\psi-\bar{B}^{t_0}$ and $\phi-\bar{B}^{t_0}$. The differences are: we keep track of the two powers in $(t-s)$ when applying the stochastic sewing lemma and we state the result with the semi-norm \eqref{eq:semi-norm-2} instead of the classical H\"older norm. Noting that $(\cdot-\bar{B}^{t_0})+B=\cdot-\widetilde{B}$ and $\llbracket \cdot-\bar{B}^{t_0} \rrbracket_{\mathcal{C}^{\tau}_{[s,t]} L_{m,q}}= \llbracket \cdot \rrbracket_{\mathcal{C}^{\tau}_{[s,t]} L_{m,q}}$, one can check that the proof goes through.
\end{proof}

The next two propositions are useful to get long-time properties of the solutions to \eqref{eq:sde}. 

\begin{prop}\label{prop:reg4}
Let $\gamma  \in(-1/(2H),1)$ and $m \in[2, \infty)$. There exists a constant $C:=C(m,\gamma,H)>0$ such that for any $c \ge 0$, $f \in$ $\mathcal{C}^{\infty}$, $\mathbb{F}$-adapted processes $\psi, \phi: [t_0,T] \times \Omega \rightarrow \mathbb{R}^d$, and any $t_0 \leq s<t$ with $t-s \leq 1$, we have 
\begin{align}\label{eq:reg4}
\begin{split}
& \left\|\int_s^t e^{-c(t-r)} \langle \phi_r, f(\psi_r+\widetilde{B}_r) \rangle d r\right\|_{L_m} \\
& \leq C \| f \|_{\mathcal{C}^\gamma} \Big(   \sup_{r \in [s,t]} \| \phi_r \|_{L_m} +  \sup_{r \in [s,t]} \| \phi_r \|_{L_{2m}}  \llbracket \psi \rrbracket_{\mathcal{C}^{1+H(\gamma \wedge 0)}_{[s,t]}L_{2m}} + \llbracket \phi \rrbracket_{\mathcal{C}^{1+H(\gamma \wedge 0)}_{[s,t]}L_m} \Big)  (t-s)^{1+H(\gamma \wedge 0)}  .
\end{split}
\end{align}
\end{prop}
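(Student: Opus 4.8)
The plan is to treat this weighted, $\phi$-paired integral as a stochastic-sewing object, extending the occupation-type estimates of Propositions \ref{prop:reg1}--\ref{prop:reg2} so as to absorb both the exponential weight $e^{-c(t-r)}$ and the pairing against the adapted process $\phi$. Since $f\in\mathcal C^\infty$ the target integral is a genuine Riemann integral, and the case $\gamma\ge0$ is elementary ($\|f\|_\infty\le\|f\|_{\mathcal C^\gamma}$, $e^{-c(t-r)}\le1$, Cauchy--Schwarz), so I would concentrate on $\gamma\in(-1/(2H),0)$, where $1+H(\gamma\wedge0)=1+H\gamma\in(1/2,1)$. The sewing inputs come from the regularisation of the innovation $\widetilde B$ recalled in Appendix \ref{app:A}: conditionally on $\mathcal F_s$ the increment is non-degenerate Gaussian with variance $\gtrsim(r-s)^{2H}$, whence, for $\mathcal F_s$-measurable $x$, $\|\mathbb E^s f(x+\widetilde B_r)\|_\infty\le C\|f\|_{\mathcal C^\gamma}(r-s)^{H\gamma}$, the gradient analogue with exponent $H(\gamma-1)$, and a conditional fluctuation bound $\|f(x+\widetilde B_r)-\mathbb E^s f(x+\widetilde B_r)\|_{L_{p,\infty}^{\mathcal F_s}}\le C\|f\|_{\mathcal C^\gamma}(r-s)^{H\gamma}$. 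I would then apply the stochastic sewing lemma to the germ
\[
A_{s,t}:=\mathbb E^s\int_s^t e^{-c(t-r)}\langle\phi_r,\,f(\psi_s+\widetilde B_r)\rangle\,dr,
\]
in which only $\psi$ is frozen at the left endpoint while $\phi_r$ is \emph{kept}. This choice is essential: freezing $\phi$ as well would force the predictable increment $\phi_s-\mathbb E^s\phi_u=\mathbb E^s(\phi_s-\phi_u)$ into the bound on $\|\mathbb E^s\delta A\|$, a quantity controlled only by $[\phi]$ and not by the weaker seminorm $\llbracket\phi\rrbracket$ that appears in the statement. After checking that the process sewn from $A$ coincides with the integral to be estimated, the proof splits into the germ bound and the sewing error.

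The germ itself produces the first and third terms. Writing $\phi_r=\mathbb E^s\phi_r+(\phi_r-\mathbb E^s\phi_r)$ and, correspondingly, $f(\psi_s+\widetilde B_r)=\mathbb E^s f(\psi_s+\widetilde B_r)+(f(\psi_s+\widetilde B_r)-\mathbb E^s f(\psi_s+\widetilde B_r))$, the ``mean times mean'' contribution $\langle\mathbb E^s\phi_r,\mathbb E^s f(\psi_s+\widetilde B_r)\rangle$ is bounded by $C\|f\|_{\mathcal C^\gamma}\sup_r\|\phi_r\|_{L_m}(r-s)^{H\gamma}$ and integrates to the $\sup_r\|\phi_r\|_{L_m}$ term with the exact power $1+H\gamma$. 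The ``fluctuation times fluctuation'' contribution is estimated by conditional H\"older, keeping $\phi_r-\mathbb E^s\phi_r$ in conditional $L_m$ and placing the $f$-fluctuation in the conjugate conditional $L_{p}$ (whose a.s.\ bound is $C\|f\|_{\mathcal C^\gamma}(r-s)^{H\gamma}$); this yields $\llbracket\phi\rrbracket_{\mathcal C^{1+H(\gamma\wedge0)}_{[s,t]}L_m}$ with an extra factor $(r-s)^{1+H\gamma}$, hence a power $\ge1+H\gamma$ that is absorbed into $(t-s)^{1+H(\gamma\wedge0)}$ using $t-s\le1$. The crucial point is that this route stays in $L_m$ (rather than $L_{2m}$), which is why the $f$-fluctuation must be controlled in the strong conditional $L_{p,\infty}^{\mathcal F_s}$ sense rather than via a plain Cauchy--Schwarz split.

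The cross term arises from the sewing error. Computing $\delta A_{s,u,t}$, the genuinely new contribution is the $\psi$-increment $f(\psi_s+\widetilde B_r)-f(\psi_u+\widetilde B_r)$ paired with $\phi_r$; following the manipulations behind Propositions \ref{prop:reg1}--\ref{prop:reg2} (a mean-value step on the smoothed $\mathbb E^s f$ with the gradient exponent $H(\gamma-1)$, and conditional centring to bring out $\llbracket\psi\rrbracket$ rather than $[\psi]$) and splitting the pairing by a \emph{conditional} Cauchy--Schwarz into two $L_{2m}$ factors, one obtains the $\sup_r\|\phi_r\|_{L_{2m}}\llbracket\psi\rrbracket_{L_{2m}}$ term, again with a $(t-s)$-power exceeding $1+H\gamma$. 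The two sewing exponents, $\|\mathbb E^s\delta A\|$ with exponent $>1$ and $\|\delta A\|$ with exponent $>1/2$, are where the restriction $\gamma>-1/(2H)$ enters, through $1+H\gamma>1/2$; the exponential weight is harmless throughout since $0\le e^{-c(t-r)}\le1$ and it is smooth in $r$. I expect the main obstacle to be exactly this bookkeeping: organising $A_{s,t}$ and $\delta A_{s,u,t}$ so that only the conditionally centred seminorms $\llbracket\phi\rrbracket,\llbracket\psi\rrbracket$ (never the full H\"older seminorms) survive, while simultaneously landing each term in the correct space ($L_m$ for $\phi$ alone, $L_{2m}$ for the $\phi$--$\psi$ cross term), which is precisely what dictates both the choice of germ and the selective use of conditional Cauchy--Schwarz.
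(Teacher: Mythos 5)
Your overall strategy (stochastic sewing, trivial case $\gamma\ge 0$, Lemma \ref{lemreg-B}-type inputs on the innovation) matches the paper's, but your choice of germ introduces a genuine gap. You keep $\phi_r$ un-conditioned inside $A_{s,t}=\EE^s\int_s^t e^{-c(t-r)}\langle\phi_r,f(\psi_s+\widetilde B_r)\rangle\,dr$, and the resulting ``fluctuation times fluctuation'' term $\EE^s\langle\phi_r-\EE^s\phi_r,\,f(\psi_s+\widetilde B_r)-\EE^s f(\psi_s+\widetilde B_r)\rangle$ is handled by invoking a conditional bound $\|f(x+\widetilde B_r)-\EE^s f(x+\widetilde B_r)\|_{L_{p,\infty}^{\mathcal F_s}}\le C\|f\|_{\mathcal C^\gamma}(r-s)^{H\gamma}$. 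That bound is false: Lemma \ref{lemreg-B} smooths only the conditional \emph{mean} of $f$; the conditional fluctuation of $f$ around that mean is genuinely of size $\|f\|_\infty$ (take $f=P_\varepsilon b$ for an oscillating $b$: $\EE^s f(x+\widetilde B_r)$ stays small while $\EE^s|f|^2(x+\widetilde B_r)$ does not, and there is no product estimate $\||f|^2\|_{\mathcal C^{2\gamma}}\lesssim\|f\|_{\mathcal C^\gamma}^2$ at negative regularity). Without that bound your germ estimate degenerates to a constant proportional to $\|f\|_\infty$, which is precisely what the proposition must avoid, since it is applied to mollifications $b^n$ whose supremum norms blow up.

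The motivation you give for this germ --- that conditioning $\phi$ would force the increment into a quantity controlled only by $[\phi]$ --- rests on a misreading. The paper does not freeze $\phi$ at its time-$s$ \emph{value}; it replaces $\phi_r$ by $\EE^s\phi_r$, keeping the time argument, and takes the germ
$A_{s,t}=\int_s^t e^{-c(T-r)}\langle \EE^s\phi_r,\EE^s f(\EE^s\psi_r+\widetilde B_r)\rangle\,dr$. Then the germ itself is bounded purely through the conditional mean of $f$ (yielding the $\sup_r\|\phi_r\|_{L_m}$ term), and the $\llbracket\phi\rrbracket$ term arises from the three-point estimate, where the increment $\EE^s\phi_r-\EE^u\phi_r$ is controlled by $\llbracket\phi\rrbracket$ via $|\EE^s\phi_r-\EE^u\phi_r|\le|\phi_r-\EE^s\phi_r|+|\phi_r-\EE^u\phi_r|$. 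All cross terms involving pointwise fluctuations of $f$ are thereby pushed into the sewing remainder, which is exactly what makes the constant depend on $f$ only through $\|f\|_{\mathcal C^\gamma}$. A secondary issue: you freeze $\psi$ at the left-endpoint value $\psi_s$ rather than at $\EE^s\psi_r$, so the cross term in $\delta A_{s,u,t}$ involves $\psi_u-\psi_s$, whose conditional size is controlled by $[\psi]$ but not by $\llbracket\psi\rrbracket$ (consider a deterministic rough $\psi$, for which $\llbracket\psi\rrbracket=0$); this would prove a weaker statement than the one claimed. Both problems disappear once the germ conditions both $\phi$ and $\psi$ as in the paper.
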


\begin{proof}
    The case when $\gamma \ge 0$ is straightforward because $f$ is bounded. We thus focus on the case $\gamma <0$.
Let $t_0 \leq S \leq T$ with $T-S \leq 1$. For $(s,t) \in \Delta_{[S,T]}$, we define
\begin{align} \label{eq:A-two-}
\begin{split}
  A_{s,t} & :=  \int_s^t e^{-c(T-r)} \langle  \EE^s \phi_{r}, \EE^{s} f(\EE^s \psi_{r}+\widetilde{B}_r) \rangle \, dr,  \\
  \mathcal{A}_{t} & := \int_S^t e^{-c(T-r)}\langle \phi_r, f(\psi_r+\widetilde{B}_r) \rangle \, dr  .
  \end{split}
\end{align}

Assume that the quantities that appear in the right-hand side of \eqref{eq:reg4} are finite, otherwise 
\eqref{eq:reg4} trivially holds. We check the conditions in order to apply the stochastic sewing lemma (Lemma \ref{lemSSL}). Using the Lipschitz norm of $f$, one can check that \eqref{sts3} holds.

We show that \eqref{sts1} and \eqref{sts2} hold true with $\varepsilon_1=1+H\gamma+H(\gamma-1)$ and $\varepsilon_2=1/2+H\gamma$.
Using Lemma \ref{lemreg-B}, we write
\begin{align*}
| A_{s,t} | & \leq  C \int_s^t |\EE^s \phi_{r}| (r-s)^{H  \gamma} \| f \|_{\mathcal{C}^\gamma} \, dr ,
\end{align*}
which implies
\begin{align*}
\| A_{s,t} \|_{L_m} & \leq  C \int_s^t   \| \phi_r\|_{L_m}  (r-s)^{H \gamma} \| f \|_{\mathcal{C}^\gamma} \, dr,
\end{align*}
and consequently, we get
\begin{align}
\| A_{s,t} \|_{L_m} & \leq C \| f \|_{\mathcal{C}^\gamma} \sup_{r \in {[S,T]}}\| \phi_r\|_{L_m}  (t-s)^{1+H \gamma} .  \label{eq:boundAst-two-}
\end{align}
Let $s \leq u \leq t$ and let us now provide a bound on $\EE^s \delta A_{s,u,t}$.  Using the tower property of conditional expectations, we have 
\begin{align*}
\EE^{s} \delta A_{s,u,t}  & = \EE^{s}  \int_u^t  e^{-c(T-r)} \langle \EE^s \phi_{r} , \EE^s f(\EE^s \psi_{r}+\widetilde{B}_r) \rangle \ dr \\
& \quad - \EE^{s}  \int_u^t   e^{-c(T-r)} \langle \EE^u \phi_{r} , \EE^u f(\EE^u \psi_{u}+\widetilde{B}_r) \rangle \ dr .
\\
 & =   \int_u^t   e^{-c(T-r)} \langle \EE^s \phi_{r}, \EE^s  f(\EE^s \psi_{r}+\widetilde{B}_r) -\EE^s f(\EE^u \psi_{r}+\widetilde{B}_r) \rangle dr
 \\
 & \qquad +\EE^s  \int_u^t  e^{-c(T-r)}\langle \EE^s \phi_{r} -\EE^u \phi_{r},  \EE^u f(\EE^u \psi_{r}+\widetilde{B}_r) \rangle \,  dr 
\\ & =: \mathcal{J}_1 + \mathcal{J}_2 .
\end{align*}
Let us first bound $\mathcal{J}_2$. Using Lemma \ref{lemreg-B}~$(i)$, we get
\begin{align*}
 \mathcal{J}_2  & \leq C \| f \|_{\mathcal{C}^\gamma}  \EE^s\int_u^t  |\EE^s \phi_{r}-\EE^u \phi_{r}|    (r-u)^{H \gamma}\, dr. 
 \end{align*}
 Taking $L_m$-norms in the above, using Minkowski's inequality, and the trivial bound 
$$ 
| \EE^s \phi_{r}-\EE^u \phi_{r}|\leq |  \phi_{r}-\EE^s \phi_{r}|+| \phi_{r}-\EE^u \phi_{r}|, 
$$
we get
 \begin{align*}
\|  \mathcal{J}_2 \|_{L_m} 
& \leq C \| f \|_{\mathcal{C}^\gamma}  \llbracket \phi \rrbracket_{\mathcal{C}^{1+H\gamma}_{[S,T]}L_m} (t-s)^{2+2H\gamma} .
\end{align*}
For $\mathcal{J}_1$, using Lemma \ref{lemreg-B}~$(ii)$, we have
\begin{align*}
| \mathcal{J}_1 | & \leq C \int_u^t | \EE^s \phi_{r} |\EE^s| \EE^s\psi_{r}-\EE^u \psi_{r}| (r-u)^{H(\gamma-1)} \| f \|_{\mathcal{C}^\gamma} dr \\
& \leq C\| f \|_{\mathcal{C}^\gamma} \int_u^t  \frac{| \EE^s \phi_{r}| \EE^s| \EE^s \psi_{r}-\EE^u \psi_{r}|}{(r-s)^{1+H\gamma}}  (r-s)^{1+H\gamma}  (r-u)^{H(\gamma-1)} dr .
\end{align*}
 By taking the moments and using the Minkowski and the Cauchy-Schwarz inequalities, we obtain
\begin{align*}
 \| \mathcal{J}_2 \|_{L_m} & \leq C \| f \|_{\mathcal{C}^\gamma} \sup_{r \in [S,T]} \| \phi_{r}\|_{L_{2m}} \llbracket \psi \rrbracket_{\mathcal{C}^{1+H\gamma}_{[S,T]}L_{2m}} (t-s)^{2+H\gamma + H(\gamma-1)} .
\end{align*}

Combining the bounds on $\mathcal{J}_1$ and $\mathcal{J}_2$, we deduce that
\begin{align}\label{eq:bound-deltaA}
\| \EE^s \delta A_{s,u,t} \|_{L_m} \leq C \| f \|_{\mathcal{C}^\gamma} \Big(  \sup_{r \in [S,T]} \| \phi_{r}\|_{L_{2m}} \llbracket \psi \rrbracket_{\mathcal{C}^{1+H\gamma}_{[S,T]}L_{2m}}  + \llbracket \phi \rrbracket_{\mathcal{C}^{1+H\gamma}_{[S,T]}L_m}  \Big) (t-s)^{2+H\gamma + H(\gamma-1)}.
\end{align}
Consequently, we can apply Lemma \ref{lemSSL} and deduce \eqref{eq:reg4}.
\end{proof}

The following proposition will be used when comparing two solutions $X$ and $Y$ and proving that the moments of $X-Y$ decay exponentially.
\begin{prop}\label{prop:reg3}
Let $\gamma  \in(1-1/(2H),1)$ and $m \in[2, \infty)$. There exists a constant $C:= C(m,\gamma,H)>0$ such that for any $c \ge 0$, any $f \in$ $\mathcal{C}^{\infty}$, any $\mathbb{F}$-adapted processes $\psi, \phi: [t_0,T] \times \Omega \rightarrow \mathbb{R}^d$, and any $t_0 \leq s<t$ with $t-s \leq 1$, we have
\begin{align}\label{eq:reg3}
\begin{split}
& \left\|  \int_{s}^{t} e^{-c(t-r)} \langle \psi_r-\phi_r , f(\psi_r+\widetilde{B}_r)-f(\phi_r+\widetilde{B}_r) \rangle dr \right\|_{L_m} \\ & \leq  C  \| f \|_{\mathcal{C}^\gamma}  (1+\llbracket \psi \rrbracket_{\mathcal{C}^{1+H(\gamma \wedge 0)}_{[s,t]} L_{1,\infty}} ) \sup_{r \in [s,t]} \| \psi_r-\phi_r \|_{L_{2m}}^2 (t-s)^{1+H(\gamma-1)} \\ & \quad + C   \| f \|_{\mathcal{C}^\gamma}   \llbracket \psi-\phi \rrbracket_{\mathcal{C}^{1/2}_{[s,t]} L_{2m}}^2  (t-s)^{3/2+H(\gamma-1)} .
\end{split}
\end{align}
\end{prop}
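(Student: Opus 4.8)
The plan is to apply the stochastic sewing lemma (Lemma \ref{lemSSL}) to the process
\[
\mathcal A_t := \int_S^t e^{-c(T-r)}\langle \psi_r-\phi_r,\, f(\psi_r+\widetilde B_r)-f(\phi_r+\widetilde B_r)\rangle\,dr,\qquad (s,t)\in\Delta_{[S,T]},\ T-S\le 1,
\]
with the germ
\[
A_{s,t} := \int_s^t e^{-c(T-r)}\big\langle \EE^s(\psi_r-\phi_r),\, \EE^s\big[f(\EE^s\psi_r+\widetilde B_r)-f(\EE^s\phi_r+\widetilde B_r)\big]\big\rangle\,dr,
\]
exactly mirroring the proof of Proposition \ref{prop:reg4}, the only change being that both the test factor and the integrand now carry the difference structure. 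Taking finally $S=s$, $T=t$ recovers \eqref{eq:reg3}, and the weight $e^{-c(t-r)}\in(0,1]$ is harmless and bounded by $1$. I write $\Psi:=\psi-\phi$ and, as in Proposition \ref{prop:reg4}, note that the case $\gamma\ge0$ is trivial since $f$ is bounded, so I focus on $\gamma<0$. The consistency condition \eqref{sts3} follows from the smoothness of $f$ as in Proposition \ref{prop:reg4}, so the work is to verify \eqref{sts1} and \eqref{sts2}.

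For \eqref{sts1} I would bound $\|A_{s,t}\|_{L_m}$ using the first-order regularisation estimate Lemma \ref{lemreg-B}$(ii)$, which gives $|\EE^s[f(\EE^s\psi_r+\widetilde B_r)-f(\EE^s\phi_r+\widetilde B_r)]|\le C\|f\|_{\mathcal C^\gamma}|\EE^s\Psi_r|(r-s)^{H(\gamma-1)}$. Pairing against $\EE^s\Psi_r$ produces $|\EE^s\Psi_r|^2(r-s)^{H(\gamma-1)}$; a conditional Jensen/Cauchy--Schwarz step converts the square into $\||\EE^s\Psi_r|^2\|_{L_m}\le\|\Psi_r\|_{L_{2m}}^2$, and integrating yields $\|A_{s,t}\|_{L_m}\le C\|f\|_{\mathcal C^\gamma}\sup_r\|\Psi_r\|_{L_{2m}}^2(t-s)^{1+H(\gamma-1)}$. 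Since $\gamma>1-1/(2H)$ forces $1+H(\gamma-1)>1/2$, this is \eqref{sts1} with $\varepsilon_2=1/2+H(\gamma-1)>0$, and it accounts for the ``$1$'' in the first term of the right-hand side of \eqref{eq:reg3}.

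For \eqref{sts2} I decompose $\EE^s\delta A_{s,u,t}=\mathcal J_1+\mathcal J_2$ as in Proposition \ref{prop:reg4}. The term $\mathcal J_2$ collects the variation of the outer factor $\EE^s\Psi_r-\EE^u\Psi_r$ tested against the once-regularised difference; by Lemma \ref{lemreg-B}$(ii)$, Cauchy--Schwarz, and the $\mathcal C^{1/2}$-seminorm of $\Psi$ it is controlled by $C\|f\|_{\mathcal C^\gamma}\sup_r\|\Psi_r\|_{L_{2m}}\llbracket\Psi\rrbracket_{\mathcal C^{1/2}_{[s,t]}L_{2m}}(t-s)^{3/2+H(\gamma-1)}$. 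The term $\mathcal J_1$ contains the mixed second difference $\EE^s[f(\EE^s\psi_r+\widetilde B_r)-f(\EE^s\phi_r+\widetilde B_r)-f(\EE^u\psi_r+\widetilde B_r)+f(\EE^u\phi_r+\widetilde B_r)]$ tested against $\EE^s\Psi_r$, and this is the heart of the matter. I would treat it as in \cite{GHR2024}/Proposition \ref{prop:reg2}: conditioning on $\mathcal F_s$, realise the integrand as a mixed second difference of the single smoothed map $F_s:=\EE^s f(\,\cdot\,+\widetilde B_r)$, whose derivatives obey $\|D^kF_s\|_\infty\le C\|f\|_{\mathcal C^\gamma}(r-s)^{H(\gamma-k)}$, and Taylor-expand so that only $\Psi$-increments and the $\psi$-conditioning increment $\EE^s\psi_r-\EE^u\psi_r$ survive -- the $\phi$-increment being eliminated by writing $\phi=\psi-\Psi$ and absorbing $\EE^s\phi_r-\EE^u\phi_r$ into the $\psi$- and $\Psi$-increments. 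Measuring the $\psi$-increment in $L_\infty$ by $\llbracket\psi\rrbracket_{\mathcal C^{1+H(\gamma\wedge0)}_{[s,t]}L_{1,\infty}}$ and the two $\Psi$-factors in $L_{2m}$ by $\sup_r\|\Psi_r\|_{L_{2m}}$, via a Hölder split $\|XYZ\|_{L_m}\le\|X\|_{L_{2m}}\|Y\|_{L_{2m}}\|Z\|_{L_\infty}$, the crucial point is that the singular Hessian weight $(r-s)^{H(\gamma-2)}$ is more than compensated by the increment weight $(r-s)^{1+H(\gamma\wedge0)}$, producing the benign power $(r-s)^{2+2H(\gamma-1)}$ with $2+2H(\gamma-1)>1$ \emph{precisely} because $\gamma>1-1/(2H)$.

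Assembling, \eqref{sts2} holds with $\Gamma_2\sim C\|f\|_{\mathcal C^\gamma}\big(\sup_r\|\Psi_r\|_{L_{2m}}\llbracket\Psi\rrbracket_{\mathcal C^{1/2}L_{2m}}+\llbracket\psi\rrbracket\sup_r\|\Psi_r\|_{L_{2m}}^2\big)$ at a time-power strictly above $1$, and Lemma \ref{lemSSL} gives $\|\mathcal A_t-\mathcal A_s\|_{L_m}\le\|A_{s,t}\|_{L_m}+C\Gamma_2(t-s)^{1+\varepsilon_1}$. I would then collect powers using $t-s\le1$ (so the $(t-s)^{2+2H(\gamma-1)}$ and the $\sup_r\|\Psi_r\|_{L_{2m}}^2$ part of the $(t-s)^{3/2+H(\gamma-1)}$ terms fold into $(t-s)^{1+H(\gamma-1)}$) together with Young's inequality $ab\le\tfrac12(a^2+b^2)$ on the cross term $\sup_r\|\Psi_r\|_{L_{2m}}\llbracket\Psi\rrbracket$ to split off the pure $\llbracket\Psi\rrbracket^2(t-s)^{3/2+H(\gamma-1)}$ contribution; this reproduces exactly the two terms of \eqref{eq:reg3}. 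I expect the main obstacle to be the mixed second-difference estimate in $\mathcal J_1$: one must invoke the second-order regularisation of $\widetilde B$ (the Hessian bound for the smoothed drift, obtainable from Lemma \ref{lemreg-B} as in \cite{GHR2024}) and organise the Taylor expansion so that the singular Hessian weight is absorbed and only the $\psi$ (not $\phi$) conditioning seminorm appears, which is exactly the delicate bookkeeping -- mixing $L_{2m}$ control of the $\Psi$-factors with $L_{1,\infty}$ control of the $\psi$-increment -- underlying Proposition \ref{prop:reg2}.
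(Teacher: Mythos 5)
Your proposal is correct and follows essentially the same route as the paper: the identical germ $A_{s,t}$ with both factors conditioned on $\mathcal F_s$, the same $\mathcal J_1+\mathcal J_2$ decomposition of $\EE^s\delta A_{s,u,t}$, Lemma \ref{lemreg-B}$(ii)$ for the germ and for $\mathcal J_2$, and the second-difference estimate (Lemma \ref{lemreg-B}$(iii)$) paired with the $L_{1,\infty}$ seminorm of $\psi$ and $L_{2m}$ control of $\psi-\phi$ for $\mathcal J_1$. The only discrepancies are cosmetic: you swap the roles of the labels \eqref{sts1}/\eqref{sts2}, and your intermediate exponent $2+2H(\gamma-1)$ differs slightly from the paper's, but both exceed $1$ under $\gamma>1-1/(2H)$, so nothing is affected.
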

\begin{proof}
Let $t_0 \leq S \leq T$ with $T-S \leq 1$. For $(s,t) \in \Delta_{[S,T]}$, let
\begin{align} \label{eq:Ast}
\begin{split}
  A_{s,t} & :=  \int_s^t e^{-c(T-r)} \langle \EE^s \psi_r-\EE^s \phi_r, \EE^{s} \Big( f(\EE^s \psi_r+\widetilde{B}_r)-f(\EE^s \phi_r+\widetilde{B}_r) \Big) \rangle \, dr  \\
  \mathcal{A}_{t} & := \int_{S}^{t} e^{-c(T-r)} \langle \psi_r-\phi_r , f(\psi_r+\widetilde{B}_r)-f(\phi_r+\widetilde{B}_r) \rangle dr  .
  \end{split}
\end{align}

Assume that the quantities that appear in the right-hand side of \eqref{eq:reg3} are finite, otherwise 
\eqref{eq:reg3} trivially holds. We check the conditions in order to apply the stochastic sewing lemma (Lemma \ref{lemSSL}). Using the Lipschitz norm of $f$, one can check that \eqref{sts3} holds. 

We show that \eqref{sts1} and \eqref{sts2} hold true with $\varepsilon_1=\varepsilon_2=1/2+H(\gamma-1)$.

By Lemma \ref{lemreg-B}~$(ii)$ and Jensen's inequality, we have
\begin{align}
| A_{s,t} | & \leq  C \int_s^t   \EE^s|\psi_r-\phi_r|   \Big| \EE^s \big( f(\EE^s\psi_r+\widetilde{B}_r)-f(\EE^s\phi_r+\widetilde{B}_r) \big) \Big| dr \nonumber \\
& \leq C \int_s^t  \big( \EE^s|\psi_r-\phi_r |^2 \big)  \| f \|_{\mathcal{C}^\gamma} (r-s)^{H(\gamma-1)} dr \nonumber .
\end{align}
Hence,
\begin{align}\label{eq:boundAst3}
\| A_{s,t} \|_{L_m} \leq C  \| f \|_{\mathcal{C}^\gamma} \sup_{r \in [S,T]} \| \psi_r-\phi_r \|_{L_{2m}}^2 (t-s)^{1+H(\gamma-1)}  .
\end{align}
Let $s \leq u \leq t$ and let us now provide the bound on $\EE^s \delta A_{s,u,t}$. By using the tower property of the conditional expectation, we have 
\begin{align*}
\EE^{s} \delta A_{s,u,t}  & =\EE^{s}  \int_u^t  e^{-c(T-r)} \langle \EE^s \psi_r-\EE^s \phi_r,\EE^s (f(\EE^s \psi_r+\widetilde{B}_r)-f(\EE^s \phi_r+\widetilde{B}_r)) -\EE^s (f(\EE^u \psi_r+\widetilde{B}_r) \\ & \quad -f(\EE^u \phi_r+\widetilde{B}_r)) \rangle \, dr \\
& \quad \EE^{s} \int_u^t  e^{-c(T-r)} \langle \EE^s \psi_r-\EE^s \phi_r-(\EE^u \psi_r-\EE^u \phi_r),\EE^u (f(\EE^u \psi_r+\widetilde{B}_r)-f(\EE^u \phi_r+\widetilde{B}_r)) \rangle \, dr  \\
& =: \mathcal{J}_1+\mathcal{J}_2 .
\end{align*}
Let us first bound $\mathcal{J}_2$. Using Lemma \ref{lemreg-B}~$(ii)$, we get
\begin{align*}
 \mathcal{J}_2  & \leq C \| f \|_{\mathcal{C}^\gamma}  \int_u^t  |\EE^s \phi_r-\EE^s \psi_r-(\EE^u \psi_r-\EE^u \phi_r)| |\EE^u \psi_r-\EE^u \phi_r|  (r-u)^{H(\gamma-1)} \, dr  \\
 & \leq C \| f \|_{\mathcal{C}^\gamma}  \int_u^t  \frac{|\EE^s \phi_r-\EE^s \psi_r-(\EE^u \psi_r-\EE^u \phi_r)|}{(r-s)^{1/2}} |\EE^u \psi_r-\EE^u \phi_r|  (r-u)^{H(\gamma-1)} (r-s)^{1/2} \, dr .
 \end{align*}
 By Young's inequality, we get
 \begin{align*}
 \mathcal{J}_2   \leq  C \| f \|_{\mathcal{C}^\gamma} \int_u^t \Big( \frac{|\EE^s \phi_r-\EE^s \psi_r-(\EE^u \psi_r-\EE^u \phi_r)|^2}{r-s} +|\EE^u \psi_r-\EE^u \phi_r|^2 \Big) (r-u)^{H(\gamma-1)} (t-s)^{1/2}.
 \end{align*}
 It follows by taking the moments and using the Minkowski inequality that
 \begin{align}\label{eq:J1}
 \| \mathcal{J}_2 \|_{L_m} \leq C \| f \|_{\mathcal{C}^\gamma}  \Big( \llbracket \psi-\phi \rrbracket_{\mathcal{C}^{1/2}_{[S,T]} L_{2m}}^2 + \sup_{r \in [S,T]} \| \psi_r-\phi_r \|_{L_{2m}}^2 \Big) (t-s)^{3/2+H(\gamma-1)}  .
 \end{align}
For $\mathcal{J}_1$, using Lemma \ref{lemreg-B}~$(iii)$ and conditional Jensen's inequality, we get
\begin{align*}
|\mathcal J_1| & \leq  \int_u^t  e^{-c(T-r)}  \EE^s| \psi_r-\phi_r| \, \EE^s \Big| \EE^u \big( f(\EE^s \psi_r+\widetilde{B}_r)-f(\EE^s \phi_r+\widetilde{B}_r)- f(\EE^u \psi_r+\widetilde{B}_r) \\ & \quad +f(\EE^u \phi_r+\widetilde{B}_r) \big) \Big| \, dr  \\
 & \leq C \|f \|_{\mathcal{C}^\gamma} \int_u^t  \Big( \EE^s| \phi_r- \psi_r |^2 \EE^s |\EE^s \psi_r-\EE^u \psi_r|  (r-u)^{H(\gamma-2)} \\ & \quad + \EE^s | \psi_r-\phi_r|   |\EE^s \psi_r-\EE^s\phi_r -\EE^u \psi_r + \EE^u \phi_r| (r-u)^{H(\gamma-1)} \Big) \, dr \\
 & \leq  C \|f \|_{\mathcal{C}^\gamma} \int_u^t  \Big( \EE^s| \phi_r- \psi_r |^2 \llbracket \psi \rrbracket_{\mathcal{C}^{1+H(\gamma \wedge 0)}_{[S,T]} L_{1,\infty}} (r-s)^{1+H(\gamma \wedge 0)} (r-u)^{H(\gamma-2)} \\ & \quad + \EE^s \big( | \psi_r- \phi_r|   |\EE^s \psi_r-\EE^s\phi_r -\EE^u \psi_r + \EE^u \phi_r| \big) (r-u)^{H(\gamma-1)} \Big) \, dr .
\end{align*}
Hence, by taking the moments, and using Minkowski's, Jensen's and Young's inequalities, we have
\begin{align*}
\| \mathcal J_1\|_{L_m} & \leq C  \| f \|_{\mathcal{C}^\gamma} \llbracket \psi \rrbracket_{\mathcal{C}^{1+H(\gamma \wedge 0)}_{[S,T]} L_{1,\infty}}  \sup_{r \in [S,T]} \| \psi_r-\phi_r \|_{L_{2m}}^2  (t-s)^{2+2 H(\gamma \wedge 0) + H(\gamma-2)}  \\
& \quad + C \| f \|_{\mathcal{C}^\gamma} \Big( \llbracket \psi-\phi \rrbracket_{\mathcal{C}^{1/2}_{[S,T]} L_{2m}}^2 + \sup_{r \in [S,T]} \| \psi_r-\phi_r \|_{L_{2m}}^2 \Big) (t-s)^{3/2+H(\gamma-1)} .
\end{align*}
Combining the bounds on $\mathcal{J}_1$ and $\mathcal{J}_2$, we deduce that
\begin{align}\label{eq:bound-deltaA-2}
\begin{split}
\| \EE^s \delta A_{s,u,t} \|_{L_m} & \leq C \| f \|_{\mathcal{C}^\gamma}  \Big( \llbracket \psi-\phi \rrbracket_{\mathcal{C}^{1/2}_{[S,T]} L_{2m}}^2 \\ & \quad + (1+\llbracket \psi \rrbracket_{\mathcal{C}^{1+H(\gamma \wedge 0)}_{[S,T]} L_{1,\infty}} )\sup_{r \in [S,T]} \| \psi_r-\phi_r \|_{L_{2m}}^2  \Big) (t-s)^{3/2+H(\gamma-1)} .
\end{split}
\end{align}
Combining \eqref{eq:boundAst3} and \eqref{eq:bound-deltaA-2}, we can apply Lemma \ref{lemSSL} and deduce \eqref{eq:reg3}.
\end{proof}

\subsection{A priori estimates}\label{sec:apriori}
We establish in this section estimates on the regularity of the solution when $b$ is a smooth drift, then we state stability estimates of solutions with respect to the drift and the initial condition. Throughout the whole section, we fix a filtered probability space $(\Omega,\mathcal{F},\mathbb{F},\mathbb{P})$, an 
$\mathbb{F}$-Wiener process $W$, and an fBm given via $W$ by \eqref{eq:MVN}. Whenever a solution is mentioned, it is meant to be with respect to this probability space and Wiener process.

\paragraph{Regularity estimates for smooth drift.}
We assume that $b \in \mathcal{C}^\infty$. For any $x \in \R^d$ and $t_0 \in \R$, we write $(X_t)_{t \ge t_0}$ for the solution of \eqref{eq:sde} starting from $x$ at time $t_0$, and prove estimates on $X$ that depend only on the Besov-H\"older norm of $b$.

The following proposition gives the Besov-H\"older regularity of the solution $X$.
\begin{prop}\label{prop:regZ}
Let $\gamma \in (1/2-1/2H,1)$ and $m \in [2,\infty)$.
Let $F,b$ satisfy Assumptions \ref{assumpF} and \ref{assumpb}, and further assume that $b \in \mathcal{C}^\infty$. There exists a constant $C:=C(m,\gamma,H,\kappa,\Xi)$ such that for any $x \in \R^d$, any $t_0 \in \R$, and any $t_0 \leq s<t$ with $t-s \leq 1$, we have 
\begin{align}\label{eq:regZ}
\llbracket X-B \rrbracket_{\mathcal{C}^{1+H(\gamma \wedge 0)}_{[s,t]} L_m} \leq 2 [X-B]_{\mathcal{C}^{1+H(\gamma \wedge 0)}_{[s,t]} L_m} \leq  C \Big( \sup_{r \in [s,t]} \|X_r\|_{L_m}+1 \Big).
\end{align}
\end{prop}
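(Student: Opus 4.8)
The plan is to prove the second inequality; the first, $\llbracket X-B\rrbracket_{\mathcal{C}^{1+H(\gamma\wedge0)}_{[s,t]}L_m}\le 2[X-B]_{\mathcal{C}^{1+H(\gamma\wedge0)}_{[s,t]}L_m}$, is immediate from \eqref{eq:compar-norms} applied to $\phi=X-B$. Write $Z:=X-B$ and $\alpha:=1+H(\gamma\wedge0)$. Since $b$ is smooth and $X$ solves \eqref{eq:sde}, for $s\le u<v\le t$ one has the clean increment formula $Z_v-Z_u=\int_u^v F(X_r)\,dr+\int_u^v b(X_r)\,dr$, so it suffices to bound the $L_m$-norms of these two integrals by $C(1+\sup_r\|X_r\|_{L_m})(v-u)^\alpha$. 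If $\sup_{r\in[s,t]}\|X_r\|_{L_m}=\infty$ the claim is vacuous, so I assume it finite; then the crude bounds $\|\int_u^v F(X_r)\,dr\|_{L_m}\le\kappa_3(v-u)(1+\sup_r\|X_r\|_{L_m})$ (from the linear growth \eqref{eq:growth}) and $\|\int_u^v b(X_r)\,dr\|_{L_m}\le\|b\|_\infty(v-u)$ already force $[Z]_{\mathcal{C}^\alpha_{[s,t]}L_m}<\infty$, which I shall need in order to absorb terms later. When $\gamma\ge0$ we have $\alpha=1$, $b$ is bounded, and these two crude bounds directly yield the result; so from now on I focus on $\gamma<0$, where $\alpha=1+H\gamma\in(0,1)$.

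For the drift $F$ the linear-growth bound gives $\|\int_u^v F(X_r)\,dr\|_{L_m}\le C(1+\sup_r\|X_r\|_{L_m})(v-u)^\alpha$, using $v-u\le1$ and $\alpha\le1$. The singular term is where the regularisation estimate enters: I apply Proposition \ref{prop:reg1} with $f=b$, $\tau=\alpha$ and $\psi_r:=X_r-\widetilde{B}_r$, so that $\psi_r+\widetilde{B}_r=X_r$. The choice $\tau=\alpha$ is exactly admissible: the requirement $H(\gamma-1)+\tau>0$ reads $1-H+2H\gamma>0$, i.e. $\gamma>1/2-1/(2H)$, which is precisely the standing hypothesis, while $\tau=1+H\gamma\in(0,1)$ holds since $-1/H<\gamma<0$. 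Moreover, by \eqref{eq:decomp-fBm} one has $X_r-\widetilde{B}_r=Z_r+B_{t_0}+\bar B^{t_0}_r$, differing from $Z_r$ only by the $\mathcal{F}_{t_0}$-measurable (hence $\mathcal{F}_u$-measurable) history, which cancels in the conditional seminorm \eqref{eq:semi-norm-2}; thus $\llbracket\psi\rrbracket_{\mathcal{C}^\tau_{[u,v]}L_m}=\llbracket Z\rrbracket_{\mathcal{C}^\alpha_{[u,v]}L_m}$. Proposition \ref{prop:reg1} then gives
\begin{align*}
\Big\|\int_u^v b(X_r)\,dr\Big\|_{L_m}\le C\|b\|_{\mathcal{C}^\gamma}(v-u)^\alpha+C\|b\|_{\mathcal{C}^\gamma}\,\llbracket Z\rrbracket_{\mathcal{C}^\alpha_{[u,v]}L_m}\,(v-u)^{\alpha+\delta},\qquad \delta:=1+H(\gamma-1)>0,
\end{align*}
where $\delta>0$ again because $\gamma>1/2-1/(2H)>1-1/H$ (using $H<1$).

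Combining the two bounds, dividing by $(v-u)^\alpha$, replacing $\llbracket Z\rrbracket_{\mathcal{C}^\alpha_{[u,v]}L_m}$ by $2[Z]_{\mathcal{C}^\alpha_{[s',t']}L_m}$ via \eqref{eq:compar-norms} on a sub-interval $[s',t']\supseteq[u,v]$, and using $(v-u)^\delta\le(t'-s')^\delta$, I obtain for every $[s',t']\subseteq[s,t]$
\begin{align*}
[Z]_{\mathcal{C}^\alpha_{[s',t']}L_m}\le C\big(1+\sup\nolimits_{r}\|X_r\|_{L_m}\big)+2C\,\Xi\,(t'-s')^\delta\,[Z]_{\mathcal{C}^\alpha_{[s',t']}L_m}.
\end{align*}
Setting $h:=(4C\Xi)^{-1/\delta}\wedge1$ makes the last coefficient $\le1/2$ whenever $t'-s'\le h$; since $[Z]_{\mathcal{C}^\alpha_{[s',t']}L_m}$ is finite by the a priori step, I absorb it and get $[Z]_{\mathcal{C}^\alpha_{[s',t']}L_m}\le 2C(1+\sup_r\|X_r\|_{L_m})$ on every interval of length at most $h$.

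It remains to glue. Because $t-s\le1$, any $[u,v]\subseteq[s,t]$ splits into at most $N_0:=\lceil1/h\rceil$ consecutive pieces of length $\le h$; telescoping $Z_v-Z_u$ over these pieces and using the small-interval bound yields $\|Z_v-Z_u\|_{L_m}\le 2CN_0(1+\sup_r\|X_r\|_{L_m})(v-u)^\alpha$, where one treats the regimes $v-u\le h$ (direct) and $v-u>h$ (so $(v-u)^\alpha\ge h^\alpha$ dominates the $N_0$ contributions of size $h^\alpha$) separately. Taking the supremum over $(u,v)\in\Delta_{[s,t]}$ gives the claim with $C=C(m,\gamma,H,\kappa,\Xi)$. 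The delicate point is the interaction of the last two steps: the self-referential appearance of $\llbracket Z\rrbracket$ on the right of Proposition \ref{prop:reg1} forces the localisation-and-absorption onto short intervals, while the failure of sub-additivity of the $\mathcal{C}^\alpha$ seminorm for $\alpha<1$ under concatenation is exactly why the hypothesis $t-s\le1$ (bounding the number of pieces) is indispensable in the gluing.
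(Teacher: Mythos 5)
Your proposal is correct and follows essentially the same route as the paper's proof: the increment decomposition into the $F$-term (handled by linear growth) and the $b$-term (handled by Proposition \ref{prop:reg1} with $\tau=1+H(\gamma\wedge0)$ and $\psi=X-\widetilde{B}^{t_0}$, using that the history is $\mathcal{F}_{t_0}$-measurable so the conditional seminorms coincide), followed by an a priori finiteness check, absorption on short intervals, and gluing via sub-additivity of the H\"older seminorm. The only cosmetic differences are that you justify finiteness by noting the claim is vacuous otherwise (the paper instead invokes uniform moment bounds for the smooth-drift equation) and that you separate the cases $\gamma\ge 0$ and $\gamma<0$ explicitly.
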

\begin{proof}
Recall that the first inequality is always satisfied by the definition of the semi-norms in \eqref{eq:semi-norm} and \eqref{eq:semi-norm-2}.
Let $t_0 \leq S<T$ with $T-S \leq 1$ and let $(s,t) \in \Delta_{[S,T]}$, then
\begin{align*}
X_t-B_t - (X_s-B_s) = \int_s^t F(X_r)dr + \int_s^t b(X_r) dr .
\end{align*}

First, we prove that $[X-B]_{\mathcal{C}^{1+H(\gamma \wedge 0)}_{[S,T]} L_m}$ is finite. Using the growth of $F$ and that $b$ is bounded, we have
\begin{align*}
\| X_t-B_t - (X_s-B_s) \|_{L_m} & \leq C \int_s^t (1+\| X_r\|_{L_m} ) dr +  \| b \|_{\infty} (t-s) .
\end{align*}
Moreover, since $b$ is smooth, $F+b$ still satisfies \eqref{eq:Fdissipative} and \eqref{eq:growth}, hence the moments of $X$ are bounded uniformly in time. Dividing by $(t-s)^{1+H(\gamma \wedge 0)}$ and taking the supremum of $(s,t)$ over $\Delta_{[S,T]}$, it follows that $[X-B]_{\mathcal{C}^{1+H(\gamma \wedge 0)}_{[S,T]} L_m}$ is indeed finite.

We can now prove \eqref{eq:regZ}.
Using the growth of $F$ again, it follows that
\begin{align*}
\| X_t-B_t - (X_s-B_s) \| _{L_m} & \leq C \Big( \int_s^t(1+ \|X_r\|_{L_m}) dr + \| \int_s^t b(X_r) dr \|_{L_m} \Big) .
\end{align*}
Applying Proposition \ref{prop:reg1} with $\tau=1+H(\gamma \wedge 0)$, $\psi = X-\widetilde{B}^{t_0}$ and noticing by \eqref{eq:compar-norms} that
\begin{align}\label{eq:link-holdernorms}
\llbracket X-\widetilde{B}^{t_0} \rrbracket_{\mathcal{C}^{1+H(\gamma \wedge 0)}_{[S,T]} L_m} = \llbracket X-B \rrbracket_{\mathcal{C}^{1+H(\gamma \wedge 0)}_{[S,T]} L_m}  \leq 2 [X-B]_{\mathcal{C}^{1+H(\gamma \wedge 0)}_{[S,T]} L_m},
\end{align}
we get 
\begin{align*}
\| X_t-B_t - (X_s-B_s) \|_{L_m} 
& \leq C \Big( \sup_{r \in [S,T]} \|X_r\|_{L_m} (t-s)+ (t-s)  \\ & \quad +  [X-B]_{\mathcal{C}^{1+H\gamma}_{[S,T]} L_m}(t-s)^{2+H(\gamma -1)+H (\gamma \wedge 0)} +    (t-s)^{1+H (\gamma \wedge 0)} \Big)  .
\end{align*}
Dividing by $(t-s)^{1+H (\gamma \wedge 0)}$ and taking the supremum over $(s,t) \in \Delta_{[S,T]}$, we get
\begin{align*}
 [X-B]_{\mathcal{C}^{1+H (\gamma \wedge 0)}_{[S,T]} L_m} & \leq C \Big( [X-B]_{\mathcal{C}^{1+H(\gamma \wedge 0)}_{[S,T]} L_{m,\infty}} (T-S)^{1+H(\gamma-1)} \\ & \quad +  \sup_{r \in [S,T]} \|X_r\|_{L_m} (T-S)^{-H (\gamma \wedge 0)}+1 \Big)   .
\end{align*}
Let $\ell := \left( 2 C  \right)^{-1/(1+H(\gamma-1))}$. Then for $T-S \leq \ell$, we have $C (T-S) \leq 1/2$. It follows that for $T-S \leq \ell$
\begin{align*}
[X-B]_{\mathcal{C}^{1+H(\gamma \wedge 0)}_{[S,T]} L_m} & \leq C \Big( \sup_{r \in [S,T]} \|X_r\|_{L_m} (T-S)^{-H (\gamma \wedge 0)}+1 \Big).
\end{align*}
Applying the above inequality over a finite number of intervals of size $\ell$, and using the sub-additivity of the H\"older semi-norm, we get the desired result.
\end{proof}

In the next proposition, we state how the moments of $X$ behave in time, the result implies in particular that these moments are bounded uniformly in time if $\kappa_1>0$.
\begin{prop}\label{prop:unif-moments}
Let $\gamma \in (1/2-1/(2H),1)$ and  $m \in [2,\infty)$.
Let $F,b$ satisfying Assumptions \ref{assumpF} and \ref{assumpb}, and further assume that $b \in \mathcal{C}^\infty$. There exists a constant $C := C(m,\gamma,H,x,\kappa,\Xi)$, such that for any $x \in \R^d$, any $T \in \R$, any $t_0 < T$, we have
\begin{align*}
\sup_{r \in [t_0,T]} \|X_r\|_{L_m} \leq C \Big(1+ (T-t_0) \wedge \frac{1-e^{-\kappa_1(T-t_0)}}{\kappa_1}  \Big) .
\end{align*}
\end{prop}
\begin{proof}
Let $U$ denote the Ornstein-Uhlenbeck process defined in \eqref{eq:U} starting at $U_{t_0}=x$. In particular, $U$ is a Gaussian process and the variance of $U_t$ is bounded uniformly in $t$. This implies that
\begin{align}\label{eq:unif-U}
\sup_{t \in [t_0,\infty)} \| U_t \|_{L_m} \leq C.
\end{align}
Moreover, for any $t_0< s <t$ with $t-s \leq 1$, we have $U_t-B_t-(U_s-B_s)=-\int_s^t U_r dr$, which implies that
\begin{align}\label{eq:holder-U}
[U-B]_{\mathcal{C}^{1}_{[s,t]} L_m} \leq C. 
\end{align}
Let $t \ge t_0$, since the processes $X-B$ and $U-B$ are almost surely differentiable, we write

\begin{align*}
\frac{d}{d t}\left|X_t-U_t\right|^{2} & = 2 \langle X_t-U_t,F\left(X_t\right)+U_t+b\left(X_t\right)\rangle  \\
& =2\langle X_t-U_t,F\left(X_t\right)-F(U_t) \rangle +2 \langle X_t-U_t, F(U_t)+U_t\rangle + \langle X_t-U_t, b\left(X_t\right)\rangle .
\end{align*}
Using that $F$ has linear growth, \eqref{eq:Fdissipative} and Young's inequality, it follows that
\begin{align*}
\frac{d}{d t}\left|X_t-U_t\right|^{2} \leq & -2\kappa_1 |X_t-U_t|^2+2\kappa_2+ 2\langle X_t-U_t, F(U_t)+U_t\rangle +2 \langle X_t-U_t, b\left(X_t\right)\rangle \\
\leq & -\kappa_1 |X_t-U_t|^{2}+2\kappa_2  +C(\kappa_3+1)\left(|U_t|^{2}+1\right)+2\langle X_t-U_t, b(X_t)\rangle \\
\leq & -\kappa_1\left|X_t-U_t\right|^{2} +C\left(|U_t|^{2}+1\right)+2\langle X_t-U_t, b\left(X_t\right)\rangle  \\
& \leq -\kappa_1 |X_t-U_t|^2 + C(|U_t|^{2}+1) +C \langle X_t-U_t, b(X_t)\rangle .
\end{align*}
Using a Gr\"onwall-type argument and assuming that $X$ and $U$ start at the same point, we get that
\begin{align*}
|X_t-U_t|^2 & \leq C \Big( 1+ \int_{t_0}^t e^{-\kappa_1(t-r)} |U_r|^2 dr + \int_{t_0}^t e^{-\kappa_1(t-r)} \langle X_r-U_r, b\left(X_r\right)\rangle dr \Big) .
\end{align*}
Let $\delta \in (0,1)$ to be determined below. Letting $(t_k)_{k \in \llfloor 0, N \rrfloor}$ be a partition of $[{t_0},t]$ such that $t_{k+1}-t_k =\delta < 1$ and $N= \lfloor (t-t_0)/\delta \rfloor$. By using \eqref{eq:unif-U}, we have
\begin{align}\label{eq:ou-comp}
\begin{split}
\| X_t-U_t \|_{L_{2m}}^2 & \leq C  \Big( 1+ C_0(\kappa_1,t-t_0)  \\ & \quad + \sum_{k=0}^{N-1} e^{-\kappa_1 (t-t_{k+1})} \| \int_{t_k}^{t_{k+1}} e^{-\kappa_1 (t_{k+1}-r)} \langle X_r-U_r, b\left(X_r\right)\rangle dr \|_{L_m} \\ & + \| \int_{t_N}^t e^{-\kappa_1 (t-r)} \langle X_r-U_r, b\left(X_r\right)\rangle dr \Big)  ,
\end{split}
\end{align}
where
\begin{align}\label{eq:defC0}
    C_0(\kappa_1,t-t_0) := \int_{t_0}^{t} e^{-\kappa_1 (t-r)} dr= \begin{cases}
    \kappa_1^{-1}(1-e^{-\kappa_1 (t-t_0)}) \, &\text{if  $\kappa_1 \neq 0$} \\
    t-t_0 \, &\text{if $\kappa_1=0$} .
    \end{cases}
\end{align}
Let us write $t_{N+1} = t$. Applying Proposition \ref{prop:reg4} with $\phi=X-U$, $\psi=X-\widetilde{B}^{t_0}$, noticing that 
$$ \llbracket X-\widetilde{B}^{t_0} \rrbracket_{\mathcal{C}^{1+H(\gamma \wedge 0)}_{[t_{k},t_{k+1}]}L_{2m}}  = \llbracket X-B \rrbracket_{\mathcal{C}^{1+H(\gamma \wedge 0)}_{[t_{k},t_{k+1}]}L_{2m}},$$ 
and using \eqref{eq:compar-norms}, we get for any $k \in \llfloor 0, N \rrfloor$,
\begin{align}\label{eq:ou-comp2}
\begin{split}
& \| \int_{t_k}^{t_{k+1}} e^{-\kappa_1 (t_{k+1}-r)} \langle X_r-U_r, b\left(X_r\right)\rangle dr \|_{L_m}  \\ & \leq  C \| b\|_{\mathcal{C}^\gamma}   \Big( \sup_{r \in [t_0,t]} \| X_r-U_r \|_{L_m} \delta^{1+H(\gamma \wedge 0)} \\ & \quad +  \Big( \sup_{r \in [t_0,t]} \| X_r-U_r \|_{L_{2m}} [ X-B ]_{\mathcal{C}^{1+H(\gamma \wedge 0) }_{[t_{0},t]}L_{2m}}  + [ X-U]_{\mathcal{C}^{1+H(\gamma \wedge 0) }_{[t_{0},t]}L_m} \Big)  \delta^{2+2H(\gamma \wedge 0)} \Big) . 
\end{split}
\end{align}
By Proposition \ref{prop:regZ}, we have that 
\begin{align}\label{eq:resultprop-holder}
[ X-B ]_{\mathcal{C}^{1+H(\gamma \wedge 0) }_{[t_{0},t]}L_{2m}}  \leq C (1+\sup_{r \in [t_0,t]}\|X\|_{L_{2m}} ) .
\end{align} 
Moreover, writing $X-U=X-B+B-U$ and using \eqref{eq:regZ} and \eqref{eq:holder-U}, one gets
\begin{align}\label{eq:resultprop-holderU}
[X-U]_{\mathcal{C}^{1+H(\gamma \wedge 0) }_{[t_{0},t]}L_{m}} \leq C (1+\sup_{r \in [t_0,t]}\|X\|_{L_m} + [U-B]_{\mathcal{C}^{1+H(\gamma \wedge 0) }_{[t_{0},t]}L_{m}} )\leq C(1+\sup_{r \in [t_0,t]}\|X\|_{L_m}).
\end{align}
Notice that $\sum_{k=0}^{N-1} e^{-\kappa_1 (t-t_{k+1})} \lesssim \delta^{-1} C_0(\kappa_1,T-t_0)$. Hence, plugging \eqref{eq:ou-comp2} back in \eqref{eq:ou-comp} and using \eqref{eq:unif-U}, \eqref{eq:resultprop-holder} and \eqref{eq:resultprop-holderU}, we get
\begin{align*}
\| X_t-U_t \|_{L_{2m}}^2 & \leq C \big(1+C_0(\kappa_1,T-t_0) \big)+ C C_0(\kappa_1,T-t_0)  \Big( \sup_{r \in [t_0,t)}\| X_r-U_r \|_{L_{2m}} \delta^{H(\gamma \wedge 0)} \\ & \quad + \Big( \sup_{r \in [t_0,t]}\| X_r-U_r\|_{L_{2m}}   (1+\sup_{r \in [t_0,t]}\|X_r\|_{L_{2m}} ) +1+\sup_{r \in [t_0,t]}\|X_r\|_{L_m}  \Big)  \delta^{1+2H(\gamma \wedge 0)} \Big).
\end{align*}
By taking the supremum over $t$ in $[t_0,T]$, and using that $\|X_r\|_{L_m} \leq \|X_r\|_{L_{2m}} \leq \|X_r-U_r\|_{L_{2m}}+C$ (by \eqref{eq:unif-U}), we get
\begin{align*}
\sup_{r \in [t_0,T]} \| X_r-U_r \|_{L_{2m}}^2 & \leq  C (1+C_0(\kappa_1,T-t_0))  \Big(1+\sup_{r \in [t_0,T]}\| X_r-U_r \|_{L_{2m}} \delta^{H(\gamma \wedge 0)}  \\ & \quad +  \sup_{r \in [t_0,T]}\| X_r-U_r\|_{L_{2m}}^{2} \delta^{1+2H(\gamma \wedge 0)} \Big).
\end{align*}
Recall that $1+2H(\gamma \wedge 0)>0$. By choosing $\delta =(2C(1+C_0(\kappa_1,T-t_0))  )^{-\frac{1}{1+2H(\gamma \wedge 0)}}$, so that $CC_0(\kappa_1,T-t_0)  \delta^{1+2H(\gamma \wedge 0)} < 1/2$, we get that
\begin{align*}
\sup_{r \in [t_0,T]} \| X_r-U_r \|_{L_{2m}}^2 & \leq C (1+C_0(\kappa_1,T-t_0))  \Big(1+\sup_{r \in [t_0,T]}\| X_r-U_r \|_{L_{2m}}  \delta^{H(\gamma \wedge 0)} \Big).
\end{align*}
By using Young's inequality, rearranging, \eqref{eq:unif-U} and the definition of $C_0$, we conclude the proof.
\end{proof}

We prove in the following proposition another estimate for the solution $X$ when $F$ is Lipschitz. This will be useful in the proof of uniqueness.
\begin{prop}\label{prop:regZ-infty}
Let $\gamma \in (1/2-1/(2H),1)$.
Let $b$ satisfy Assumption \ref{assumpb}, and further assume that $b \in \mathcal{C}^\infty$. 
Let $F$ satisfy Assumption \ref{Flip}. There exists a constant $C:=C(m,\gamma,H,L_F,\Xi)$ such that for any $x \in \R^d$, any $t_0 \in \R$, any $t_0 \leq S<T$ with $T-S \leq 1$, we have
\begin{align}\label{eq:regZ-infty}
\llbracket X-B \rrbracket_{\mathcal{C}^{1+H(\gamma \wedge 0)}_{[S,T]} L_{m,\infty}} \leq  C .
\end{align}
\end{prop}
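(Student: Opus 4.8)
The plan is to follow the proof of Proposition \ref{prop:regZ} closely, but to track the conditional norm $\|\cdot\|_{L^{\mathcal{F}_s}_{m,\infty}}$ in place of $\|\cdot\|_{L_m}$, and to exploit that $F$ is Lipschitz (Assumption \ref{Flip}) rather than merely of linear growth. Write $\phi:=X-B$ and $\alpha:=1+H(\gamma\wedge0)$. Since $\phi_r-\phi_s=\int_s^rF(X_u)\,du+\int_s^rb(X_u)\,du$, applying $\EE^s$ and subtracting gives
\[
\phi_r-\EE^s\phi_r=\int_s^r\big(F(X_u)-\EE^sF(X_u)\big)\,du+\Big(\int_s^rb(X_u)\,du-\EE^s\!\int_s^rb(X_u)\,du\Big).
\]
The essential new feature, compared with Proposition \ref{prop:regZ}, is that the bound must not depend on $\sup_r\|X_r\|_{L_m}$; indeed the ordinary increment $\phi_r-\phi_s$ carries the term $F(X_s)(r-s)$ with $\|X_s\|_{L_\infty(\Omega)}=\infty$, so $[X-B]_{\mathcal{C}^\alpha_{[S,T]}L_{m,\infty}}=\infty$ and the route through \eqref{eq:compar-norms} used in Proposition \ref{prop:regZ} is unavailable. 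The gain is achieved because the conditional centering above annihilates the $\mathcal{F}_s$-measurable history $\bar B^{s}$ of the fractional Brownian motion, leaving only innovation fluctuations.

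I would estimate the two terms separately. For the $b$-term, by conditional Jensen one has $\|\int_s^rb(X_u)du-\EE^s\!\int_s^rb(X_u)du\|_{L^{\mathcal{F}_s}_{m,\infty}}\le 2\|\int_s^rb(X_u)du\|_{L^{\mathcal{F}_s}_{m,\infty}}$, and I apply Proposition \ref{prop:reg1} with $q=\infty$, $f=b$, $\psi=X-\widetilde{B}^{t_0}$ (so that $\psi_u+\widetilde{B}_u=X_u$) and $\tau=\alpha$, using that for $s\ge t_0$ the history $\bar B^{t_0}$ drops out of the conditional seminorm, whence $\llbracket X-\widetilde{B}^{t_0}\rrbracket_{\mathcal{C}^{\alpha}_{[s,r]}L_{m,\infty}}=\llbracket\phi\rrbracket_{\mathcal{C}^{\alpha}_{[s,r]}L_{m,\infty}}$; for $\gamma\ge0$ one instead uses $\|b\|_\infty\le\Xi$ directly. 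For the $F$-term, using that $F(\EE^sX_u)$ is $\mathcal{F}_s$-measurable and that $Y\mapsto \EE^sY$ is an $L^{\mathcal{F}_s}_{m,\infty}$-contraction, I bound $\|F(X_u)-\EE^sF(X_u)\|_{L^{\mathcal{F}_s}_{m,\infty}}\le 2L_F\|X_u-\EE^sX_u\|_{L^{\mathcal{F}_s}_{m,\infty}}$. The key identity is $X_u-\EE^sX_u=(\phi_u-\EE^s\phi_u)+\widetilde{B}^{s}_u$, where the history $\bar B^{s}_u$ has cancelled and $\|\widetilde{B}^{s}_u\|_{L^{\mathcal{F}_s}_{m,\infty}}=\|\widetilde{B}^{s}_u\|_{L_m}\le C(u-s)^H$ since the innovation is independent of $\mathcal{F}_s$. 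Hence
\[
\|F(X_u)-\EE^sF(X_u)\|_{L^{\mathcal{F}_s}_{m,\infty}}\le 2L_F\big(\llbracket\phi\rrbracket_{\mathcal{C}^{\alpha}_{[s,u]}L_{m,\infty}}(u-s)^{\alpha}+C(u-s)^{H}\big),
\]
and integrating over $u\in[s,r]$ (Minkowski) controls the $F$-contribution.

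Assembling the two estimates, dividing by $(r-s)^{\alpha}$ and taking the supremum over $\Delta_{[S,T]}$ yields a self-referential bound of the form $\llbracket\phi\rrbracket_{\mathcal{C}^{\alpha}_{[S,T]}L_{m,\infty}}\le K\,\llbracket\phi\rrbracket_{\mathcal{C}^{\alpha}_{[S,T]}L_{m,\infty}}(T-S)^{\rho}+K$, where $\rho:=1+H(\gamma-1)>0$ (positive since $\gamma>1/2-1/(2H)>1-1/H$) and $K=K(m,\gamma,H,L_F,\Xi)$; the remaining powers of $(T-S)$, namely $(r-s)^{1}$ from the Lipschitz part and $(r-s)^{H(1-(\gamma\wedge0))}$ from the innovation part, are all positive and bounded since $T-S\le1$. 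For $T-S\le\ell$ with $\ell$ small enough that $K\ell^{\rho}\le1/2$ I can absorb the first term and conclude $\llbracket\phi\rrbracket_{\mathcal{C}^{\alpha}_{[S,T]}L_{m,\infty}}\le 2K$.

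Two points require genuine care and constitute the main obstacles. First, a priori finiteness of $\llbracket\phi\rrbracket_{\mathcal{C}^{\alpha}_{[S,T]}L_{m,\infty}}$ (needed before any absorption): since $b\in\mathcal{C}^\infty$ is bounded and $F$ is Lipschitz, this can be obtained either by a direct Gr\"onwall inequality on the centered quantity $u\mapsto\|\phi_u-\EE^s\phi_u\|_{L^{\mathcal{F}_s}_{m,\infty}}$ (whose closure is legitimate because, as above, only the innovation enters the right-hand side), or, more transparently, by conditioning via the disintegration result of Appendix \ref{app:A}: conditionally on $\mathcal{F}_s$ the dynamics reduce to the same SDE with frozen deterministic history $\bar B^{s}$ and initial datum $X_s$, driven by a fresh Wiener process, and the Lipschitz flow of this equation depends on the innovation with a constant $e^{L_F(r-s)}\le e^{L_F}$ uniform in the (unbounded) parameters $\bar B^{s}$ and $X_s$, so the central moments are finite and bounded uniformly. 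Second, and this is the hardest step, I must pass from intervals of length $\ell$ to arbitrary $T-S\le1$; since the easy concatenation through the ordinary seminorm $[\cdot]_{L_{m,\infty}}$ is not available here, I would telescope $\phi_r-\EE^s\phi_r=\sum_i(\EE^{u_{i+1}}\phi_r-\EE^{u_i}\phi_r)$ along a partition of mesh $\le\ell$ and apply the conditional Burkholder--Davis--Gundy inequality to the martingale $i\mapsto\EE^{u_i}\phi_r$ (this is where $m\ge2$ is used), controlling each increment by the local seminorms already bounded by $2K$; finitely many such doublings, their number depending only on $\ell$ and hence on $(m,\gamma,H,L_F,\Xi)$, give \eqref{eq:regZ-infty}.
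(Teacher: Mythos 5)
Your proposal is correct and follows essentially the same route as the paper's proof: the same decomposition of $X_t-B_t-\EE^s(X_t-B_t)$ into a Lipschitz $F$-part (controlled via $X_u-\EE^sX_u=(\phi_u-\EE^s\phi_u)+(B_u-\EE^sB_u)$ with the innovation contributing $(u-s)^H$) and a $b$-part handled by conditional Jensen plus Proposition \ref{prop:reg1} with $\tau=1+H(\gamma\wedge0)$, the same a priori finiteness via Gr\"onwall on the centered quantity, and the same absorption of the self-referential term on intervals of length $\ell$. The only point of divergence is the final patching from length-$\ell$ windows to $T-S\le1$, where the paper simply invokes sub-additivity of the H\"older-like seminorm while you spell out a telescoping/conditional martingale argument; your extra care there addresses a step the paper leaves implicit but does not change the substance of the proof.
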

\begin{proof}
Let $(s,t) \in \Delta_{[S,T]}$, then
\begin{align*}
X_t-B_t - (\EE^s X_t-\EE^s B_t) = \int_s^t F(X_r)-\EE^s F(X_{r}) dr + \int_s^t b(X_r)-\EE^s b(X_r) dr .
\end{align*}
\paragraph{Proving that $\llbracket X-B \rrbracket_{\mathcal{C}^{1+H(\gamma \wedge 0)}_{[S,T]} L_{m,\infty}}$ is finite.}
First, let us show that the quantity that appears on the left-hand side of \eqref{eq:regZ-infty} is finite. Using that $F$ is Lipschitz continuous and that $b$ is bounded, we have
\begin{align*}
| X_t-B_t - (\EE^s X_t-\EE^s B_t) |& \leq C \int_s^t  |X_r-\EE^{s} X_r| dr +  \| b \|_{\infty} (t-s) \\
& \leq C \int_s^t  |X_r-B_r-\EE^{s} X_r+\EE^sB_r| dr +\int_s^t |B_r-\EE^s B_r| dr+   \| b \|_{\infty} (t-s)
\end{align*}
Since $B_r-\EE^s B_r$ is a Gaussian variable independent of $\mathcal{F}_s$ with variance proportional to $(r-s)^{2H}$, it follows that
\begin{align*}
\EE^s| X_t-B_t - (\EE^s X_t-\EE^s B_t) | & \leq  C \int_s^t  \EE^s |X_r-B_r-\EE^{s} X_r+\EE^sB_r| dr +(t-s)^{1+H}+   \| b \|_{\infty} (t-s) .
\end{align*}
Gr\"onwall's lemma implies that
\begin{align*}
\EE^s| X_t-B_t - (\EE^s X_t-\EE^s B_t) | \leq C(T) (\| b \|_{\infty}+1)  (t-s) .
\end{align*}
 Dividing by $(t-s)^{1+H(\gamma \wedge 0)}$ and taking the supremum of $(s,t)$ over $\Delta_{[S,T]}$, we conclude that $\llbracket X-B \rrbracket_{\mathcal{C}^{1+H(\gamma \wedge 0)}_{[S,T]} L_{m,\infty}}$ is indeed finite. 
\paragraph{Proving \eqref{eq:regZ-infty}.}
Using similar arguments as above, one can show that
\begin{align*}
\| X_t-B_t - \EE^s (X_t-B_t) \|_{L_{m,\infty}^{\mathcal{F}_s}} & \leq  C \int_s^t  \| X_t-B_t - \EE^s (X_t-B_t) \|_{L_{m,\infty}^{\mathcal{F}_s}}  dr +(t-s)^{1+H} \\ & \quad + \Big\|  \int_s^t  b(X_r)- \EE^s b(X_r) dr  \Big\|_{L^{\mathcal{F}_s}_{m,\infty}} \\
& \leq  C \int_s^t \| X_t-B_t - \EE^s (X_t-B_t) \|_{L_{m,\infty}^{\mathcal{F}_s}}  dr +(t-s)^{1+H} \\ & \quad + 2 \left\| \int_s^t  b(X_r) dr \right\|_{L_{m,\infty}^{\mathcal{F}_s}}.
\end{align*}
Applying Proposition \ref{prop:reg1} with $\tau=1+H(\gamma \wedge 0)$, $(\psi_t)_t=(X_t-\widetilde{B}_{t-t_0})_{t \ge t_0}$ and using \eqref{eq:link-holdernorms}, we get 
\begin{align*}
\| X_t-B_t - \EE^s (X_t-B_t) \|_{L_{m,\infty}^{\mathcal{F}_s}} 
& \leq C \Big( \sup_{r \in [S,T]} \| X_r-B_r - \EE^s (X_r-B_r)\|_{L_{m,\infty}^{\mathcal{F}_s}} (t-s) \\ & \quad + (t-s)+  \llbracket X-B \rrbracket_{\mathcal{C}^{1+H(\gamma \wedge 0)}_{[S,T]} L_{m,\infty}}(t-s)^{2+H(\gamma -1)+H (\gamma \wedge 0)} \\ & \quad +    (t-s)^{1+H (\gamma \wedge 0)} \Big)  .
\end{align*}
Divide by $(t-s)^{1+H (\gamma \wedge 0)}$ and take the supremum over $(s,t) \in \Delta_{[S,T]}$ to get
\begin{align*}
 \llbracket X-B \rrbracket_{\mathcal{C}^{1+H (\gamma \wedge 0)}_{[S,T]} L_{m,\infty}} & \leq C \Big( \llbracket X-B \rrbracket_{\mathcal{C}^{1+H(\gamma \wedge 0)}_{[S,T]} L_{m,\infty}} (T-S)^{1+H(\gamma-1)}  +(T-S)^{-H  (\gamma \wedge 0)} + 1 \Big).
\end{align*}
Let $\ell := \left( 2 C \right)^{-1/(1+H(\gamma-1))}$, then for $T-S \leq \ell$, we have $C (T-S) \leq 1/2$. It follows that for $T-S \leq \ell$
\begin{align*}
\llbracket X-B \rrbracket_{\mathcal{C}^{1+H(\gamma \wedge 0)}_{[S,T]} L_{m,\infty}} & \leq C .
\end{align*}
Repeating the arguments over a finite number of intervals of size $\ell$, and using the sub-additivity of the H\"older-like semi-norm we get the desired result.
\end{proof}

We assume now that $b \in \mathcal{C}^\gamma$ and that $F$ is Lipschitz continuous and satisfies \eqref{eq:Fdissipative} with $\kappa_2=0$. For any $m \in [2,\infty)$, we define $\mathcal{V}(m)$ to be the class
\begin{align}\label{eq:classVm}
 \mathcal{V}(m) = \{ Y: \, [Y-B ]_{\mathcal{C}^{1/2}_{[t_0,T]} L_{m}} < \infty \} .
\end{align}
In particular $\mathcal{V}(2)$ coincides with the class $\mathcal{V}$ defined in \eqref{eq:class-uniqueness}. 
The rest of this section is divided into two parts, we first study the stability of the SDE with respect to the drift and then with respect to the initial condition. The stability estimates will be given for solutions that belong to the class $\mathcal{V}(m)$ for some fixed $m$.

\paragraph{Stability with respect to the singular drift.}

The following proposition gives an upper bound on the $1/2$-H\"older norm of the difference of two solutions with different singular drifts.

\begin{prop}\label{prop:stab-drift}
Let $\gamma \in (1-1/(2H),1)$, $m \in [2,\infty)$, $T \in \R$ and $t_0<T$.
Let $b,g$ satisfy Assumption \ref{assumpb} with the same constant $\Xi$ and let $F$ satisfy Assumption \ref{Flip}. There exist positive constants $C_1:=C_1(m,\gamma,H,L_F,\Xi,T-t_0)$ and $C_2=C_2(m,\gamma,H,\Xi)$ such that for any
$x \in \R^d$, any two solutions $(X_t)_{t \in [t_0,T]} , (Y_t)_{t \in [t_0,T]}$ to the SDE \eqref{eq:sde} with the same initial condition $x$ and respective drifts $b,g$ such that $X$ and $Y$ belong to the class $\mathcal{V}(m)$, we have
\begin{align*}
 [ X-Y ]_{\mathcal{C}^{1/2}_{[t_0,T]} L_m}  
& \leq   C_1 (1+\llbracket X-B \rrbracket_{\mathcal{C}^{1+H(\gamma \wedge 0)}_{[t_0,T]} L_{m,\infty}})^{C_2} \|b-g\|_{\mathcal{C}^{\gamma-1}}  .
\end{align*}
\end{prop}
\begin{proof}
Assume without any loss of generality that $\llbracket X-B \rrbracket_{\mathcal{C}^{1+H(\gamma \wedge 0)}_{[t_0,T]} L_{m,\infty}}<+\infty$.
Let $K^X, K^Y$ denote the drift processes associated to $Y,X$ given by Definition \ref{defsol-SDE}. Let $b,g$ in $\mathcal{C}^{\gamma+}$ and let
$(b^n)_{n \in \N}, (g^n)_{n \in \N}$ be two sequences converging respectively to $b,g$ in $\mathcal{C}^{\gamma}$ and such that $\sup_{n \in \N} \|g^n\|_{\mathcal{C}^{\gamma}} \leq \Xi $, and such that $K^X$ and $K^Y$ are the almost sure limit of $\int g^n(Y)$ and $\int b^n(X)$ over $[t_0,T]$ respectively.
Let $t_0 \leq \tilde{s} <\tilde{t}$ with $\tilde{t}-\tilde{s}\leq 1$, and let $(s,t) \in \Delta_{[\tilde{s},\tilde{t}]}$ we have
\begin{align*}
Y_t-Y_s-(X_t-X_s)=\int_s^t F(Y_r)-F(X_r) dr + K^Y_t-K^Y_s-K^X_t+K^X_s .
\end{align*}
Using that $F$ is Lipschitz, taking the moments and using Fatou's lemma, we get 
\begin{align}
\| Y_t-Y_s-(X_t-X_s) \|_{L_m} & \leq C \int_s^t \| Y_r-X_r\|_{L_m} dr +\lim_{n \rightarrow \infty}\| \int_s^t g^n(Y_r)-b^n(X_r) dr  \|_{L_m}  \nonumber \\
\begin{split}\label{eq:y-yn}
& \leq  C \Big( [ Y-X]_{\mathcal{C}^{1/2}_{[\tilde{s},\tilde{t}]} L_m} + \| Y_{\tilde{s}}-X_{\tilde{s}} \|_{L^m} \Big) (t-s) \\ & \quad +\limsup_{n \rightarrow \infty} \| \int_s^t g^n(Y_r)-g^n(X_r) dr  \|_{L_m}  \\ & \quad + \limsup_{n \rightarrow \infty}  \| \int_s^t (g^n-b^n)(X_r) dr  \|_{L_m} .
\end{split}
\end{align}
Applying Proposition \ref{prop:reg1} with $q=m$, $f=g^n-b^n$, $\tau=1+H(\gamma \wedge 0)$ and $\psi=Y_t-\widetilde{B}^{t_0}$, we get that
\begin{align}\label{eq:bm-bn}
\| \int_s^t (g^n-b^n)(X_r) dr  \|_{L_m} & \leq C (1+\llbracket X-B \rrbracket_{\mathcal{C}^{1+H(\gamma \wedge 0)}_{[\tilde{s},\tilde{t}]} L_m}) \lim_{n \rightarrow \infty} \|g^n-b^n\|_{\mathcal{C}^{\gamma-1}} (t-s)^{1+H(\gamma -1)} .
\end{align}
Applying now Proposition \ref{prop:reg2} with $f=g^n$, $\psi=X-\widetilde{B}^{t_0}$, $\phi=Y-\widetilde{B}^{t_0}$ and $\tau=1/2$, we get that
\begin{align}\label{eq:bm}
\begin{split}
\| \int_s^t g^n(Y_r)-g^n(X_r) dr  \|_{L_m} & \leq  C  \Big(  \llbracket Y-X \rrbracket_{\mathcal{C}^{1/2}_{[\tilde{s},\tilde{t}]} L_m} (t-s)^{3/2+H(\gamma-1)} \\ & \quad + (1+\llbracket X-B \rrbracket_{\mathcal{C}^{1+H(\gamma \wedge 0)}_{[s,t]} L_{m,\infty}})\| Y_{\tilde{s}}-X_{\tilde{s}}\|_{L_m}  (\tilde{t}-\tilde{s})^{1+H(\gamma-1 )} \Big) .
\end{split}
\end{align}
Plugging \eqref{eq:bm} and \eqref{eq:bm-bn} back in \eqref{eq:y-yn}, using that $\llbracket X-B \rrbracket_{\mathcal{C}^{1+H(\gamma \wedge 0)}_{[\tilde{s},\tilde{t}]} L_m}  \leq  \llbracket X-B \rrbracket_{\mathcal{C}^{1+H(\gamma \wedge 0)}_{[\tilde{s},\tilde{t}]} L_{m,\infty}}  $ and \eqref{eq:compar-norms}, dividing by $(t-s)^{1/2}$ and taking the supremum over $\Delta_{[\tilde{s},\tilde{t}]}$, we conclude that
\begin{align}\label{eq:stab-inter}
\begin{split}
[ Y-X ]_{\mathcal{C}^{1/2}_{[\tilde{s},\tilde{t}]} L_m}  & \leq C \Big( (1+\llbracket X-B \rrbracket_{\mathcal{C}^{1+H(\gamma \wedge 0)}_{[t_0,T]} L_{m,\infty}}) (\| Y_{\tilde{s}}-X_{\tilde{s}}\|_{L_m}+\|g-b\|_{\mathcal{C}^{\gamma-1}} ) \\ & \quad +  \,  [ Y-X ]_{\mathcal{C}^{1/2}_{[\tilde{s},\tilde{t}]} L_m} (\tilde{t}-\tilde{s})^{1+H(\gamma-1)}  \Big) .
\end{split}
\end{align}
Since $X,Y$ are in the class $\mathcal{V}(m)$, we have that $[ Y-X ]_{\mathcal{C}^{1/2}_{[\tilde{s},\tilde{t}]} L_m} < +\infty$. Therefore, for $\tilde{t}-\tilde{s} \leq \ell := (2C)^{-1/(1+H(\gamma-1) )}$, we get
\begin{align*}
[ X-Y ]_{\mathcal{C}^{1/2}_{[\tilde{s},\tilde{t}]} L_m} & \leq C (1+\llbracket X-B \rrbracket_{\mathcal{C}^{1+H(\gamma \wedge 0)}_{[t_0,T]} L_{m,\infty}}) (\| X_{\tilde{s}}-Y_{\tilde{s}}\|_{L_m} +  \|b-g\|_{\mathcal{C}^{\gamma-1}} )  .
\end{align*}
Splitting $[t_0,T]$ into $\lceil \frac{1}{\ell} \rceil$ small consecutive intervals of size at most $\ell$, iterating the above and using the sub-additivity of the H\"older semi-norm, we obtain the claimed estimate.
\end{proof}

\paragraph{Quantitative stability with respect to the initial condition.}
We now investigate the stability with respect to the initial condition. It will be important to explicit the dependence in time of the estimates as they will be used to analyse the long-time behaviour of solutions.
First, we need an intermediary lemma on the H\"older regularity of the difference of any two solutions.
\begin{lemma}\label{prop:regZ-W-lip}
Let $\gamma \in (1-1/(2H),1)$ and $m \in [2,\infty)$. Let $b$ satisfy Assumption \ref{assumpb} and further assume that $b \in \mathcal{C}^\infty$.
Let $F$ satisfy Assumption \ref{Flip}. 
There exists a constant $C:= C(m,\gamma,H,L_F,\Xi)$ such that for any
$x,y \in \R^d$, any $T \in \R$, any $t_0<T$, any two solutions $(X_t)_{t \in [t_0,T]} , (Y_t)_{t \in [t_0,T]}$ to the SDE \eqref{eq:sde} with the same drift $b$ and respective initial conditions $x$ and $y$ such that $X$ and $Y$ belong to the class $\mathcal{V}(m)$, and any $t_0\leq s < t$ with $t-s\leq 1$, we have
\begin{align*}
 [ X-Y ]_{\mathcal{C}^{1/2}_{[s,t]} L_m}  \leq  C (1+\llbracket X-B \rrbracket_{\mathcal{C}^{1+H(\gamma \wedge 0)}_{[s,t]} L_{m,\infty}})  \sup_{r \in [s,t]} \| X_r-Y_r \|_{L_m} .   
\end{align*}
\end{lemma}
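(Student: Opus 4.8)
The plan is to mirror the argument of Proposition \ref{prop:stab-drift}, replacing the difference of drifts by the difference of initial conditions and exploiting the Lipschitz property of $F$. Since $b\in\mathcal{C}^\infty$, both solutions have genuine integral drifts, so subtracting the two copies of \eqref{eq:sde} cancels the common fractional Brownian motion: for any $(s,t)\in\Delta_{[\tilde s,\tilde t]}$ with $\tilde t-\tilde s\le1$ one has
\[
X_t-Y_t-(X_s-Y_s)=\int_s^t \big(F(X_r)-F(Y_r)\big)\,dr+\int_s^t \big(b(X_r)-b(Y_r)\big)\,dr .
\]
Taking $L_m$-norms and using that $F$ is Lipschitz, the first integral is bounded by $L_F\sup_{r\in[\tilde s,\tilde t]}\|X_r-Y_r\|_{L_m}\,(t-s)$, so the whole task reduces to controlling the singular contribution $\big\|\int_s^t b(X_r)-b(Y_r)\,dr\big\|_{L_m}$.

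To handle this term I would apply Proposition \ref{prop:reg2} with $f=b$, $\tau=1/2$, $\psi=X-\widetilde B^{t_0}$ and $\phi=Y-\widetilde B^{t_0}$, which are $\mathbb{F}$-adapted and satisfy $\psi_r+\widetilde B_r=X_r$, $\phi_r+\widetilde B_r=Y_r$, $\psi-\phi=X-Y$. The admissibility condition $(\tau\wedge\tfrac12)+H(\gamma-1)>0$ reads $\tfrac12+H(\gamma-1)>0$, i.e. exactly the threshold $\gamma>1-1/(2H)$, so Proposition \ref{prop:reg2} applies and gives
\begin{align*}
\Big\|\int_s^t b(X_r)-b(Y_r)\,dr\Big\|_{L_m}
&\le C\|b\|_{\mathcal{C}^\gamma}\,\llbracket X-Y\rrbracket_{\mathcal{C}^{1/2}_{[\tilde s,\tilde t]}L_m}(t-s)^{\frac32+H(\gamma-1)}\\
&\quad+ C\|b\|_{\mathcal{C}^\gamma}\big(1+\llbracket X-\widetilde B^{t_0}\rrbracket_{\mathcal{C}^{1+H(\gamma\wedge0)}_{[\tilde s,\tilde t]}L_{1,\infty}}\big)\sup_{r\in[\tilde s,\tilde t]}\|X_r-Y_r\|_{L_m}(t-s)^{1+H(\gamma-1)}.
\end{align*}
I would then rewrite the history-free norm via \eqref{eq:link-holdernorms} as $\llbracket X-\widetilde B^{t_0}\rrbracket=\llbracket X-B\rrbracket$, bound $\llbracket X-B\rrbracket_{L_{1,\infty}}\le\llbracket X-B\rrbracket_{L_{m,\infty}}$ by conditional Jensen, replace $\llbracket X-Y\rrbracket$ by $2[X-Y]$ using \eqref{eq:compar-norms}, and use $\|b\|_{\mathcal{C}^\gamma}\le\Xi$.

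Collecting the estimates, dividing by $(t-s)^{1/2}$ and taking the supremum over $(s,t)\in\Delta_{[\tilde s,\tilde t]}$ produces an inequality of the form
\begin{align*}
[X-Y]_{\mathcal{C}^{1/2}_{[\tilde s,\tilde t]}L_m}
&\le C\,[X-Y]_{\mathcal{C}^{1/2}_{[\tilde s,\tilde t]}L_m}(\tilde t-\tilde s)^{1+H(\gamma-1)}\\
&\quad+ C\big(1+\llbracket X-B\rrbracket_{\mathcal{C}^{1+H(\gamma\wedge0)}_{[\tilde s,\tilde t]}L_{m,\infty}}\big)\sup_{r\in[\tilde s,\tilde t]}\|X_r-Y_r\|_{L_m},
\end{align*}
where all residual powers of $(\tilde t-\tilde s)\le1$ are nonnegative because $1+H(\gamma-1)>0$ and $\tfrac12+H(\gamma-1)>0$ (the Lipschitz term, of order $(\tilde t-\tilde s)^{1/2}$, is absorbed into the constant $1$). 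Here the hypothesis $X,Y\in\mathcal{V}(m)$ is essential and constitutes the only delicate point: it guarantees $[X-Y]_{\mathcal{C}^{1/2}_{[\tilde s,\tilde t]}L_m}<\infty$, so that for $\tilde t-\tilde s\le\ell:=(2C)^{-1/(1+H(\gamma-1))}$ the first term on the right may legitimately be absorbed into the left-hand side; without this a priori finiteness the absorption is not permitted. Finally I would split an arbitrary $[s,t]$ with $t-s\le1$ into at most $\lceil1/\ell\rceil$ subintervals of length $\le\ell$, apply the absorbed estimate on each, and recombine using the sub-additivity of the $\mathcal{C}^{1/2}$-seminorm together with the monotonicity of both $\sup_{r}\|X_r-Y_r\|_{L_m}$ and $\llbracket X-B\rrbracket_{\mathcal{C}^{1+H(\gamma\wedge0)}L_{m,\infty}}$ in the interval; since the number of subintervals depends only on the admitted parameters, this yields the claimed bound with $C=C(m,\gamma,H,L_F,\Xi)$.
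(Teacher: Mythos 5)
Your proposal is correct and follows essentially the same route as the paper: the paper's proof of this lemma simply invokes the argument of Proposition \ref{prop:stab-drift} with $b^n=b=g=g^n$ to reach the analogue of \eqref{eq:stab-inter}, then uses the finiteness coming from the class $\mathcal{V}(m)$ to absorb the $[X-Y]_{\mathcal{C}^{1/2}}$ term on short intervals and iterates via sub-additivity, exactly as you do. Writing out the application of Proposition \ref{prop:reg2} explicitly rather than citing Proposition \ref{prop:stab-drift} is only a presentational difference.
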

\begin{proof}
Let $t_0 \leq  s<  t$ such that $ t- s \leq 1$. Assume without any loss of generality that $\llbracket X-B \rrbracket_{\mathcal{C}^{1+H(\gamma \wedge 0)}_{[ s,  t]} L_{m,\infty}} <+\infty$. Let $(\tilde s,\tilde t) \in \Delta_{[s,t]}$. One can follow the exact same lines as in the proof of Proposition \ref{prop:stab-drift}, taking $b^n=b=g=g^n$, to arrive at \eqref{eq:stab-inter}. Then, letting $\ell := (2C )^{-1/(1+H(\gamma-1))}$ and using that $X,Y$ are in the class $\mathcal{V}(m)$ and $ \| X_{\tilde s} - Y_{\tilde s} \|_{L_m} \leq \sup_{r \in [s,t]} \| X_{r} - Y_{r} \|_{L_m}$, we get for $t-s \leq \ell$
\begin{align}\label{eq:stab-inter2}
[X-Y]_{\mathcal{C}^{1/2}_{[ s, t]}L_m} & \leq C(1+\llbracket X-B \rrbracket_{\mathcal{C}^{1+H(\gamma \wedge 0)}_{[s,t]} L_{m,\infty}}) \sup_{r \in [s,t]} \| X_r-Y_r \|_{L_m}  .
\end{align}
Repeating this argument and using the sub-additivity of the H\"older semi-norm, we conclude that \eqref{eq:stab-inter2} holds for $t-s \leq 1$.
\end{proof}

We can now state a stability estimate with respect to the initial condition. This result will be useful for uniqueness of solutions (Section \ref{sec:strongexistence}) and the long-time behaviour (Section \ref{sec:longtime}).
\begin{prop}\label{prop:stab-lip}
Let $\gamma \in (1-1/(2H),1)$ and $m \in [2,\infty)$. Let $b$ satisfy Assumption \ref{assumpb}.
Let $F$ satisfy Assumption \ref{Flip}, and further assume that $F$ satisfies \eqref{eq:Fdissipative} with $\kappa_2=0$. 
There exists a universal constant $C$ and a positive constant $\mathbf{M}:=\mathbf{M}(m,\gamma,H,L_F,\Xi,C_0)$ such that for any $x,y \in \R^d$, any $T \in \R$ any $t_0<T$, any two solutions $(X_t)_{t \in [t_0,T]} , (Y_t)_{t \in [t_0,T]}$ the SDE \eqref{eq:sde} with the same drift $b$ and respective initial conditions $x$ and $y$ such that $X$ and $Y$ belong to the class $\mathcal{V}(m)$ and $X$ satisfies: there exists a constant $C_0$
such that for any $t_0<s<t$ with $t-s \leq 1$
\begin{align}\label{eq:inf}
\llbracket X-B \rrbracket_{\mathcal{C}^{1+H(\gamma \wedge 0)}_{[s,t]} L_{m,\infty}} \leq  C_0,
\end{align}
any $t \in [t_0,T]$, we have
\begin{align}\label{eq:stab-lip}
\begin{split}
\| X_t-Y_t \|_{L_{2m}} & \leq  C |x-y|  \exp\{-\kappa_1(t-t_0)+ \mathbf{M}\|b\|_{\mathcal{C}^\gamma}  (t-t_0)\} . 
\end{split}
\end{align}
\end{prop}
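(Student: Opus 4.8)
The plan is to prove \eqref{eq:stab-lip} first for $b\in\mathcal{C}^\infty$, where $X-B$ and $Y-B$ are almost surely $C^1$ in time, and then to remove the smoothness by approximation. In the smooth case I would start from $\tfrac{d}{dt}(X_t-Y_t)=F(X_t)-F(Y_t)+b(X_t)-b(Y_t)$ and use dissipativity \eqref{eq:Fdissipative} with $\kappa_2=0$ to obtain the pathwise inequality $\tfrac{d}{dt}|X_t-Y_t|^2\le -2\kappa_1|X_t-Y_t|^2+2\langle X_t-Y_t,b(X_t)-b(Y_t)\rangle$. Integrating against the factor $e^{2\kappa_1(\cdot-t_0)}$ over a subinterval $[s,t]$ gives
\[
|X_t-Y_t|^2\le e^{-2\kappa_1(t-s)}|X_s-Y_s|^2+2\int_s^t e^{-2\kappa_1(t-r)}\langle X_r-Y_r,b(X_r)-b(Y_r)\rangle\,dr,
\]
and taking $L_m$-norms, so that $\||X_t-Y_t|^2\|_{L_m}=\|X_t-Y_t\|_{L_{2m}}^2$, reduces everything to bounding the moment of the bilinear integral.

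That integral is precisely the object estimated by Proposition~\ref{prop:reg3}, applied with $c=2\kappa_1$, $\psi=X-\widetilde B^{t_0}$ and $\phi=Y-\widetilde B^{t_0}$, so that $\psi+\widetilde B=X$, $\phi+\widetilde B=Y$ and $\psi-\phi=X-Y$. The term $\llbracket X-B\rrbracket_{\mathcal{C}^{1+H(\gamma\wedge0)}_{[s,t]}L_{1,\infty}}$ arising there is dominated by $C_0$ via \eqref{eq:inf} and Jensen, while the seminorm $\llbracket X-Y\rrbracket_{\mathcal{C}^{1/2}_{[s,t]}L_{2m}}$ is absorbed by Lemma~\ref{prop:regZ-W-lip} (used at moment level $2m$), which bounds it by $C(1+C_0)\sup_{r\in[s,t]}\|X_r-Y_r\|_{L_{2m}}$. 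Collecting these bounds and using $\delta\le1$ together with $1+H(\gamma-1)>0$ to discard the higher power $\delta^{3/2+H(\gamma-1)}$ yields, on any interval of length at most $\delta$,
\[
\Big\|\int_s^t e^{-2\kappa_1(t-r)}\langle X_r-Y_r,b(X_r)-b(Y_r)\rangle\,dr\Big\|_{L_m}\le C\|b\|_{\mathcal{C}^\gamma}\,\delta^{1+H(\gamma-1)}\sup_{r\in[s,t]}\|X_r-Y_r\|_{L_{2m}}^2,
\]
with $C$ depending on $m,\gamma,H,L_F,\Xi,C_0$.

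The heart of the proof is then a weighted Grönwall iteration. Writing $G(t):=e^{2\kappa_1(t-t_0)}\|X_t-Y_t\|_{L_{2m}}^2$, the two displays combine into a recursion of the form $\sup_{[s,s+\delta]}G\le G(s)+A\,\sup_{[s,s+\delta]}G$ with $A=C\|b\|_{\mathcal{C}^\gamma}\,\delta^{1+H(\gamma-1)}e^{2\kappa_1\delta}$ (this is the inequality where, in the critical regime $\gamma=1-1/(2H)$, a logarithm would enter and spoil the conclusion). The decisive point is the calibration of $\delta$: I would fix $\delta\in(0,1]$, depending only on $C,\Xi,\kappa_1,\gamma,H$, so that $2C\,\Xi\,\delta^{1+H(\gamma-1)}e^{2\kappa_1\delta}=\tfrac12$; then for every admissible $b$ with $\|b\|_{\mathcal{C}^\gamma}\le\Xi$ one gets $A=\tfrac12\|b\|_{\mathcal{C}^\gamma}/\Xi\le\tfrac12$, hence $\sup_{[s,s+\delta]}G\le(1-A)^{-1}G(s)$ and in particular $G(s+\delta)\le(1-A)^{-1}G(s)$. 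Iterating over the $\lceil(t-t_0)/\delta\rceil$ intervals, using $G(t_0)=|x-y|^2$ and $\ln\frac{1}{1-A}\le2A$, gives $G(t)\le|x-y|^2\exp\{2A\delta^{-1}(t-t_0)\}=|x-y|^2\exp\{(\Xi\delta)^{-1}\|b\|_{\mathcal{C}^\gamma}(t-t_0)\}$, and taking square roots recovers \eqref{eq:stab-lip} with $\mathbf{M}=(2\Xi\delta)^{-1}$. To pass from smooth $b$ to general $b\in\mathcal{C}^{\gamma+}$, I would take smooth $b^n\to b$ in $\mathcal{C}^\gamma$ with $\|b^n\|_{\mathcal{C}^\gamma}\le\Xi$, let $X^n,Y^n$ solve the SDE with drift $b^n$ and data $x,y$ (for which \eqref{eq:inf} holds uniformly in $n$ by Proposition~\ref{prop:regZ-infty}, so $\mathbf{M}$ is $n$-independent), apply the smooth-case bound, and send $n\to\infty$ using Proposition~\ref{prop:stab-drift}, which forces $\|X^n_t-X_t\|_{L_{2m}},\|Y^n_t-Y_t\|_{L_{2m}}\to0$ while $\|b^n\|_{\mathcal{C}^\gamma}\to\|b\|_{\mathcal{C}^\gamma}$.

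I expect the main obstacle to be exactly the extraction of a rate that is \emph{linear} in $\|b\|_{\mathcal{C}^\gamma}$. A naive calibration (forcing the per-step factor to equal $2$, say) would give a rate scaling like $\|b\|_{\mathcal{C}^\gamma}^{1/(1+H(\gamma-1))}$, with exponent strictly larger than $1$; only by tuning $\delta$ against the a~priori bound $\Xi$ does the growth rate become proportional to $\|b\|_{\mathcal{C}^\gamma}$, which is what makes the smallness threshold $\mathbf{M}\|b\|_{\mathcal{C}^\gamma}<\kappa_1$ in \eqref{eq:defbeta} meaningful. A secondary, purely technical nuisance will be the bookkeeping of moment indices ($m$ versus $2m$, and $L_{1,\infty}$ versus $L_{m,\infty}$) when invoking Proposition~\ref{prop:reg3} and Lemma~\ref{prop:regZ-W-lip}.
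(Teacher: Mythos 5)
Your proposal is correct and follows essentially the same route as the paper: reduce to smooth $b$ via Proposition \ref{prop:stab-drift}, use dissipativity and a Duhamel/Gr\"onwall identity to isolate the bilinear integral $\int e^{-2\kappa_1(t-r)}\langle X_r-Y_r,b(X_r)-b(Y_r)\rangle\,dr$, bound it over small intervals by Proposition \ref{prop:reg3} combined with \eqref{eq:inf} and Lemma \ref{prop:regZ-W-lip}, and close with a discrete Gr\"onwall iteration for $g(t)=e^{2\kappa_1(t-t_0)}\|X_t-Y_t\|_{L_{2m}}^2$. The only (welcome) refinement is that you calibrate the mesh $\delta$ against the a priori bound $\Xi$ to make the growth rate manifestly linear in $\|b\|_{\mathcal{C}^\gamma}$, a point the paper leaves implicit in its unit-mesh discrete Gr\"onwall step.
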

\begin{proof}
If \eqref{eq:stab-lip} holds for any $b \in \mathcal{C}^\infty$, then one can take $(b^n)_{n \in \N}$ be a sequence of smooth functions converging to $b$ in $\mathcal{C}^{\gamma}$ that satisfies $\sup_{n \in \N} \|b^n\|_{\mathcal{C}^\gamma} \leq \Xi$, and get that \eqref{eq:stab-lip} holds for $X^n,Y^n$, the solutions of \eqref{eq:sde} with drift $b^n$ and initial conditions $x,y$ respectively. Then using Proposition \ref{prop:stab-drift} with $g=b^n$, we get that $X_t^n$ and $Y_t^n$ converge to $X_t$ and $Y_t$ in $L_m$ as $n \rightarrow \infty$. As a result, taking the limit, we would conclude that \eqref{eq:stab-lip} also holds for $X$ and $Y$. So without any loss of generality, we assume that $b$ is a smooth function in $\mathcal{C}^\infty$.

Let $t >t_0$, using \eqref{eq:Fdissipative}, we write
\begin{align*}
\frac{d}{dt}| X_t-Y_t|^2 & = 2 \langle X_t-Y_t, F(X_t)-F(Y_t) + b(X_t)-b(Y_t) \rangle \\ 
& \leq -2 \kappa_1 | X_t-Y_t|^2 + 2 \langle X_t-Y_t , b(X_t)-b(Y_t) \rangle .
\end{align*}
Using a Gr\"onwall-type argument, we get
\begin{align*}
| X_t-Y_t |^2 & \leq e^{-2 \kappa_1( t-t_0)} |x-y|^2 +2 \int_{t_0}^t e^{- 2 \kappa_1 (t-r)} \langle X_r-Y_r, b(X_r) - b(Y_r) \rangle dr .
\end{align*}
Let $\delta >0$, $N = \lfloor t-t_0 \rfloor / \delta$ and $\Pi = \{ t_{k} \}_{k=0}^N$ be a partition such that $t_{k+1}-t_k = \delta$ for $k=0,\cdots, N-1$. Define $t_{N+1}=t$, we have for any $\tilde t \in [t_{N},t_{N+1}]$
\begin{align*}
| X_{\tilde t}-Y_{\tilde t} |^2 & \leq e^{- 2 \kappa_1 (\tilde t-t_0)} |x-y|^2 \\ & \quad + 2 \sum_{k=0}^{N-1} e^{- 2 \kappa_1 (\tilde t-t_{k+1})} \int_{t_k}^{t_{k+1}} e^{- 2 \kappa_1 (t_{k+1}-r)} \langle X_r-Y_r , b(X_r)-b(Y_r) \rangle dr \\ & \quad + \int_{t_N}^{\tilde t} e^{- 2 \kappa_1 (\tilde t-r)} \langle X_r-Y_r , b(X_r)-b(Y_r) \rangle dr
\end{align*}
By applying Proposition \ref{prop:reg3} with $\psi=X-\widetilde{B}^{t_0}$, $\phi=Y-\widetilde{B}^{t_0}$ and using \eqref{eq:inf}, bounding $\sup_{[t_N,\tilde t]}$ by $\sup_{[t_N,t_{N+1}]}$, and $\tilde t-t_N$ by $t_{N+1}-t_N$ we get
\begin{align*}
\begin{split}
\| X_{\tilde t}-Y_{\tilde t} \|_{L_{2m}}^2 &  \leq e^{- 2 \kappa_1 (\tilde t-t_0)}  |x-y|^2  \\ & \quad +C \sum_{k=0}^{N} e^{- 2 \kappa_1 (\tilde t-t_{k+1})}   \| b \|_{\mathcal{C}^\gamma}  (1+C_0) \sup_{r \in [t_{k},t_{k+1}]} \| X_r-Y_r \|_{L_{2m}}^2 (t_{k+1}-t_k)^{1+H(\gamma-1)} \\ & \quad + C \sum_{k=0}^{N} e^{- 2 \kappa_1 (\tilde t-t_{k+1})}   \| b \|_{\mathcal{C}^\gamma}    [X-Y]_{\mathcal{C}^{1/2}_{[t_k,t_{k+1}]} L_{2m}}^2 (t_{k+1}-t_k)^{3/2+H(\gamma-1)} .
\end{split}
\end{align*}
Using Lemma \ref{prop:regZ-W-lip} and \eqref{eq:inf}, we get
\begin{align}\label{eq:X-Y-progress}
\| X_{\tilde t}-Y_{\tilde t} \|_{L_{2m}}^2 
& \leq e^{-2 \kappa_1 (\tilde t-t_0)}  |x-y|^2   +C(1+C_0)^2 \| b \|_{\mathcal{C}^\gamma}  \sum_{k=0}^{N} e^{- 2 \kappa_1 (\tilde t-t_{k+1})}   \sup_{r \in [t_{k},t_{k+1}]} \| X_r-Y_r \|_{L_{2m}}^2 \delta^{1+H(\gamma-1)} .
\end{align}
Denote by $g$ the function $g(u)=e^{2 \kappa_1 (u-t_0)} \|X_u-Y_u\|_{L_{2m}}^2$, then we have
\begin{align*}
g(\tilde t) & \leq g(t_0) +  C \delta^{1+H(\gamma-1)} \| b \|_{\mathcal{C}^\gamma} \sum_{k=0}^{N}  \sup_{r \in [t_{k},t_{k+1}]} g(r) \\
& = g(t_0)+C \delta^{1+H(\gamma-1)} \| b \|_{\mathcal{C}^\gamma}  \sup_{r \in [t_{N},t_{N+1}]} g(r) +  C \| b \|_{\mathcal{C}^\gamma} \sum_{k=0}^{N-1}  \sup_{r \in [t_{k},t_{k+1}]} g(r).
\end{align*}
Taking the supremum over $\tilde t $ in $[t_N,t_{N+1}]$, we get
\begin{align*}
 \sup_{r \in [t_{N},t_{N+1}]} g(r) 
& \leq g(t_0)+C \delta^{1+H(\gamma-1)} \Xi  \sup_{r \in [t_{N},t_{N+1}]} g(r) +  C \| b \|_{\mathcal{C}^\gamma} \sum_{k=0}^{N-1}  \sup_{r \in [t_{k},t_{k+1}]} g(r).
\end{align*}
Choosing $\delta = (1/(2 C \Xi))^{\frac{1}{1+H(\gamma-1)}}$, we have
\begin{align*}
\sup_{t \in [t_{N},t_{N+1}]} g(t) \leq 2 g(t_0) +   2 C \| b \|_{\mathcal{C}^\gamma} \sum_{k=0}^{N-1}  \sup_{r \in [t_{k},t_{k+1}]} g(r) \\
\end{align*}
A discrete Gr\"onwall argument shows that
\begin{align*}
g(t) \leq 2 g(t_0) \exp\Big\{2C \| b \|_{\mathcal{C}^\gamma}  (N+1) \Big\}  \leq 2 g(t_0) \exp\Big\{2C \delta^{-1} \| b \|_{\mathcal{C}^\gamma}  (t-t_0) \Big\} ,
\end{align*}
which concludes the proof. Note that the constant in front $g(t_0)$ can be chosen arbitrarily close to $1$, and this changes the constant in the exponential.
\end{proof}

\subsection{Weak existence: Proof of Theorem \ref{thmmain-existence}}\label{sec:weakexistence}
The goal of this section is to prove Theorem \ref{thmmain-existence}. The proof is based on performing a tightness argument based on the a priori estimates of Section \ref{sec:apriori}.
Assume that $F$ satisfies Assumption \ref{assumpF}. Let $\gamma \in (1/2-1/(2H),1)$ and let $b$ satisfy Assumption \ref{assumpb}.
Let $x \in \R^d$, $T \in \R$, $t_0 <T$, and $\left(b^n\right)_{n \in \mathbb{N}}$ be a sequence of bounded continuous functions converging to $b$ in $\mathcal{C}^{\gamma}$. Assume without any loss of generality that $\sup_{n \in \N} \|b^n \|_{\mathcal{C}^\gamma} \leq \Xi$. 
\paragraph{Tightness.}
Let $X^n$ be a solution of \eqref{eq:sde} with initial condition $x$ and drift $b^n$ defined on some filtered probability space and with $W$ a Wiener process with respect to its underlying filtration. For any $m \in [2,\infty)$, by Proposition \ref{prop:regZ} and Proposition \ref{prop:unif-moments}, there exists a constant $C:=C(m,\gamma,H,x,\Xi,T-t_0)$ such that for any $(s,t) \in \Delta_{[t_0,T]}$ with $t-s \leq 1$
\begin{align*}
\| X^n_t-B_t-(X^n_s-B_s) \|_{L_m} \leq C (t-s)^{1+H(\gamma \wedge 0)} .
\end{align*}
Hence using classical tightness arguments (e.g. \cite[Theorem 7.3]{Billingsley}), one can deduce that the sequence $(X^n,W)_{n \in \N}$ is tight in $\mathcal{C}([t_0,T],\R^d) \times \mathcal{C}(\R,\R^d)$.
Since this space is Polish (recall the metric \eqref{eq:metric}), by the Prokhorov theorem, there exists a subsequence $(n_k)_{k \in \N}$ such that $(X^{n_k},W)$ converges weakly in the space $\mathcal{C}([t_0,T],\R^d) \times \mathcal{C}(\R,\R^d)$. 

\paragraph{Conclusion.} By passing to the subsequence, we can assume that $(X^{n},W)$ converges weakly in the space $\mathcal{C}([t_0,T],\R^d) \times \mathcal{C}(\R,\R^d)$. Since it is a Polish space, we can apply the Skorokhod representation theorem and deduce that there exists a sequence $(\widehat Y^n,\widehat W^n)$ defined on a common probability space $(\widehat \Omega, \widehat{\mathcal{F}}, \widehat{\mathbb{P}})$ and a random element $(\widehat{X},\widehat{W})$ such that $(\widehat Y^n, \widehat W^n)=(X^n,W)$ in law and $(\widehat Y^n, \widehat W^n)$ converges to $(\widehat{X},\widehat{W})$ almost surely. In particular, using the continuity of the drift $b^n$ and $F$, one can show that $\widehat Y^n$ is a solution to \eqref{eq:sde} with respect to $(\widehat \Omega,\widehat{\mathcal{F}}^{\widehat W^n}, \widehat{\mathbb{P}})$ and $\widehat W^n$ instead of $W$. 

Following the same proof as in \cite[Proposition 3.4]{athreya2020well} and using that $(\widehat Y^n, \widehat W^n)$ converges to $(\widehat{X},\widehat{W})$ almost surely, and satisfies \eqref{eq:thm-existence-2} and \eqref{eq:thm-existence-3} with an fBm $\widehat B^n$ defined via $\widehat W^n$, one can show that \eqref{approximation2} holds where $K$ is the process defined by 
\begin{align*}
    K_t = \widehat{X}_t-x-\int_{t_0}^t F(\widehat{X}_r)dr -\widehat{B}_t + \widehat{B}_{t_0}, \, t \ge t_0
\end{align*}
with $\widehat{B}$ defined via $\widehat{W}$,
and that $(\widehat{X},\widehat{B})$ also satisfies the regularity estimates \eqref{eq:thm-existence-2} and \eqref{eq:thm-existence-3}.

For $t \in [t_0,T]$, define now $\widetilde{\mathcal{F}_t}:=\sigma((\widehat{X}_r)_{r \in [t_0,t]}, (\widehat{W}_r)_{r \leq t})$ and denote by $\widetilde{\mathcal{F}},\widetilde{\mathbb{F}}$ the corresponding sigma algebra and filtration, then clearly $\widehat{X}_t$ is $\widetilde{\mathcal{F}}_t$-measurable. Hence $(\widehat{X}_t)_{t \in [t_0,T]}$ is a solution with respect to $(\widehat \Omega,\widetilde{\mathcal{F}}, \widetilde{\mathbb{F}}, \widehat{\mathbb{P}})$ and $\widehat{W}$. 

\subsection{Existence and uniqueness of solutions: Proof of Theorem \ref{thmmain-uniqueness}}\label{sec:strongexistence}
The goal of this section is to prove Theorem \ref{thmmain-uniqueness}. The proof is based on mollifying the drift, applying the regularisation properties and passing to the limit. Let $(\Omega,\mathcal{F},\mathbb{F},\mathbb{P})$ be a filtered probability space and $(W_t)_{t \in \R}$ be a two-sided $\mathbb{F}$-Wiener process. Solutions in this section will always be understood with respect to this probability space and $W$.
Assume that $F$ satisfies Assumption \ref{Flip}. Let $\gamma \in (1-1/(2H),1)$ and $b$ satisfy Assumption \ref{assumpb}. Let $x \in \R^d$, $T \in \R$, $t_0 <T$, and $\left(b^n\right)_{n \in \mathbb{N}}$ be a sequence of bounded continuous functions converging to $b$ in $\mathcal{C}^{\gamma}$. Assume without any loss of generality that $\sup_{n \in \N} \|b^n \|_{\mathcal{C}^\gamma} \leq \Xi$. 

For any $n \in \N$, let $X^n$ be the solution to \eqref{eq:sde} with initial condition $x$, drift $b^n$ and fBm $B$ defined via $W$.

\paragraph{Existence.} Let $m \in [2,\infty)$ and $n,k \in \N$, by Proposition \ref{prop:regZ} and Proposition \ref{prop:regZ-infty}, we have that $X^n,X^k$ belong to the class $\mathcal{V}(m)$ defined in \eqref{eq:classVm} and $X^n$ satisfies \eqref{eq:inf}. Thus, we can apply Proposition \ref{prop:stab-drift} to get that there exists a constant $C:=C(m,\gamma,H, \Xi)$ such that for any $t_0 \leq s <t$ with $t-s \leq 1$, we have
\begin{align}\label{eq:Xn-Xk-C1/2}
[ X^n-X^k ]_{\mathcal{C}^{1/2}_{[s,t]} L_m} & \leq C  \| b^n-b^k \|_{\mathcal{C}^{\gamma}} .
\end{align}
Hence, by Kolmogorov's continuity theorem (choosing $m$ sufficiently large), there exists $\varepsilon \in (0,1/2)$ and a constant $C:=C(\varepsilon,m,\gamma,H,T-t_0, \Xi)$ such that
\begin{align*}
\Big\|  \sup_{(s,t) \in \Delta_{[t_0,T]}} \frac{|X^n_t-X^k_t-(X^n_s-X^k_s)|}{(t-s)^{\frac{1}{2}-\varepsilon}} \Big\|_{L_m} & \leq C \| b^n-b^k \|_{\mathcal{C}^{\gamma}} .
\end{align*}
Since $X^n_{t_0}=X^k_{t_0}=x$, it follows that $\| \sup_{t \in [t_0,T]} |X^n_t-X^k_t| \|_{L_m}$ converges to $0$ as $n,k \rightarrow \infty$. Therefore, $(X^n)_{n \in \N}$ is a Cauchy sequence in $L_m(\Omega; \mathcal{C}([t_0,T],\R^d))$ and thus converges to a continuous process $X$. Since for any $n$, $X^n$ is adapted to $\mathbb{F}$, it follows that $X$ is also adapted to $\mathbb{F}$. \\

We will show that $X$ is a solution to \eqref{eq:sde}. First we check that it satisfies the regularity estimates \eqref{eq:thm-existence-1}, \eqref{eq:thm-existence-2} and \eqref{eq:thm-existence-3}, then show that it satisfies \eqref{solution1} from Definition \ref{defsol-SDE}.
By Propositions \ref{prop:regZ}, \ref{prop:unif-moments} and \ref{prop:regZ-infty}, we have that for any $n \in \N$, $X^n$ satisfies \eqref{eq:thm-existence-1}, \eqref{eq:thm-existence-2} and \eqref{eq:thm-existence-3}. Since the constants do not depend on $n$, by passing to the limit, we get that $X$ also satisfies the same estimates. 

Next, we check that $X$ satisfies \eqref{solution1} of Definition \ref{defsol-SDE} with a process $K$ that satisfies \eqref{approximation2}. For any $n \in \N$, define $K^n = \int_{t_0}^{\cdot} b^n(X_r) dr$.

Using Proposition \ref{prop:reg1} (with $q=m$, $\psi=X-\widetilde{B}$, $\gamma \equiv \gamma-1$, $\tau=1+H(\gamma \wedge 0)$ and $f=b^n-b^j$), there exists a constant $C:=C(m,\gamma,H,T-t_0)$ have that for any $n,j \in \N$ and $t_0 <s <t$
\begin{align*}
\| K^n_t-K^n_s - (K^j_t-K^j_s) \|_{L_m} & \leq C \| b^n-b^j\|_{\mathcal{C}^{\gamma-1}} \Big( 1+ \llbracket X-\widetilde{B} \rrbracket_{\mathcal{C}^{1+H(\gamma \wedge 0)}_{[s,t]} L_{m}} \Big) (t-s)^{1+H(\gamma-1)} .
\end{align*}
Using that $\llbracket X-\widetilde{B} \rrbracket_{\mathcal{C}^{1+H(\gamma \wedge 0)}_{[s,t]} L_{m}} \leq \llbracket X-\widetilde{B} \rrbracket_{\mathcal{C}^{1+H(\gamma \wedge 0)}_{[s,t]} L_{m,\infty}}= \llbracket X-B\rrbracket_{\mathcal{C}^{1+H(\gamma \wedge 0)}_{[s,t]} L_{m,\infty}}$ and \eqref{eq:thm-existence-3}, we get that
\begin{align*}
\| K^n_t-K^n_s - (K^j_t-K^j_s) \|_{L_m} \leq C \|b^n-b^j\|_{\mathcal{C}^{\gamma-1}} (t-s)^{1+H(\gamma-1)}.
\end{align*}
Consequently, we deduce (again by Kolmogorov's continuity theorem) that $(K^n)_{n \in \N}$ is a Cauchy sequence in $L_m(\Omega,\mathcal{C}([t_0,T],\R^d))$ and therefore converges to some continuous process $K$. In particular, $K$ satisfies \eqref{approximation2} of Definition \ref{defsol-SDE}.

To conclude that $X$ is a solution to \eqref{eq:sde}, we show that $X$ satisfies \eqref{solution1}. Recall that $(X^n)_{n \in \N}$ satisfies \eqref{solution1} with $K$ replaced with $\int b^n(X^n)$. Using that $F$ is Lipschitz, we have for any $t \in [t_0,T]$,
\begin{align*}
    & \| X_t-x-\int_{t_0}^t F(X_r)dr- K_t -(B_t-B_{t_0}) \|_{L_m} \\ & = \|X_t-X_t^n + \int_{t_0}^tF(X^n_r)-F(X_r)dr + \int_{t_0}^t b^n(X^n_r) -b^n(X_r) dr + K^n_t-K_t \|_{L_m} \\
    & \leq \|X_t-X_t^n\|_{L_m} + C \sup_{r \in [t_0,T]} \|X_r-X^n_r\|_{L_m} (t-t_0) + \|\int_{t_0}^t b^n(X^n_r)-b^n(X_r) dr \|_{L_m} + \| K^n_t-K_t\|_{L_m} .
\end{align*}
Using Proposition \ref{prop:reg2} with $f=b^n$, $\tau=1/2$, and recalling that $X$ satisfies \eqref{eq:thm-existence-3} and $\| b^n\|_{\mathcal{C}^\gamma} \leq \Xi$, we get that there exists a constant $C:=C(m,\gamma,H,t_0,T,\Xi)$ such that
\begin{align*}
    & \| X_t-x-\int_{t_0}^t F(X_r)dr- K_t-(B_t-B_{t_0}) \|_{L_m}
    \\ & \leq \|X_t-X_t^n\|_{L_m} +C \sup_{r \in [t_0,T]} \|X_r-X^n_r\|_{L_m}  + C [X^n-X]_{\mathcal{C}^{1/2}_{[t_0,T] L_m}} + \| K^n_t-K_t\|_{L_m} .
\end{align*}
By passing to the limit as $k \rightarrow \infty$ in \eqref{eq:Xn-Xk-C1/2}, we get that $[X^n-X]_{\mathcal{C}^{1/2}_{[t_0,T] L_m}} \leq C \| b^n-b\|_{\mathcal{C}^\gamma}$. Hence, passing to the limit as $n \rightarrow \infty$ and using that $X^n,K^n$ converge to $X,K$ in $L_m(\Omega,\mathcal{C}([t_0,T],\R^d))$,  we get that for any $t \in [t_0,T]$, almost surely, $$X_t=x+\int_0^t F(X_r)dr + K_t + B_t-B_{t_0}.$$
Since the processes in the equation above have H\"older continuous trajectories, it follows that $X$ satisfies \eqref{solution1}, and is therefore a solution to \eqref{eq:sde}.

\paragraph{Uniqueness.} Let $Y$ be another solution to \eqref{eq:sde} in the class $\mathcal{V}$ (defined in \eqref{eq:class-uniqueness}). Recalling that $X$ satisfies \eqref{eq:thm-existence-3}, we can apply Proposition \ref{prop:stab-lip} (since $F$ is Lipschitz, it satisfies \eqref{eq:Fdissipative} for some $\kappa_1<0$ and 
$\kappa_2=0$), and take $x=y$, $m=2$ to get that $X$ and $Y$ are indistinguishable.

\paragraph{Proof of \eqref{eq:exp-decay}.} Since the unique solution of \eqref{eq:sde} satisfies \eqref{eq:inf}, we can apply Proposition \ref{prop:stab-lip} with $C_0=C_0(m,\gamma,H,L_F,\Xi)$ to obtain \eqref{eq:exp-decay}.

 \section{Markovian structure and invariant measures}\label{sec:longtime}
The goal of this section is to prove the existence and uniqueness of an invariant measure.
We fix a probability space $(\Omega, \mathcal{F}, \mathbb{F}, \mathbb{P})$ and an $\mathbb{F}$-Wiener process $W$. 
Let $\mathcal{C}_0((-\infty,0],\R^d)$ (respectively $\mathcal{C}^\infty_0((-\infty,0],\R^d)$) denote the space of $\R^d$-valued continuous (respectively smooth) functions over $(-\infty,0]$ that vanish at $0$. Let  $\mathbf{W}$ be the left-sided Wiener measure, i.e. the measure induced by $W|_{(-\infty,0]}$ on $\mathcal{C}_0((-\infty,0],\R^d)$.
Let $\mathcal{H}_H$ be the closure of $\mathcal{C}^\infty_0((-\infty,0],\Rd)$ with respect to the norm
\begin{align}
    \| w \|_{\mathcal{H}_H} := \sup_{s,t \in (-\infty,0]} \frac{|w(t)-w(s)|}{|t-s|^{(1-H)/2} \big( 1+ |t|+ |s| \big)^{1/2}}.
\end{align}
Then, $\mathcal{H}_H$ is Polish and $\mathbf{W}$ is supported on $\mathcal{H}_H$ (\cite[Lemma 3.10 and Lemma 3.8]{Hairer}). 
\paragraph{Framework and notation.}
The first enhancement is to allow dynamics of \eqref{eq:sde} to start from a given memory.
To this end, it is necessary to replace the driving signal up to time $t_0$, i.e. $\bar{B}^{t_0}$, by a generic $\mathcal{F}_{t_0}$-measurable continuous path $V$ and consider the corresponding SDE to \eqref{eq:sde} starting from $(X_0,V)$ at time $t_0$. More precisely, we consider the evolution $t \mapsto X^{(X_0,V)}(t_0,t)$ given by the SDE
\begin{align}\label{eq:sde-eta}
\begin{split}
d X^{(X_0,V)}(t_0,t) & = F(X^{(X_0,V)}(t_0,t) )dt + b(X^{(X_0,V)}(t_0,t)) dt + d V_{t} + d \widetilde{B}^{t_0}_{t} , \quad t \ge t_0, \\
X^{(X_0,V)}(t_0,t_0) &  = X_0.
\end{split}
\end{align}
The initial input at time $t_0$ consists of $X_0$ and $V$ which are $\mathcal{F}_{t_0}$-random variables in $\Rd$ and $\mathcal{C}_0([0,\infty),\Rd)$ respectively.
The innovation $\widetilde{B}^{t_0}$ plays the role of driving signal.
Given $t_0,X_0,V$,
one can define a solution with respect to $(\Omega, \mathcal{F}, \mathbb{F}, \mathbb{P})$ and $W$ in a similar fashion to Definition \ref{defsol-SDE}.
Earlier results for \eqref{eq:sde} rely solely on the regularisation properties of the innovation $\widetilde{B}^{t_0}$, and therefore, can be carried forward to equation \eqref{eq:sde-eta}, see Theorem \ref{thm:Phi-results} below.
In particular, when $x\in\Rd$ and $V=\bar B^{t_0}$, \eqref{eq:sde-eta} becomes \eqref{eq:sde}, and hence, by uniqueness, $(X^{(x,\bar B^{t_0})}(t_0,t))_{t \ge 0}$ is the solution to \eqref{eq:sde} starting at $x$. It will also be useful to consider the evolution from a deterministic initial input $(x,v)$. In Theorem \ref{cor:markov}, we show that there exists a jointly measurable map
\begin{align}\label{eq:Xmes}
    (t_0,t,x,v) \mapsto X(t_0,t,x,v) ,
\end{align}
such that for any $t \ge t_0$, almost surely in $\omega$, $X(t_0,t,x,v) = X^{(x,v)}(t_0,t)$. We also prove that for any $\mathcal{F}_{t_0}$-random variable $(X_0,V)$ and $t \ge t_0$, we have $X(t_0,t,X_0(\omega),V( \omega))(\omega)=X^{(X_0,V)}(t_0,t)(\omega)$ almost surely in $\omega$.
 
The map \eqref{eq:Xmes} will be important in defining a Markov evolution. In fact, a key point of \cite{Hairer} is that while the evolution \eqref{eq:sde} is not Markovian, the joint evolution of \eqref{eq:Xmes} and the increments of the noise is.
The evolution of the noise component takes place in the state space $\mathcal{H}_H \subset \mathcal{C}_0((-\infty,0],\R^d)$.
Then one defines for any $w \in \mathcal{H}_H$ and $r \leq 0$, the process 
\begin{align}\label{def:Z}
    Z(t_0,t,w)(r) := \bar{Z}(t_0,t,w)(r)-\bar{Z}(t_0,t,w)(0),
\end{align}
where
\begin{align*}
\bar{Z}(t_0,t,w)(r) =\;
\begin{cases}
w\bigl(r + t-t_0 \bigr),
& \text{ when } r+t \le t_0,\\[6pt]
 W_{r+t}-W_{t_0},
& \text{ when } t_0 < r+t,
\end{cases}
\end{align*}
Heuristically, this corresponds to concatenating the Wiener path $(W_r-W_{t_0})_{r\ge t_0}$ to $(w({r-t_0}))_{r\le t_0}$, and, to ensure that resulting process stays in $\mathcal{H}_H \subset \mathcal{C}_0((-\infty,0],\R^d)$, re-parametrising the time parameter to $(-\infty,0]$, then re-centring the process in space to $0$ at time $0$. If one ignores the time re-parametrisation, then  for each $t\ge t_0$,  $Z(t_0,t,w)$ corresponds to the entire trajectory up to time $t$ of the resulted centred concatenated path.
Note that if $w \equiv (W_{r+t_0}-W_{t_0})_{r \leq 0}$, then the resulting process $(Z (t_0,t,w)(r))_{r \leq 0}$ is nothing but $(W_{t+r}-W_{t})_{r\le 0}$, which has law $\mathbf{W}$.

To couple $Z(t_0,t,w)$ with \eqref{eq:Xmes}, we use the continuous map (see \cite[Lemma 3.8]{Hairer})
\begin{equation}\label{eq:operatorA} 
    \begin{aligned}
        \mathbf{A}^{t_0} : \mathcal{H}_H &\rightarrow \mathcal{C}([t_0,\infty),\Rd)
        \\ w &\mapsto  \Big( t \mapsto \alpha_H \int_{-\infty}^{t_0} \big( (t-u)^{H-1/2}-(t_0-u)^{H-1/2} \big) d w(u-t_0) \Big), 
    \end{aligned}
\end{equation}
and  consider the jointly measurable map $\Theta(t_0,t,\cdot,\cdot)$ over $\R^d \times \mathcal{H}_H$ defined by
\begin{align}\label{def:xi}
\Theta(t_0,t,x,w) = \left( X(t_0,t,x,\mathbf{A}^{t_0}(w)), Z(t_0,t,w) \right)
\end{align}
and for each $s\le t$, the corresponding operator
\begin{align}\label{def:semigroup}
(P_{s,t}f)(x,w) = \EE f(\Theta(s,t,x,w)),
\end{align}
for any bounded measurable function $f : \R^d \times \mathcal{H}_H \rightarrow \R$.

To connect the evolution $\Theta$ with \eqref{eq:sde}, \cite{Hairer} defines a generalised initial condition as a probability measure $\mu_0 \in \mathcal{P}(\R^d \times \mathcal{H}_H)$ such that its second marginal is  the left-sided Wiener measure $\mathbf{W}$.
It turns out (Theorem \ref{cor:markov}) that the evolution $\Theta$ is a Markov process with evolution family $P$ (see Definition \ref{def:semigroup}), which  becomes time-homogeneous when started from a generalised initial condition. 
Furthermore, because the second marginal of the law of $\Theta$ started from a generalised initial condition is always $\mathbf{W}$, the set of generalised initial conditions is invariant under $(P_{s,t})$. Hence, defining $P_t=P_{0,t}$, then $(P_t)_{t\ge0}$ forms a transition semigroup on the set of generalised initial conditions. 
As in \cite{Hairer}, we call $\mu$ an invariant measure (see Definition \ref{def:inv}) associated to \eqref{eq:sde-eta} if it is a generalised initial condition and it is invariant under the transition semigroup $(P_t)$, that is $P^*_{t} \mu = \mu$ for any $t \ge 0$ where $P_t^*$ is the adjoint operator.

In \cite{Hairer}, the invariant measure is built within the more general stochastic dynamical systems (SDS) framework, where one constructs the SDS from the deterministic flow corresponding to the SDE (seen as a function of time, the initial condition and the driving signal). 
When $b$ is smooth (so that \eqref{eq:sde-eta} can be treated as a random ordinary differential equation), the first component of $\Theta$ corresponds to the SDS and the second component corresponds to the stationary noise process. However, this approach is not well adapted when $b$ is singular since the solution of \eqref{eq:sde-eta} is not given as a continuous function of the driving noise. We have adopted a simpler construction, relying solely on the theory of Markov processes, which is well suited to our context.

\paragraph{Main results.}
As we have mentioned before, the well-posedness results for \eqref{eq:sde-eta} can be obtained by replacing $B$ by $\widetilde{B}^{t_0}$. This means changing the semi-norms defined in \eqref{eq:semi-norm} and \eqref{eq:semi-norm-2} -when applied to $X-B$ (where $X$ is the solution of \eqref{eq:sde})- in the following way: For any $\alpha>0$, any interval $I$, any $q \in [2,\infty)$, and any $m \in [q,\infty)$,
\begin{itemize}
    \item $\llbracket X- B \rrbracket_{\mathcal{C}^{\alpha}_{I} L_{m,q}}$ is replaced by $\llbracket X^{(X_0,V)}(t_0,\cdot) -\widetilde{B}^{t_0}-V \rrbracket_{\mathcal{C}^{\alpha}_{I} L_{m,q}} = \llbracket X^{(X_0,V)}(t_0,\cdot) -\widetilde{B}^{t_0} \rrbracket_{\mathcal{C}^{\alpha}_{I} L_{m,q}}$.
    \item $[X-B]_{\mathcal{C}^{\alpha}_{I} L_{m}}$ is replaced by $[X^{(X_0,V)}(t_0,\cdot)-\widetilde{B}^{t_0} -V]_{\mathcal{C}^{\alpha}_{I} L_{m}}$.
\end{itemize}
Moreover, since the innovation $(\widetilde{B}^{t_0}_t)_{t \ge t_0}$ is independent of $\mathcal{F}_{t_0}$, the well-posedness analysis of Section \ref{sec:well-posedness} still holds for an $\mathcal{F}_{t_0}$ random variable $(X_0,V)$. This holds for example if $X_0$ is $\mathcal{F}_{t_0}$-measurable and $V=\mathbf{A}^{t_0}(\bar W)$ where $\bar W$ is a continuous random process such that $\bar W_t$ is $\mathcal{F}_{t_0}$-measurable for any $t \leq 0$.
Well-posedness results for the SDE \eqref{eq:sde-eta} will be summarised in Theorem \ref{thm:Phi-results}, they are to be compared to Theorem \ref{thmmain-existence}, Theorem \ref{thmmain-uniqueness}, Proposition \ref{prop:stab-drift} and Proposition \ref{prop:stab-lip}.  Their proof is omitted, and the dependence of constants on $V$ is clarified. In particular, recall that the uniform-in-time bound on the moments goes through a comparison with the Ornstein-Uhlenbeck process, now it will go through a comparison with the process
\begin{align*}
    dU^{(X_0,V)}(t_0,t)  =  -U^{(X_0,V)}(t_0,t) dt + d \eta_t + d \widetilde{B}^{t_0}_t, \quad 
    U^{(X_0,V)}(t_0,t_0)  = X_0 .
\end{align*}
In particular, one can show (as in the proof of Proposition \ref{prop:unif-moments}) that the $m$-th moment of $X^{(X_0,V)}(t_0,t)-U^{(X-0,V)}(t_0,t)$ is bounded as follows:
\begin{align*}
 \|    X^{(X_0,V)}(t_0,t)-U^{(X_0,V)}(t_0,t)\|_{L_m} \leq C (1+C_0(\kappa,T-t_0) \sup_{r \in [t_0,T]} \|U^{(X_0,V)}(t_0,r)\|_{L_m})
\end{align*}
where $C_0$ is given by \eqref{eq:defC0}. Notice that $U^{(X_0,V)}(t_0,\cdot)$ is a Gaussian process with bounded variance, and its mean $m^{(X_0,V)}(t_0,t)$ is given by 
\begin{align*}
    m^{(X_0,V)}(t_0,t) := e^{-(t-t_0)} \EE (X_0)+ \EE (V_t)-e^{-(t-t_0)} \EE(V_{t_0}) - \int_{t_0}^{t} e^{-(t-u)} \EE(V_{u}) du .
\end{align*}
Hence, there exists a constant $C=C(m,H,x)$ such that for any $t \ge t_0$,
\begin{align*}
    \| U^{(X_0,V)}(t_0,t) \|_{L_m} \leq C (|m^{(X_0,V)}(t_0,t)|+1) . 
\end{align*}
Therefore, the bound on the moments of $X^{(X_0,V)}$ in the following theorem will depend on $V$ through $m^{(X_0,V)}(t_0,t)$.

\begin{theorem}\label{thm:Phi-results}
Let $m \in [2,\infty)$ and $\gamma \in (1-1/(2H),1)$. Let $b$ satisfy Assumption \ref{assumpb}, $F$ satisfy Assumptions \ref{assumpF} and \ref{Flip}. Let $T \in \R$, $t_0<T$, $X_0: \Omega \rightarrow \R^d$ be an $\mathcal{F}_{t_0}$-measurable random variable, and $V: \Omega \rightarrow \mathcal{C}([t_0,+\infty), \R^d)$ be a random variable such that $V_t$ is $\mathcal{F}_{t_0}$-measurable for any $t \ge t_0$. Then there exists a unique solution $(X^{(X_0,V)}(t_0,t))_{t \in [t_0,T]}$ to \eqref{eq:sde-eta} in the class
\begin{align}\label{eq:class-psi}
\mathcal{V}^{V}:= \{ Y: \,  [ Y-\widetilde{B}^{t_0}-V ]_{\mathcal{C}^{1/2}_{[t_0,T]} L_2} < \infty \}.
\end{align}
Moreover, the following statements hold.
\begin{enumerate}[label=(\roman*)]
\item There exist $C_1:=C_1(m,\gamma,H,\kappa,\Xi)$, $C_2:=C_2(m,\gamma,H,\kappa,\Xi)$ and $C_3:=C_3(m,\gamma,H,L_F,\Xi)$ such that for any $t_0 \leq s <t$ with $t-s \leq 1$, we have
 \begin{align}\label{eq:unif-Phi}
  \| X^{(X_0,V)}(t_0,t) \|_{L_m} & \leq C_1 (1+\sup_{r \in [t_0,T]} |m^{(X_0,V)}(t_0,r)|) \Big(1 + (T-t_0) \wedge \frac{1-e^{-\kappa_1(T-t_0)} }{\kappa_1}    \Big),
  \end{align}
  \begin{align*}
[X^{(X_0,V)}(t_0,\cdot)-\widetilde{B}^{t_0}-V ]_{\mathcal{C}^{1+H(\gamma \wedge 0)}_{[s,t]} L_{m}} & \leq C_2 (1+ \sup_{r \in [s,t]} \| X^{(X_0,V)}(t_0,r) \|_{L_m})  ,
\end{align*}
and
\begin{align}\label{eq:inf-Phi}
 \llbracket X^{(X_0,V)}(t_0,\cdot)-\widetilde{B}^{t_0}  \rrbracket_{\mathcal{C}^{1+H(\gamma \wedge 0)}_{[s,t]} L_{m,\infty}} \leq C.  
 \end{align}
 \item Let $g$ satisfy Assumption \ref{assumpb}, and denote by $X_g^{(X_0,V)}$ the unique solution to \eqref{eq:sde-eta} in the class $\mathcal{V}^V$, with initial input $(X_0,V)$ and singular drift $g$, then there exists a constant $C:=C(m,\gamma,H,L_F,\Xi,T-t_0)$ such that for all $t \in [t_0,T]$
\begin{align}\label{eq:cont-drift}
\| X^{(X_0,V)}(t_0,t) -X_g^{(X_0,V)}(t_0,t)  \|_{L_m} \leq C \|b-g\|_{\mathcal{C}^{\gamma-1}} .
\end{align}
 \item If $\kappa_2=0$, then there exists a universal constant $C$ and a constant $\mathbf{M}=\mathbf{M}(m,\gamma,H,L_F,\Xi)$ such that for any $\mathcal{F}_{t_0}$-measurable random variable $Y_0$ and any $t \in [t_0,T]$, we have
\begin{align}\label{eq:decay-Phi}
\begin{split}
 &  \| X^{(X_0,V)}(t_0,t)-X^{(Y_0,V)}(t_0,t) \|_{L_{m}} \\ & \leq \|X_0-Y_0\|_{L_m} \exp \{(-\kappa_1+\mathbf{M} \|b\|_{\mathcal{C}^\gamma} (1+\|b\|_{\mathcal{C}^\gamma})^{\frac{4}{1+H(\gamma-1)}}) (t-t_0) \} . 
\end{split}
\end{align}
\end{enumerate}
\end{theorem}
The well-posedness assumptions of Theorem \ref{thm:Phi-results} will always be assumed in the following results. From now on, for any $t_0 \in \R$, for any $\mathcal{F}_{t_0}$-measurable random variable $(X_0,V): \Omega \rightarrow \R^d \times \mathcal{C}([t_0,+\infty),\R^d)$, $X^{(X_0,V)}(t_0,\cdot)$ will denote the unique solution to \eqref{eq:sde-eta} in the class $\mathcal{V}^V$.
The following theorem proves the existence of the map $(x,v) \mapsto X(t_0,t,x,v)$, hinted at in \eqref{eq:Xmes}, for any $t \ge t_0$ and states that $\Theta$ defined in \eqref{def:xi} is a Markov process and justifies defining an invariant measure as a generalised initial condition that satisfies $P^*_t \mu = \mu$ for all $t \ge 0$. This is a classical result when $b$ is smooth. Here the challenge lies in proving the result for singular $b$. First, we recall the definition of evolution families and time non-homogeneous Markov processes.

\begin{definition}
Let $t_0 \in \R$ and $E$ be a Polish space, and denote by $\mathcal{B}_b(E)$ the space of bounded measurable functions $f:E \rightarrow \R$.

We say that $P:=(P_{s,t})_{t_0 \leq s \leq t}$ is a evolution family on $E$ if for any $t_0 \leq s \leq u \leq t$ $P_{s,t}$ maps $\mathcal{B}_b(E)$ to $\mathcal{B}_b(E)$, $P_{s,s}$ is the identity operator and $P_{s,u}P_{u,t}=P_{s,t}$. We say that $P:=(P_{s,t})_{t_0 \leq s \leq t}$ is a Feller evolution family if it is an evolution family and if for any $t_0 \leq s \leq t$, $P_{s,t}$ maps $\mathcal{C}(E)$ to $\mathcal{C}(E)$.

Given an evolution family $P$, we say that $(Y_t)_{t \ge t_0}$ is a Markov process with evolution family $P$ if for any $t_0 \leq s \leq t$ and any $f \in \mathcal{B}_b(E)$, we have
\begin{align*}
    \mathbb{E}^s f(Y_t) = P_{s,t}f(Y_s) \text{ a.s}.
\end{align*}
\end{definition}

\begin{theorem}\label{cor:markov}
Let $\gamma \in (1-1/(2H),1)$. Let $b$ satisfy Assumption \ref{assumpb} and $F$ satisfy Assumptions \ref{assumpF} and \ref{Flip}. Then the following statements hold.
\begin{enumerate}[label=(\roman*)]
\item For any $t_0 \in \R$ and any $t \ge t_0$, there exists a jointly-measurable map $(x,v) \mapsto X(t_0,t,x,v)$ such that for all $(x,v) \in \R^d \times \mathcal{C}([t_0,\infty),\R^d)$,
\begin{align*}
 \mathbb{P}(X^{(x,v)}(t_0,t)=X(t_0,t,x,v))=1 . 
\end{align*}
\item For any $t_0 \in \R$ and any $(x,w) \in \R^d \times \mathcal{H}_H$, define the evolution $\Theta:=(\Theta(t_0,t,x,w))_{t \ge t_0}$ as in \eqref{def:xi} and the family of operators $P:=(P_{s,t})_{t_0 \leq s \leq t}$ as in \eqref{def:semigroup}. Then, $P$ is a Feller evolution family on $\R^d \times \mathcal{H}_H$, and $\Theta$ is a Markov process with evolution family $P$.
\item For any $t_0 \in \R$, any $t_0 \leq s \leq t$ and any generalised initial condition $\mu_0$, we have that $P^*_{s,t}\mu_0=P^*_{0,t-s}\mu_0$, and $(P_r^*\mu_0)_{r\ge0} := (P^*_{0,r}\mu_0)_{r\ge0}$ forms a semigroup, where for any $r\ge 0$, $P^*_{0,r}$ denotes the adjoint of the operator $P_{0,r}$.
\end{enumerate}
\end{theorem}

The proof of Theorem \ref{cor:markov} is established in Section \ref{subsec:link-SDEs}. We now provide a rigorous definition of an invariant measure.
\begin{definition}\label{def:inv}
Under the assumptions of Theorem \ref{cor:markov}, we say that $\mu$ is an invariant measure associated to \eqref{eq:sde-eta} if $\mu$ is a generalised initial condition and satisfies $P^*_t \mu=\mu$ for any $t \ge t_0$.
\end{definition}
The final theorem deals with the existence and uniqueness of invariant measures when $\kappa_1>0$.

\begin{theorem}\label{thm inv}
Let $\gamma \in (1-1/(2H),1)$ and let $b$ satisfy Assumption \ref{assumpb}. Let $F$ satisfy Assumptions \ref{assumpF} and \ref{Flip} . Then the following statements hold.
\begin{enumerate}[label=(\roman*)]
\item If $\kappa_1 > 0$ in \eqref{eq:Fdissipative}, then there exists an invariant measure associated to \eqref{eq:sde-eta} that has finite moments.
\item Let $m \in [2,\infty)$ and recall the constant $\beta(m,\|b\|_{\mathcal{C}^\gamma})$ defined in \eqref{eq:defbeta}. If $\kappa_1 >0$, $\kappa_2=0$ and the $\mathcal C^{\gamma}$-norm of $b$ is sufficiently small so that $\beta(2,\|b \|_{\mathcal{C}^{\gamma}}) <0$, then there exists a unique invariant measure $\mu$ associated to the SDE \eqref{eq:sde-eta}. Moreover, there exists a constant $C:=C(m,\gamma,H,\kappa,\Xi)$ such that for any $t_0 \in \R$, any $t \ge t_0$ and any $\mathcal{F}_{t_0}$-measurable random variable $(X_0,\bar W)$ whose law is a generalised initial condition, we have
\begin{align}\label{eq:conv-inv}
        \mathcal{W}_m(\mathcal{L}(\Theta^{(X_0,\bar W)}(t_0,t), \mu) \leq C \exp\{\beta(m,\|b\|_{\mathcal{C}^\gamma})   (t-t_0)\}.
\end{align}
\end{enumerate}
\end{theorem}
The proof of Theorem \ref{thm inv} is established in Section \ref{sec:inv}. The condition $\beta(2, \| b\|_{\mathcal C^\gamma}) <0$ should be understood as an analogue to the case where $b$ is Lipschitz with a Lipschitz constant $L_b$, where the exponential decay follows if $-\kappa_1 + L_b <0$.

\subsection{Markov property: Proof of Theorem \ref{cor:markov}}\label{subsec:link-SDEs}
Before moving on to proving existence and uniqueness of the invariant measure, the goal of this section is to prove Theorem \ref{cor:markov} which motivates the definition of the invariant measure.
We will always assume that $F$ satisfies Assumptions \ref{assumpF} and \ref{Flip}, $\gamma \in (1-1/(2H),1)$ and $b$ satisfies Assumption \ref{assumpb}. Therefore, existence and uniqueness of solutions to \eqref{eq:sde} (in the class \eqref{eq:class-uniqueness}) and \eqref{eq:sde-eta} (in the class \eqref{eq:class-psi}) will follow from Theorem \eqref{thmmain-uniqueness} and Theorem \eqref{thm:Phi-results}.
We start by proving the joint-measurability of the map $(x,v,\omega) \mapsto X^{(x,v)}(t_0,t)(\omega)$ in Lemma \ref{lem:measurablePsi}. The proof is based on the following proposition, which gives the continuity of $X^{(x,v)}(t_0,t)$ with respect to $v$.

\begin{prop}\label{prop:cont-history}
Let $x \in \R^d$, $v^1,v^2 \in \mathcal{C}([t_0,+\infty), \R^d)$, $T \in \R$ and $t_0 <T$. 
For any $m \in [2,\infty)$, there exists a constant $C:=C(m,\gamma,H,L_F,\Xi,T-t_0)$ such that for all $t \in [t_0,T]$, the following holds
\begin{align}\label{eq:cont-history}
\| X^{(x,v^1)}(t_0,t) -X^{(x,v^2)}(t_0,t)  \|_{L_m} \leq C  \sup_{r \in [t_0,T]} |v^{1}_r-v^{1}_{t_0} - (v^{2}_r-v^2_{t_0}) |   .
\end{align}
\end{prop}
\begin{proof}
Let $x \in \R^d$, $T \in \R$ and $t_0 < T$.
As in the beginning of the proof of Proposition \ref{prop:stab-lip}, we can assume without any loss of generality that $b$ is a smooth function. Moreover, for any continuous function $v \in \mathcal{C}([t_0,+\infty),\R^d)$, define $\tilde{v}=v-v_{t_0}$, then $X^{(x,v)}(t_0,\cdot) = X^{(x,\tilde v)}(t_0,\cdot)$, hence we can also assume without any loss of generality that $v^1_{t_0}=v^2_{t_0}=0$.

For simplicity, we write $X^i_t \equiv X^{(x,v^i)}(t_0,t)$. Let $(\tilde s, \tilde t) \in \Delta_{[t_0,T]}$ such that $\tilde t- \tilde s \leq 1$, and let $(s,t) \in \Delta_{[\tilde s, \tilde t]}$, then
\begin{align*}
X_t^1-X_t^2 - \EE^s(X_t^1-X_t^2) & = \int_s^t F(X_r^1)-F(X_r^2)-\EE^s \left( F(X_r^1)-F(X_r^2) \right) dr \\ & \quad + \int_s^t b(X_r^1) -b(X_r^2) dr -\EE^s \int_s^t b(X_r^1)-b(X_r^2) dr .
\end{align*}
Using the Lipschitz property of $F$ and taking the $L_m$ norm, we get
\begin{align*}
\| X_t^1-X_t^2 - \EE^s(X_t^1-X_t^2)  \|_{L_m} \leq C \int_s^t \|X_r^1-X_r^2\|_{L_m} dr + C\| \int_s^t b(X_r^1) -b(X_r^2) dr \|_{L_m} .
\end{align*}
Using Proposition \ref{prop:reg2} with $\psi=X^1-\widetilde{B}^{t_0}$, $\phi=X^2-\widetilde{B}^{t_0}$, $\tau=1/2$, and recalling that $\llbracket X^1-\widetilde{B}^{t_0} \rrbracket_{\mathcal{C}^{1+H(\gamma \wedge 0)}_{[s,t]} L_{1,\infty}} \leq C$ by \eqref{eq:inf-Phi}, we get that

\begin{align*}
\| X_t^1-X_t^2 - \EE^s(X_t^1-X_t^2)  \|_{L_m} & \leq C \sup_{r \in [S,T]} \|X_r^1-X_r^2\|_{L_m}    (t-s)^{1+H(\gamma-1)} \\ & \quad +C   \llbracket X^1-X^2 \rrbracket_{\mathcal{C}^{1/2}_{[\tilde s, \tilde t]}L_m}  (t-s)^{1+H(\gamma-1)} .
\end{align*}
Dividing by $(t-s)^{1/2}$ and taking the supremum over $(s,t) \in \Delta_{[\tilde s,\tilde t]}$, we get that
\begin{align*}
 \llbracket X^1-X^2 \rrbracket_{\mathcal{C}^{1/2}_{[\tilde s,\tilde t]}L_m}& \leq C   \sup_{r \in [S,T]} \|X_r^1-X_r^2\|_{L_m}  (\tilde t- \tilde s)^{1/2+H(\gamma-1)} \\ & \quad +C    \llbracket X^1-X^2 \rrbracket_{\mathcal{C}^{1/2}_{[\tilde s,\tilde t]}L_m}  (\tilde t-\tilde s)^{1/2+H(\gamma-1)} .
\end{align*}
Let $\ell := (2C)^{\frac{1}{1/2+H(\gamma-1)}} \wedge 1$, then using that $ \llbracket X^1-X^2 \rrbracket_{\mathcal{C}^{1/2}_{[\tilde{s},\tilde{t}]}L_m} <+\infty$ (since $X^1$ and $X^2$ satisfy \eqref{eq:inf-Phi}), we get
\begin{align}\label{eq:holderbound-history}
\llbracket X^1-X^2 \rrbracket_{\mathcal{C}^{1/2}_{[\tilde{s},\tilde{t}]}L_m}& \leq C \sup_{r \in [\tilde{s},\tilde{t}]}\|X_r^1-X_r^2\|_{L_m}  .
\end{align}
Next, we compare $X^1$ and $X^2$ in the supremum norm. Let $t \ge t_0$, then
\begin{align*}
X_t^1-v^{1}_{t}-(X^2_t-v^{2}_{t}) = \int_{t_0}^t F(X_r^1) -F(X_r^2) dr + \int_{t_0}^t b(X_r^1)-b(X_r^2) dr .
\end{align*}
Let $(t_k)_{k \in \llfloor 0,N \rrfloor}$ be a partition of $[t_0,t]$ such that $t_{k+1}-t_k \leq \ell$, then using that $F$ is Lipschitz and taking the $L_m$ norm, we get
\begin{align*}
\| X_t^1-v^{1}_{t}-(X^2_t-v^{2}_{t}) \|_{L_m} \leq \int_{t_0}^t \|X_r^1-X_r^2\|_{L_m} dr + \sum_{k=0}^{N-1} \| \int_{t_k}^{t_{k+1}} b(X_r^1)-b(X_r^2) dr \|_{L_m} .
\end{align*}
Applying Proposition \ref{prop:reg2} as before and using $\llbracket X^1-\widetilde{B}^{t_0} \rrbracket_{\mathcal{C}^{1+H(\gamma \wedge 0)}_{[t_k,t_{k+1}]} L_{1,\infty}} \leq C$, we get that
\begin{align*}
\| X_t^1-\eta^{1}_{t}-(X^2_t-\eta^{2}_{t})  \|_{L_m} & \leq \int_{t_0}^t \|X_r^1-X_r^2\|_{L_m} dr \\ & \quad +C  \sum_{k=0}^{N-1}  \Big(  \sup_{r \in [t_k,t_{k+1}]} \|X_r^1-X_r^2 \|_{L_m} (t_{k+1}-t_k)^{1+H(\gamma-1)} \\ & \quad + \llbracket X^1-X^2 \rrbracket_{\mathcal{C}^{1/2}_{[t_k,t_{k+1}]}L_m}  (t_{k+1}-t_k)^{1+H(\gamma-1)} \Big) .
\end{align*}
Using \eqref{eq:holderbound-history}, we have
\begin{align*}
\| X_t^1-v^{1}_{t}-(X^2_t-V^{2}_{t}) \|_{L_m} & \leq \int_{t_0}^t \|X_r^1-X_r^2\|_{L_m} dr \\ & \quad +C \sum_{k=0}^{N-1}   \sup_{r \in [t_k,t_{k+1}]} \|X_r^1-X_r^2 \|_{L_m} (t_{k+1}-t_k)^{1+H(\gamma-1)} .
\end{align*}
Hence, we conclude that
\begin{align*}
\|  X_t^1-\eta^{1}_{t}-(X^2_t-v^{2}_{t}) \|_{L_m} & \leq C \sum_{k=0}^{N-1}  \Big( \sup_{r \in [t_k,t_{k+1}]} \| X_r^1-\eta^{1}_{r}-(X^2_r-v^{2}_{r}) \|_{L_m}  \\ & \quad +  \sup_{r \in [t_k,t_{k+1}]} \left|  v^{1}_{r}-v^{2}_{r}  \right| \Big)  (t_{k+1}-t_k)^{1+H(\gamma-1)} .
\end{align*}
Finally, a Gr\"onwall-type argument shows that
\begin{align*}
\|  X_t^1-v^{1}_{t}-(X^2_t-v^{2}_{t}) \|_{L_m} & \leq C \sup_{r \in [t_0,T]} \left|  v^{1}_{r}-v^{2}_{r}  \right| ,
\end{align*}
which concludes the proof.
\end{proof}

The following lemma gives the existence of the map $(x,v) \mapsto X(t_0,t,x,v)$ hinted at in \eqref{eq:Xmes}. This is a classic result when $b$ is a smooth function. Indeed, one can fix $t_0 \in \R$, $(x, f) \in \R^d\times \mathcal{C}([t_0, +\infty), \R^d)$ and consider the ODE $du_t=F(u_t)dt + b(u_t)dt + df_t$, $u_{t_0}=x$. Then one can define solution map 
\begin{align}\label{eq:gamma}
\Gamma_b: \R^d \times \mathcal{C}([t_0,+\infty),\R^d) \rightarrow \mathcal{C}([t_0,+\infty),\R^d),
\end{align}
and show that for any $T \ge t_0$, $\Gamma_b$ is continuous from $\R^d \times \mathcal{C}([t_0,+\infty),\R^d)$ to $\mathcal{C}([t_0,T],\R^d)$ (see, e.g., \cite[Proposition 4.3]{da2006introduction}). In this case, for any $t \ge t_0$, one can define $(x,v) \mapsto X(t_0,t,x,v)$ as $$X(t_0,t,x,v) := \Gamma_b(x,v+\widetilde{B}^{t_0})(t),$$
and check that $(X(t_0,t,x,v))_{t \ge t_0}$ solves the SDE \eqref{eq:sde-eta} with initial input $(x,v)$. Hence it follows by uniqueness of solutions in $\mathcal{V}^{v}$ that for any $t \ge t_0$, almost surely we have $X(t_0,t,x,v)=X^{(x,v)}(t_0,t)$. However, such $\Gamma_b$ for $b \in \mathcal{C}^\gamma$ and $\gamma <0$ does not exist and one needs to rely on smooth approximations of $b$ to get the same result. That is the purpose of the following lemma.

\begin{lemma}\label{lem:measurablePsi}
For any $t_0 \in \R$ and $t \ge t_0$, there exists a jointly measurable map  $(x,v,\omega) \in \R^d \times \mathcal{C}([t_0,\infty) ,\R^d) \times \Omega \mapsto X(t_0,t,x,v)(\omega)$ such that for any $(x,v) \in \R^d \times \mathcal{C}([t_0,\infty) ,\R^d)$,
\begin{align*}
    \mathbb{P}(X^{(x,v)}(t_0,t)=X(t_0,t,x,v))=1 .
\end{align*}
\end{lemma}
\begin{proof}
Let $t_0 \in \R$ and $x \in \R^d$.
We take $(b^n)_{n \in \N}$ a sequence of smooth functions converging to $b$ in $\mathcal{C}^{\gamma}$ such that $\sup_{n \in \N} \|b^n\|_{\mathcal{C}^\gamma} \leq \Xi$, we write $X^{n,(x,v)}$ for the solution of \eqref{eq:sde-eta} with drift $b^n$.

Let $t \ge t_0$.
For any $x \in \R^d$ and any continuous function $v \in \mathcal{C}([t_0,+\infty),\R^d)$, define $f(x,v,\omega):= X^{(x,v)}(t_0,t)(\omega)$. Then for each $x,v$, $f(x,v,\cdot)$ is a measurable function, since $X^{(x,v)}$ is measurable in $\omega$ as the limit of $X^{n,(x,v)}$ by \eqref{eq:cont-drift}. Moreover, for any sequence $x_n,v_n$ that converges to $x,v$ in the space $\R^d \times \mathcal{C}([t_0,+\infty),\R^d)$, we have that $X^{(x_n,v_n)}(t_0,t)$ converges to $X^{(x,v)}(t_0,t)$ in probability by Proposition \ref{prop:cont-history} and \ref{eq:decay-Phi} (taking $\kappa_1=-L_F$). So it follows that $f$ is stochastically continuous, and one can apply \cite[Lemma 4.31]{scheutzow2013stochastic} to conclude that $f$ has a jointly measurable modification.
\end{proof}

Let $(X_0,V)$ be an $\mathcal{F}_{t_0}$-measurable random variable. The following lemma confirms an expected result: $X^{(X_0,V)}(t_0,t) = X(t_0,t,X_0,V)$.
This is a classic result when $b$ is a smooth function. Indeed, one can check that the evolution
\begin{align}\label{eq:confusion-smooth}
    \left( X(t_0,t,X_0,V)\right)_{t \ge t_0} =\left( \Gamma_b(X_0,V+\widetilde{B}^{t_0})(t) \right)_{t \ge t_0}
\end{align}
corresponds to the solution of the SDE \eqref{eq:sde-eta} with initial input $(X_0,V)$. Hence, by uniqueness of solutions in the class $\mathcal{V}^V$, it follows that $X(t_0,t,X_0(\omega),V(\omega))(\omega)=X^{(X_0,V)}(t_0,t)(\omega)$. The following lemma extends this result to $b$ in $\mathcal{C}^\gamma$ and is needed in the proof of Theorem \ref{cor:markov}.
\begin{lemma}\label{lem:confusion-xi}
Let $t_0 \in \R$, $(X_0,V): \Omega \mapsto \R^d \times \mathcal{C}([t_0,\infty),\R^d)$ be an $\mathcal{F}_{t_0}$-measurable random variable. Then for any $t \ge t_0$, almost surely in $\omega$, we have
\begin{align}\label{eq:xi=tildexi}
X(t_0,t,X_0(\omega),V(\omega)) (\omega) =    X^{(X_0,V)} (t_0,t)(\omega) .
\end{align}
\end{lemma}
\begin{proof}
Let $t_0 \in \R$ and $x \in \R^d$.
We take $(b^n)_{n \in \N}$ a sequence of smooth functions converging to $b$ in $\mathcal{C}^{\gamma}$ such that $\sup_{n \in \N} \|b^n\|_{\mathcal{C}^\gamma} \leq \Xi$, we write $X^{n,(x,v)}=\Gamma_{b^n}(x,v+\widetilde{B}^{t_0})$ for the solution of \eqref{eq:sde-eta} with smooth drift $b^n$ and initial input $(x,v)$.

Let $t \ge t_0$.
For any $n \in \mathbb{N}$, any $x \in \R^d$ and any $v \in \mathcal{C}([0,+\infty),\R^d)$, define $f^n(x,v,\omega):= |X^{n,(x,v)}(t_0,t)(\omega)-X(t_0,t,x,v)(\omega)| \wedge 1$. Recall that Lemma \ref{lem:measurablePsi} gives the joint measurability of $(x,v,\omega) \mapsto X(x,v,\omega)$. Moreover, since $(x,v,\omega) \mapsto X^{n,(x,\omega)}$ is jointly measurable (this follows from the continuity of $\Gamma_{b^n}$), we also get that $f^n$ is jointly measurable.

Moreover, for any fixed $x,v$, $f^n(x,v,\cdot)$ is independent of $\mathcal{F}_{t_0}$. Then by Lemma \ref{lem:disintegration}, taking $J=(X_0,V)$ we get that
\begin{align*}
    \EE f^n(X_0(\cdot),V(\cdot),\cdot) = \EE g^n(X_0,V),
\end{align*}
where $g^n(x,v)=\EE f^n(x,v,\cdot)$.
Moreover, by \eqref{eq:cont-drift}, we have $$\sup_{(x,v)} g^n(x,v) =\sup_{(x,v)} \EE\|X^{n,(x,v)}(t_0,t)-X^{(x,v)}(t_0,t)\| \leq C \| b^n-b\|_{\mathcal{C}^{\gamma-1}}.$$ Hence, it follows that
\begin{align*}
 \EE |X^{n,(X_0,V)}(t_0,t)- X(t_0,t,X_0,V)| \wedge 1=  \EE f^n(X_0(\cdot),V(\cdot),\cdot)  \leq C \| b^n-b\|_{\mathcal{C}^{\gamma-1}}.
\end{align*}
Recall by \eqref{eq:confusion-smooth} that $(X^{n,(X_0,V)}(t_0,t))_{t \ge t_0}$ corresponds to the solution of \eqref{eq:sde-eta} with smooth drift $b^n$. Hence by Proposition \ref{eq:cont-drift} (stability with respect to the drift), letting $n$ go to infinity, we deduce that
\begin{align*}
\EE |X^{(X_0,V)}(t_0,t)-X(t_0,t,X_0,V)| =0 .
\end{align*}
\end{proof}
\begin{proof}[Proof of Theorem \ref{cor:markov}.]
\textbf{Proof of $(i)$.} 
The existence of the jointly measurable map $(x,v,\omega) \mapsto X(t_0,t,x,v)$ follows from Lemma \ref{lem:measurablePsi} which also states that it is a modification of the map $(x,v) \mapsto X^{(x,v)}(t_0,t)$.

\textbf{Proof of $(ii)$.}
Let $t_0 \in \R$, $(x,v)\in \Rd\times \mathcal{C}([0,\infty),\R^d)$. For any $x \in \R^d$ and $w \in \mathcal{H}_H$, we define $(\Theta(t_0,t,x,w))_{t \ge t_0}$ as in \eqref{def:xi} and the family of operators $(P_{s,t})_{t_0 \leq s \leq t}$ as in \eqref{def:semigroup}.
Let $f$ be a bounded measurable function on $\Rd\times\mathcal{H}_H$ and let $t\ge s \ge t_0$. From \eqref{def:semigroup}, $P_{s,t}f$ is also a bounded measurable function and $P_{s,s}$ is the identity operator. 

Next, we prove that
\begin{align}\label{eq:markovprop0}
\EE^s f(\Theta(t_0,t,x,w)) 
& = P_{s,t} f(\Theta(t_0,s,x,w)) \quad\text{a.s.} .
\end{align}
Note that taking expectation, we obtain that $P_{t_0,t}f = P_{t_0,s}P_{s,t}f$. Since this is true for any $t_0$, it proves that $P$ is an evolution family and therefore that $(\Theta(t_0,t,x,w))_{t\ge t_0}$ is a Markov process with evolution family $P$. In order to prove \eqref{eq:markovprop0}, we prove
equivalently that
\begin{align}\label{eq:markovprop1}
\EE Y f(\Theta(t_0,t,x,w)) 
& = \EE Y P_{s,t} f(\Theta(t_0,s,x,w)) 
\end{align}
for any bounded $\mathcal{F}_s$-measurable random variable $Y$. 
Recall the definition of $Z$ in \eqref{def:Z}. Let us define the evolution $\Theta^{(x,w)}(t_0,\cdot)$ by
$$\Theta^{(x,w)}(t_0,t):= \left( X^{(x,\mathbf{A}^{t_0}(w))}(t_0,t) ,Z(t_0,t,w) \right), \quad t \ge t_0.$$
Using Lemma \ref{lem:confusion-xi}, we can write 
\begin{align*}
\EE Y f(\Theta(t_0,t,x,w)) =\EE Yf(\Theta^{(x,w)} (t_0,t)).
\end{align*}
Since the first component of the process $(\Theta^{(x,w)} (t_0,t))_{t \ge s}$ solves the SDE \eqref{eq:sde-eta} with initial input $\Theta^{(x,w)}(t_0,s)$, it follows by uniqueness of solutions in the class $\mathcal{V}^{Z(t_0,s,w)}$ that
\begin{align*}
\EE Y f(\Theta(t_0,t,x,w)) =\EE Yf(\Theta^{\Theta^{(x,w)}(t_0,s)} (s,t) ).
\end{align*}
Using Lemma \ref{lem:confusion-xi} again, we get
\begin{align*}
 \EE Y f(\Theta(t_0,t,x,w))   = \EE h(Y,\Theta(t_0,s,x,w)),
\end{align*}
where  $\R\times\Rd\times \mathcal{H}_H\times\Omega\ni (y,z,\zeta,\omega)\mapsto h(y,z,\zeta,\omega) := yf(\Theta(s,t,z,\zeta)(\omega))$ is a measurable function (in view of Lemma \ref{lem:measurablePsi} and continuity of the operator $\mathbf{A}^{t_0}$).
Observing that $\EE h(y,z,\zeta)=yP_{s,t}f(z,\zeta)$ and $h$ is independent from $\mathcal{F}_s$, and taking $J:=(Y,\Theta(t_0,s,x,w))$, we can apply Lemma \ref{lem:disintegration} to obtain \eqref{eq:markovprop1}, and deduce that $P$ is an evolution family. Moreover, using the continuity of the operator $\mathbf{A}^{t_0}$, Proposition \ref{prop:cont-history} and the continuity with respect to the initial condition (take $\kappa_1=-L_F$ in \eqref{eq:stab-lip}), one gets that $P_{s,t}$ maps $\mathcal{C}(\R^d \times \mathcal{H}_H, \R)$ to $\mathcal{C}(\R^d \times \mathcal{H}_H, \R)$ for any $t\ge s \ge  t_0$, which proves that $P$ is a Feller evolution family.

\textbf{Proof of $(iii)$.}
Let $x \in \R^d$ and $\mu_0=\delta_x \times \mathbf{W}$ be a generalised initial condition. Let $g: \R^d \times \mathcal{H}_H \rightarrow \R$ be a bounded measurable and Lipschitz continuous function. We show that $(P^*_{s,t}\mu_0) (g)=(P^*_{0,t-s}\mu_0)(g)$ for any $t \ge s$. Note that if this is true for smooth $b$, then using the stability estimate \eqref{eq:cont-drift} and the Lipschitz continuity of $g$, the same equality still holds for $b \in \mathcal{C}^\gamma$. Hence, we assume without any loss of generality that $b$ is a smooth function.

Recall the definition \eqref{def:xi} and \eqref{eq:gamma}.
For any $s \in \R$, define $\bar W^s = (W_{u+s}-W_s)_{u \leq 0}$, and note that for any $s \in \R$, $\bar W^s$ has law $\mathbf{W}$. Let $t \ge s$, using Lemma \ref{lem:confusion-xi}, we have that
\begin{align*}
    P^*_{s,t}\mu_0(g)= \EE g (\Theta(s,t,x,\bar W^s)) & = \EE g(\Theta^{(x,\bar W^s)}(s,t))\\
    & = \EE g \left( X^{(x,\mathbf{A}^{s}(\bar W^s))}(s,t), Z(s,t,\bar W^s) \right).
\end{align*}
Using the stationarity under time-shifts of the fBm and the Wiener process, one can show that $\left(  X^{(x,\mathbf{A}^{s}(\bar W^s))}(s,t) ,Z(s,t,\bar W^s)   \right)$ and $\left(  X^{(x,\mathbf{A}^{0}(\bar W^0))}(0,t-s) ,Z(0,t-s,\bar W^0)   \right)$ have the same law. Hence, it follows that 
\begin{align*}
    P^*_{s,t}\mu_0(g)=  \EE g\left( X^{(x,\mathbf{A}^0(\bar W^0)}(t_0,t), Z(0,t-s,\bar W^0) \right) .
\end{align*}
Using Lemma \ref{lem:confusion-xi} again, we conclude that $P^*_{s,t}\mu_0(g)= \EE g (\Theta(0,t-s,x,\bar W^0)) = P^*_{0,t-s} \mu_0(g)$. Finally, the semigroup property follows from the fact that $P$ is an evolution family.
\end{proof}

\subsection{Existence and uniqueness of an invariant measure: Proof of Theorem \ref{thm inv}}\label{sec:inv}
Recall that we fixed a probability space $(\Omega, \mathcal{F}, \mathbb{F}, \mathbb{P})$ and an $\mathbb{F}$-Wiener process $W$, and that $\mathbf{W}$ is left-sided Wiener measure.
Let $\mu_0 = \delta_0 \times \mathbf{W}$. Define $\bar W= (W_{t})_{t \leq 0}$. Then $(x,\bar{W})$ has law $\mu_0$ and $\mathbf{A}^{0}(\bar{W}) = \bar{B}^{0}$.
\paragraph{Proof of $(i)$.}
We start by proving existence of an invariant measure.
Recall the operator \eqref{def:semigroup} and the definition $(P_t)_{t \ge 0}=(P_{0,t})_{t \ge 0}$. Theorem \ref{cor:markov} shows that $P^*_t\mu_0$ is the transition semigroup for the time-homogeneous Markov process $\left(\Theta(0,t,X_0,\bar W) \right)_{t \ge 0}$.
For any $T>0$, consider the ergodic averages $$\mathcal{R}_T \mu_0 = \frac{1}{T}\int_0^T P^*_{t}\mu_0.$$ 
Let $X_t^{V}$ denote the solution of \eqref{eq:sde-eta} starting from initial input $(0,V)$ at time $0$. Using \eqref{eq:unif-Phi} with $V=\bar B^{0}$ and recalling that $\kappa_1>0$, we have
\begin{align}\label{eq:lyapunov}
\mathbb{E} \left| X^{\bar{B}^{0}}_t\right|^m \leq C, \quad \forall m \ge 2.
\end{align}
Let $R>0$ and let $K_R =(-\infty,-1/\sqrt R) \cup (1/\sqrt R,+\infty)$. Then
\begin{align*}
\mathcal{R}_T \mu_0 (K_R^c)=\frac{1}{T}\int_0^T P_t^*\mu_0(K_R^c) dt & = \frac{1}{T} \int_0^T \int_{\R^d \times \mathcal{H}_H} \mathds{1}_{K_R^c}(x)   P_t^*\mu_0(dx,dw) dt \\ & =  \frac{1}{T} \int_0^T \EE \Big( \EE \mathds{1}_{K_R^c} \left(X(0,t,0,\mathbf{A}^{0}(w)) ) \right) \Big)_{w=\bar W} dt.
\end{align*}
Since $X(0,t,x,v)$ is independent from $\mathcal{F}_{0}$, using Lemma \ref{lem:disintegration} with $J=(x,\bar{B}^{0})$, we get that 
\begin{align*}
\mathcal{R}_T \mu_0 (K_R^c) = \frac{1}{T}\int_0^T \EE \mathds{1}_{K_R^c} (X(0,t,0,\bar{B}^{0})) \leq \frac{1}{R} \EE |X(0,t,0,\bar{B}^{0})|^2 .    
\end{align*}
Using Lemma \ref{lem:confusion-xi} and \eqref{eq:lyapunov} with $m=2$, we conclude that
\begin{align*}
    \mathcal{R}_T \mu_0 (K_R^c) \leq \frac{1}{R} \EE |X^{\bar{B}^{0}}_t|^2 \leq \frac{C}{R} .
\end{align*}

Combining the above with the fact that the projection of $\mathcal{R}_T\mu_0$ over $\mathcal{H}_H$ is $\mathbf{W}$, we get tightness of the sequence $(\mathcal{R}_T \mu_0)_{T > 0}$. Then, recalling that $\R^d \times \mathcal{H}$ is a Polish space and using the Feller property, we obtain, by the standard Krylov-Bogolyubov argument (see, for example, \cite[Section 11.2]{da2014stochastic}), that any weak limiting point is an invariant measure.

Next we show that the invariant measure $\mu$ constructed above has finite moments.
Let $R>0$ and define $\phi: \R^d \times \mathcal{H}_H \rightarrow \R$ by $\phi_R(y,w):=R \wedge |y|^m $. Since $\mu$ is a weak limit of $\mathcal{R}_{T_n} \mu_0$ for some sequence $T_n \rightarrow +\infty$, we have that $$\int_{\R^d \times \mathcal{H}_H} \phi_R(y,w)d \mathcal{R}_{T_n} \mu_0 \rightarrow \int_{\R^d \times \mathcal{H}_H} \phi_R(y,w) d\mu.$$ Moreover, using Lemma \ref{lem:disintegration} and Lemma \ref{lem:confusion-xi}, we have
\begin{align*}
\int_{\R^d \times \mathcal{H}_H} \phi_R(y,w)d \mathcal{R}_{T_n} \mu_0 &= \frac{1}{T} \int_0^T \EE \left( \EE \phi_R(\Theta(0,t,0,w)) \right)_{w=\bar W}
\\ & = \frac{1}{T} \int_0^T \EE \phi_R(\Theta^{(0,\bar W)} (0,t)) dt .    
\end{align*}
The uniform-in-time bound on the moments \eqref{eq:lyapunov} implies that $$\int_{\R^d \times \mathcal{H}_H} \phi_R(y,w)d \mathcal{R}_{T_n} \mu_0 \leq C.$$ Passing to the limit in $n$, we get
$$\int_{\R^d \times \mathcal{H}_H} \phi_R(y) d\mu  \leq C.$$ Finally, by the monotone convergence theorem, we deduce that
\begin{align}\label{eq:inv-moments}
    \int_{\R^d \times \mathcal{H}_H} |y|^m d\mu \leq C .
\end{align}

\paragraph{Proof of $(ii)$.}
We prove uniqueness via an exponential contraction under the assumption
that 
\begin{align*}
-\kappa_1+\mathbf{M}(2,\gamma,H,L_F,\Xi) \|b \|_{\mathcal{C}^{\gamma}} < 0 ,
\end{align*}
where recall that $\kappa_1$ is the dissipation constant in \eqref{eq:Fdissipative} and $\mathbf{M}(2,\gamma,H,L_F)$ is the constant from \eqref{eq:decay-Phi}. Since $\R^d \times \mathcal{H}_H$ is a Polish space and the marginals of $\mu$ and $\nu$ on $\mathcal{H}_H$ are the same, using the gluing lemma (\cite{villani2008optimal}), one can construct a triple $(X_0,\bar W,Y_0)$ on a probability space $(\Omega',\mathcal{F}',\mathbb{P}')$ such that $(X_0,\bar W)$ has law $\mu$ and $(Y_0,\bar W)$ has law $\nu$.

Let $t_0 \in \R$. On the extended probability space $$(\widetilde \Omega:= \Omega' \times  \Omega, \widetilde{\mathcal{F}} := \mathcal{F}' \times \mathcal{F},  \widetilde{\mathbb P}:= \mathbb{P}'\times \mathbb{P}),$$ we consider for any $t \ge t_0$ the random variables
$$ \Theta(t_0,t,X_0, \bar W )  \ \text{and} \  \Theta(t_0,t,Y_0, \bar W ) .$$ 
By definition of the invariant measure, for any bounded measurable function $f: \R^d \times \mathcal{H}_H \rightarrow \R$, we have for any $t \ge t_0$
\begin{align*}
 \widetilde \EE f(X_0,\bar W) & = \EE_{\mathbb P'} \left( \EE_{\mathbb P} f(\Theta(t_0,t,x,w)) \right)_{(x,w)=(X_0,\bar W)} \\
  \widetilde \EE f(Y_0,\bar W) & = \EE_{\mathbb P'} \left( \EE_{\mathbb P} f(\Theta(t_0,t,y,w)) \right)_{(y,w)=(Y_0,\bar W)} .
\end{align*}
Hence, using Lemma \ref{lem:disintegration}, we deduce that $\Theta(t_0,t,X_0, \bar W )$ and $\Theta(t_0,t,Y_0, \bar W )$ have respective laws $\mu$ and $\nu$ for any $t$. Therefore, equipping the space $\R^d \times \mathcal{H}_H$ with the norm
\begin{align*}
    \| (x,w) \| = |x| + \|w\|_{\mathcal{H}_H},
\end{align*}
we have that for any $m \in [2,\infty)$,
\begin{align*}
    \mathcal{W}_m(\mu,\nu) & \leq \big( \widetilde \EE \|\Theta(t_0,t,X_0, \bar W ) -\Theta(t_0,t,Y_0, \bar W ) \|^m \big)^{1/m} \\ & = \big(  \widetilde \EE|X(t_0,t,x, \mathbf{A}^{t_0}(\bar W) ) -X(t_0,t,y, \mathbf{A}^{t_0}(\bar W) ) |^m  \big)^{1/m} .
\end{align*}
Using Lemma \ref{lem:confusion-xi}, \eqref{eq:decay-Phi} and \eqref{eq:inv-moments}, it follows that
\begin{align*}
 \mathcal{W}_m(\mu,\nu) & \leq C \exp\{-\kappa_1 (t-t_0)+ \mathbf{M}(m,\gamma,H,L_F,\Xi)\|b\|_{\mathcal{C}^\gamma}   (t-t_0)\} (\EE_{\mathbb{P'}} |X_0-Y_0|^m)^{1/m}.
\end{align*}
Taking $m=2$ and letting $t$ go to $+\infty$, we conclude that $\mathcal{W}_2(\mu,\nu) = 0$, so $\mu=\nu$. Choosing $(X_0,\bar W)$ to have law $\mu_0$ rather than $\mu$, one can run the same argument to show that 
\begin{align*}
\mathcal{W}_m(\mathcal{L}(\Theta(t_0,t,X_0,\mathbf{A}^{t_0}(\bar W)), \mu) \leq C \exp\{\left( -\kappa_1 +  \mathbf{M}(m,\gamma,H,L_F,\Xi)\|b\|_{\mathcal{C}^\gamma}  \right)   (t-t_0)\},
\end{align*}
and use Lemma \ref{lem:confusion-xi} to get \eqref{eq:conv-inv}.

\appendix

\section{Auxiliary results}\label{app:A}
Let $t_0 \in \R$ and recall the decomposition of $B_{t}-B_{t_0}$ into the history $\bar{B}^{t_0}_{t}$ and the innovation $\widetilde{B}^{t_0}_{t}$ given in \eqref{eq:decomp-fBm} for any $t \ge t_0$. For simplicity, we write $\widetilde{B} := \widetilde{B}^{t_0}$.
The following regularity estimates on the conditional expectation of the fractional Brownian motion are adapted from \cite[Propositions 3.6, 3.7 and 3.8]{butkovsky2021approximation}, where they are written for $B$ instead of $\widetilde{B}$ and are based on Gaussian properties of $B$ and the local nondeterminism property
\begin{align*}
\EE \left(  B_t -\EE^s B_t \right)^2 = C (t-s)^{2H},
\end{align*}
all of which also hold for $\widetilde{B}$.

\begin{lemma}\label{lemreg-B}
Let $(\Omega,\mathcal{F},\mathbb{F},\mathbb{P})$ be a filtered probability space and $W$ be an $\mathbb{F}$-Wiener process. Let $B$ be an fBm defined via $W$ by \eqref{eq:MVN}. 
Let $\beta \in (-\infty,1)$. Then there exists a constant $C>0$ such that for any bounded measurable function  $f \in \mathcal{C}^1$ and any $\mathcal{F}_{s}$-measurable random variables $Z_{1}, Z_2,Z_3,Z_4$, and any $t_0 < s < t$, the following holds
\begin{enumerate}[label=(\roman*)]
\item $| \mathbb{E}^s f( \widetilde{B}_t + Z_1 ) | \leq C \|f\|_{\mathcal{C}^\beta} (t-s)^{H(\beta \wedge 0)}$\text{;}

\item $| \mathbb{E}^s \big( f( \widetilde{B}_t + Z_1 ) -f( \widetilde{B}_t + Z_2 )  \big)  | \leq C \|f\|_{\mathcal{C}^{\beta}} |Z_1-Z_2| (t-s)^{H(\beta-1)}$\text{;} 

\item and
\begin{align*}
& | \mathbb{E}^s \big( f( \widetilde{B}_t + Z_1 ) -f( \widetilde{B}_t + Z_2 ) -f( \widetilde{B}_t + Z_3 ) +f( \widetilde{B}_t + Z_4 )   \big)  |\\ &  \leq C \|f\|_{\mathcal{C}^{\beta}} |Z_1-Z_2-Z_3+Z_4| (t-s)^{H(\beta-1)}  \\ & \quad + C \|f\|_{\mathcal{C}^{\beta}} |Z_1-Z_2| | Z_1-Z_3|  (t-s)^{H(\beta-2)} \text{.} 
\end{align*}
\end{enumerate}
\end{lemma}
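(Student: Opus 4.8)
The plan is to reduce all three estimates to pointwise bounds on the Gaussian (heat) semigroup applied to $f$, exploiting that after conditioning on $\mathcal{F}_s$ the innovation $\widetilde{B}_t$ is a Gaussian perturbation of an $\mathcal{F}_s$-measurable quantity. First I would decompose $\widetilde{B}_t = \EE^s\widetilde{B}_t + N$, where $N := \widetilde{B}_t-\EE^s\widetilde{B}_t = \alpha_H\int_s^t(t-u)^{H-1/2}\,dW_u$ is a centred Gaussian vector independent of $\mathcal{F}_s$ with covariance $c_H(t-s)^{2H}I_d$; this is exactly the local nondeterminism recorded above. Since each $Z_i$ and $\EE^s\widetilde{B}_t$ are $\mathcal{F}_s$-measurable, conditioning integrates out only $N$, yielding the identity $\EE^s f(\widetilde{B}_t+Z_i) = (P_\varepsilon f)(\EE^s\widetilde{B}_t+Z_i)$ with $\varepsilon := \tfrac12 c_H(t-s)^{2H}\simeq (t-s)^{2H}$ and $P_\varepsilon$ the Gaussian semigroup of the notation section. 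Thus every left-hand side becomes a finite difference of the smooth function $g := P_\varepsilon f$ evaluated at the $\mathcal{F}_s$-measurable points $a_i := \EE^s\widetilde{B}_t + Z_i$.

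The second ingredient is the standard smoothing estimate $\|\nabla^k P_\varepsilon f\|_\infty \le C\|f\|_{\mathcal{C}^\beta}\varepsilon^{(\beta-k)/2}$ for $\varepsilon\in(0,1]$ and $k\in\{0,1,2\}$. For $\beta<0$ this follows from the definition of $\|\cdot\|_{\mathcal{C}^\beta}$ together with the factorisation $P_\varepsilon = P_{\varepsilon/2}P_{\varepsilon/2}$ and the elementary kernel bounds $\|\nabla^k P_{\varepsilon/2}\|_{\infty\to\infty}\lesssim \varepsilon^{-k/2}$; for $\beta\in[0,1)$ one uses instead $\|f\|_\infty\le\|f\|_{\mathcal{C}^\beta}$ and the Hölder seminorm together with $\int\nabla^k p_{\varepsilon/2}=0$. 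Inserting $\varepsilon\simeq (t-s)^{2H}$ converts the exponent $(\beta-k)/2$ into $H(\beta-k)$, exactly the powers in (i)–(iii). Estimate (i) is then immediate from the case $k=0$ (the convention $(t-s)^{H(\beta\wedge 0)}$ covering both signs of $\beta$), and estimate (ii) follows from the fundamental theorem of calculus, $|g(a_1)-g(a_2)|\le\|\nabla g\|_\infty|a_1-a_2|=\|\nabla g\|_\infty|Z_1-Z_2|$, combined with the $k=1$ bound.

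For (iii) the real content is to split the second-order difference of $g$ into a genuine mixed second difference, controlled by the Hessian, and a first-order defect. Writing $a:=Z_1-Z_2$, $c:=Z_1-Z_3$ and $e:=Z_1-Z_2-Z_3+Z_4$, one has $a_4 = a_2+a_3-a_1+e$, so
\begin{align*}
g(a_1)-g(a_2)-g(a_3)+g(a_4) &= \big[g(a_1)-g(a_2)-g(a_3)+g(a_2+a_3-a_1)\big] \\
&\quad + \big[g(a_4)-g(a_2+a_3-a_1)\big].
\end{align*}
The second bracket equals $\int_0^1\nabla g(a_2+a_3-a_1+\theta e)\cdot e\,d\theta$ and is bounded by $\|\nabla g\|_\infty|e|$. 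The first bracket is the mixed difference of $g$ over the parallelogram with edges $-a,-c$ based at $a_1$; a double application of the fundamental theorem of calculus rewrites it as $\int_0^1\!\int_0^1 a^\top\nabla^2 g(a_1-\theta_1 a-\theta_2 c)\,c\,d\theta_1\,d\theta_2$, hence is bounded by $\|\nabla^2 g\|_\infty|a||c|$. Feeding in the $k=1$ and $k=2$ smoothing bounds produces precisely the two terms $\|f\|_{\mathcal{C}^\beta}(t-s)^{H(\beta-1)}|Z_1-Z_2-Z_3+Z_4|$ and $\|f\|_{\mathcal{C}^\beta}(t-s)^{H(\beta-2)}|Z_1-Z_2||Z_1-Z_3|$ claimed in (iii).

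The only delicate point I anticipate is the bookkeeping in (iii): one must expand around the corner $a_2+a_3-a_1$ (equivalently, around $a_1$) so that the Hessian term carries exactly the factors $|Z_1-Z_2|$ and $|Z_1-Z_3|$, since a naive grouping yields the wrong increments such as $|Z_2-Z_4|$. Everything else is routine once the conditional-expectation-as-heat-semigroup identity is established; indeed these are the $\widetilde{B}$-analogues of \cite[Propositions 3.6, 3.7 and 3.8]{butkovsky2021approximation}, the transfer from $B$ to $\widetilde{B}$ being justified solely by the local nondeterminism property recorded above.
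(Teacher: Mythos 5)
Your proposal is correct, and it is essentially the argument behind the result the paper relies on: the paper itself gives no proof of Lemma \ref{lemreg-B}, merely citing \cite[Propositions 3.6--3.8]{butkovsky2021approximation} and noting that the transfer from $B$ to $\widetilde{B}$ rests on the Gaussian/local-nondeterminism structure, which is exactly the identity $\EE^s f(\widetilde{B}_t+Z)=(P_\varepsilon f)(\EE^s\widetilde{B}_t+Z)$ with $\varepsilon\simeq(t-s)^{2H}$ that you establish, followed by the standard heat-kernel smoothing bounds and the finite-difference bookkeeping you carry out (your expansion of the mixed second difference around $a_1$ is the right one). The only caveat worth recording is that the smoothing estimate $\|P_\varepsilon f\|_\infty\lesssim\varepsilon^{\beta/2}\|f\|_{\mathcal{C}^\beta}$ for $\beta<0$ is only available from the definition \eqref{eq:alpha-norm} for $\varepsilon$ in a bounded range, so the constant $C$ is uniform only for $t-s$ bounded (which is how the lemma is used throughout the paper, always with $t-s\leq 1$).
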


We recall from \cite{dareiotis2024regularisation}  an extension of the stochastic sewing lemma of \cite{le2020stochastic}, which is a key tool in the regularisation by noise analysis of  Section~\ref{sec:well-posedness}.

\begin{lemma}[Stochastic sewing lemma]\label{lemSSL}
Let $t_0 \leq S<T$, $m \in [2, \infty)$ and $q \in [m,\infty]$. Let $(\Omega,\mathcal{F},\mathbb{F},\mathbb{P})$ be a filtered probability space. Let $A : \Delta_{[S,T]} \rightarrow L_m$ be such that $A_{s,t}$ is $\mathcal{F}_t$-measurable for any $(s,t) \in \Delta_{[S,T]}$. Assume that there exist constants $\Gamma_1,\Gamma_2\geq 0$ and $\varepsilon_1,\varepsilon_2>0$ such that for any $(s,t) \in \Delta_{[S,T]}$ and $u = (s+t)/2$,
\begin{align}
    \|\EE^s[\delta A_{s,u,t}]\|_{L_q}&\leq \Gamma_1 \,  (t-s)^{1+\varepsilon_1},\label{sts1}\\
   \|  \delta A_{s,u,t}  \|_{L_{m,q}^{\mathcal{F}_S}} &\leq \Gamma_2 \, (t-s)^{\frac{1}{2}+\varepsilon_2}. \label{sts2}
\end{align}
Then there exists a unique process $(\mathcal{A}_t)_{t\in [S,T]}$ such that $\mathcal{A}_S=0$ and for all $(s,t) \in \Delta_{[S,T]}$, the following bounds hold for some constants $K_1,K_2>0$
\begin{align} \label{sts3}
   \|   \mathcal{A}_t-\mathcal{A}_s - A_{s,t}  \|_{L_{m,q}^{\mathcal{F}_S}} \leq K_1 (t-s)^{1/2+\varepsilon_2} \quad \text{and} \quad \| \EE^s (\mathcal{A}_t-\mathcal{A}_s-A_{s,t} ) \|_{L_q} \leq K_2 (t-s)^{1+\varepsilon_1} .
\end{align} 

Moreover, there exists a constant $C=C(\varepsilon_1,\varepsilon_2,m, \alpha_1, \alpha_2)$ independent of $S,T$ such that for every $(s,t) \in \Delta_{[S,T]}$ we have
\begin{align*}
    \| \mathcal{A}_t-\mathcal{A}_s \|_{L_{m,q}^{\mathcal{F}_S}} \leq C\, \Gamma_1  (t-s)^{ 1+\varepsilon_1} + C \, \Gamma_2   (t-s)^{ \frac{1}{2}+\varepsilon_2} .
\end{align*}
\end{lemma}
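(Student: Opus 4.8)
The statement to prove is the stochastic sewing lemma, and the plan is to follow the dyadic-refinement argument of \cite{le2020stochastic}, carried out in the conditional norms $\|\cdot\|_{L_{m,q}^{\mathcal{F}_S}}$ as in \cite{dareiotis2024regularisation}. First I would fix $(s,t)\in\Delta_{[S,T]}$ and, for each $n\ge 0$, let $\pi_n=\{t_i^n=s+i(t-s)2^{-n}\}_{i=0}^{2^n}$ be the dyadic partition of $[s,t]$ into $2^n$ equal subintervals, and set the Riemann-type sum $A^n_{s,t}:=\sum_{i=0}^{2^n-1}A_{t_i^n,t_{i+1}^n}$. The point of the dyadic choice is that refining $\pi_n$ into $\pi_{n+1}$ inserts exactly the midpoints $u_i^n=(t_i^n+t_{i+1}^n)/2$, so that $A^{n+1}_{s,t}-A^n_{s,t}=-\sum_{i}\delta A_{t_i^n,u_i^n,t_{i+1}^n}$, precisely the configuration for which \eqref{sts1} and \eqref{sts2} are assumed. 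The candidate increment will be $\mathcal{A}_t-\mathcal{A}_s:=\lim_{n\to\infty}A^n_{s,t}$.

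The heart of the argument is the Cauchy estimate for $(A^n_{s,t})_n$. I would decompose each increment $\delta A_{t_i^n,u_i^n,t_{i+1}^n}=D_i+\EE^{t_i^n}\delta A_{t_i^n,u_i^n,t_{i+1}^n}$, where $D_i:=\delta A_{t_i^n,u_i^n,t_{i+1}^n}-\EE^{t_i^n}\delta A_{t_i^n,u_i^n,t_{i+1}^n}$. Since $A_{s,t}$ is $\mathcal{F}_t$-measurable, each $D_i$ is $\mathcal{F}_{t_{i+1}^n}$-measurable and conditionally centred given $\mathcal{F}_{t_i^n}$, so $(D_i)_i$ is a martingale-difference sequence for the filtration $(\mathcal{F}_{t_i^n})_i$. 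A conditional Burkholder--Davis--Gundy inequality (valid because $m\ge 2$) then yields $\big\|\sum_i D_i\big\|_{L_{m,q}^{\mathcal{F}_S}}\le C\big(\sum_i\|D_i\|_{L_{m,q}^{\mathcal{F}_S}}^2\big)^{1/2}$, and \eqref{sts2} bounds each term by $2\Gamma_2\big((t-s)2^{-n}\big)^{1/2+\varepsilon_2}$; summing the $2^n$ squares gives a contribution $\lesssim \Gamma_2(t-s)^{1/2+\varepsilon_2}2^{-n\varepsilon_2}$. The conditional-expectation terms are controlled directly by the triangle inequality and \eqref{sts1}, giving $\lesssim \Gamma_1(t-s)^{1+\varepsilon_1}2^{-n\varepsilon_1}$. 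Because $\varepsilon_1,\varepsilon_2>0$, summing the resulting geometric series over $n$ shows $(A^n_{s,t})_n$ is Cauchy in $L_{m,q}^{\mathcal{F}_S}$, identifies the limit, and already produces the final moment bound $\|\mathcal{A}_t-\mathcal{A}_s\|_{L_{m,q}^{\mathcal{F}_S}}\le C\Gamma_1(t-s)^{1+\varepsilon_1}+C\Gamma_2(t-s)^{1/2+\varepsilon_2}$.

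Next I would verify that these limits are consistent across subintervals, so that $\mathcal{A}_t-\mathcal{A}_s$ genuinely arises from a process $\mathcal{A}$ with $\mathcal{A}_S=0$, and then read off \eqref{sts3}. Since $A^0_{s,t}=A_{s,t}$, the difference $\mathcal{A}_t-\mathcal{A}_s-A_{s,t}=\sum_{n\ge0}(A^{n+1}_{s,t}-A^n_{s,t})$ is exactly the telescoping sum estimated above; its centred part supplies the first bound in \eqref{sts3} with exponent $1/2+\varepsilon_2$, while applying $\EE^s$ annihilates all the centred pieces and leaves the conditional-expectation sum, giving the $\EE^s$-version with exponent $1+\varepsilon_1$. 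For uniqueness, if $\mathcal{A}$ and $\tilde{\mathcal{A}}$ both satisfy \eqref{sts3}, the additive quantity $\phi_{s,t}:=(\mathcal{A}_t-\tilde{\mathcal{A}}_t)-(\mathcal{A}_s-\tilde{\mathcal{A}}_s)$ obeys $\|\phi_{s,t}\|_{L_{m,q}^{\mathcal{F}_S}}\le 2K_1(t-s)^{1/2+\varepsilon_2}$ and $\|\EE^s\phi_{s,t}\|_{L_q}\le 2K_2(t-s)^{1+\varepsilon_1}$; writing $\phi_{s,t}=\sum_i\phi_{t_i^n,t_{i+1}^n}$ over $\pi_n$ (legitimate since $\delta\phi=0$) and splitting once more into centred and conditional-expectation parts, both sums vanish as $n\to\infty$ thanks to $\varepsilon_1,\varepsilon_2>0$, forcing $\phi_{s,t}=0$.

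The main obstacle is the martingale part in the conditional norm: one needs a conditional BDG (or conditional Doob) inequality guaranteeing that a conditionally centred martingale-difference sum is dominated by the square root of the sum of squared conditional $L_{m,q}^{\mathcal{F}_S}$-norms of its increments; this is where $m\ge 2$ is used and where care is required to keep the outer $L_q(\EE^S)$ structure intact. The only other subtlety is that \eqref{sts1}--\eqref{sts2} are posited solely at the midpoint $u=(s+t)/2$, but this is exactly compatible with dyadic refinement, so no estimate at a general intermediate point is ever needed.
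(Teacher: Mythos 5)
The paper does not prove this lemma: it is quoted verbatim from \cite{dareiotis2024regularisation} as an extension of \cite{le2020stochastic}, so there is no in-paper argument to compare against. Your dyadic-refinement proof --- midpoint insertion, splitting $\delta A$ into a conditionally centred martingale-difference part handled by a conditional BDG/orthogonality estimate and a drift part handled by the triangle inequality together with \eqref{sts1}, then summing geometric series in $n$ --- is precisely the standard argument underlying those references, and the uniqueness argument via additive remainders vanishing along refining partitions is also the standard one. The only step you leave genuinely unexamined is the consistency of the limits across subintervals: because \eqref{sts1}--\eqref{sts2} are assumed only at the midpoint, additivity of $\mathcal{A}_t-\mathcal{A}_s$ does not follow by simply citing partition-independence of the limit; one must, e.g., run the same Cauchy estimate along the sequence of partitions of $[S,t]$ obtained by concatenating dyadic partitions of $[S,s]$ and $[s,t]$ (each still refined by midpoint insertion) to identify $\mathcal{A}_t-\mathcal{A}_s$ with the dyadic limit over $[s,t]$; this is routine but should be recorded, as it is the one place where the midpoint-only hypothesis must be reconciled with the two-parameter structure.
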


Finally, we recall a result regarding conditional expectations that is useful in the proof of Theorem \ref{cor:markov}.

\begin{lemma}\label{lem:disintegration}
Let $(\Omega,\mathcal F,\mathbb{P})$ be a probability space and let $E$ be a topological space
space endowed with its Borel $\sigma$-algebra $\mathcal B(E)$.  Let $J:\Omega\to E$ be an $E$-valued random variable, and $\mathcal F_J:=\sigma(J)$. Let $\mathcal{G} \subseteq \mathcal{F}$ be a sigma algebra independent of $\sigma(J)$. Let $f:E\times\Omega\to\R$ be a bounded $(\mathcal B(E)\otimes\mathcal G)$--measurable function, then
\begin{equation}\label{eq:key}
  \EE f\big(J(\cdot),\cdot\big) = \EE g_f\big(J(\cdot)\big),
\end{equation}
where $g_f(x):=\EE f(x,\cdot)$.
\end{lemma}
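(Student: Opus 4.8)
The plan is to establish the identity first for a multiplicative generating class of functions and then to bootstrap to all bounded $\mathcal{B}(E)\otimes\mathcal{G}$-measurable $f$ via the functional monotone class theorem. The essential structural input is that $f$ depends on $\omega$ only through $\mathcal{G}$, while $X$ is independent of $\mathcal{G}$.

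First I would verify \eqref{eq:key} for product functions $f(x,\omega)=h(x)k(\omega)$, with $h:E\to\R$ bounded and Borel measurable and $k:\Omega\to\R$ bounded and $\mathcal{G}$-measurable. Here $h(X)$ is $\sigma(X)$-measurable and $k$ is $\mathcal{G}$-measurable, so the independence of $\sigma(X)$ and $\mathcal{G}$ gives $\EE f(X(\cdot),\cdot)=\EE[h(X)k]=\EE[h(X)]\,\EE[k]$. On the other hand $g_f(x)=\EE[h(x)k]=h(x)\,\EE[k]$ is manifestly Borel measurable, and $\EE g_f(X)=\EE[h(X)]\,\EE[k]$; the two sides agree. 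Linearity then covers finite linear combinations of such products.

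Next I would run the monotone class argument. Let $\mathcal{K}$ denote the family of products $h(x)k(\omega)$ as above; it is closed under multiplication, and the indicators $\mathbf{1}_{A\times B}$ (with $A\in\mathcal{B}(E)$, $B\in\mathcal{G}$) belong to it, so $\mathcal{K}$ is a multiplicative system generating $\mathcal{B}(E)\otimes\mathcal{G}$. Let $\mathcal{H}$ be the set of bounded $\mathcal{B}(E)\otimes\mathcal{G}$-measurable functions $f$ for which $x\mapsto g_f(x)$ is Borel measurable and \eqref{eq:key} holds. By the first step $\mathcal{K}\subset\mathcal{H}$ and $\mathcal{H}$ contains the constants; $\mathcal{H}$ is plainly a vector space; and if $0\le f_n\uparrow f$ with $f_n\in\mathcal{H}$ and $f$ bounded, then monotone convergence gives $g_{f_n}\uparrow g_f$ pointwise, so $g_f$ is measurable, and the same convergence passes to the limit on both sides of \eqref{eq:key}, whence $f\in\mathcal{H}$. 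The functional monotone class theorem then yields that $\mathcal{H}$ contains every bounded $\mathcal{B}(E)\otimes\mathcal{G}$-measurable function, which is exactly the assertion.

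The only point requiring care is the measurability of $g_f$, which is not part of the hypotheses. Rather than invoking a Fubini-type theorem separately, I would obtain it as a byproduct of the induction: it holds trivially for the products in $\mathcal{K}$ (where $g_f=h\,\EE[k]$) and is preserved under the vector-space and monotone-limit operations defining $\mathcal{H}$. With this in hand the remaining verifications are routine.
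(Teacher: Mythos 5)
Your proposal is correct and follows essentially the same route as the paper's proof: verify \eqref{eq:key} on a multiplicative generating class of product-type functions using the independence of $\sigma(X)$ and $\mathcal G$, then extend to all bounded $\mathcal B(E)\otimes\mathcal G$-measurable functions via the functional monotone class theorem. Your explicit tracking of the measurability of $g_f$ through the induction is a welcome point of care that the paper leaves implicit, but it does not change the structure of the argument.
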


\begin{proof}
Let $A\in\mathcal B(E)$ and $B\in\mathcal G$ and set
$U:=A\times B$.  Assume first that $f(x,\omega) :=\mathds{1}_A(x)\,\mathds{1}_B(\omega)$. By the tower property of conditional expectation and the independence between $\mathds{1}_B$ and $J$, we obtain
\begin{align*}
  \EE f(Y(\cdot),\cdot) = \EE \mathds{1}_A\bigl(J\bigr)\,\mathds{1}_B
  = \EE\,\EE\bigl[ \mathds{1}_A\bigl(J\bigr)\,\mathds{1}_B \mid\mathcal F_J \bigr]
  = \EE \bigl[\mathds{1}_A(J)\,\EE[\mathds{1}_B]\bigr]
  = \EE g_f(J(\cdot),\cdot).
\end{align*}
Hence \eqref{eq:key} holds for all $f$ of the form $\mathds{1}_A(x)\,\mathds{1}_B(\omega)$. Linearity of both sides of \eqref{eq:key} immediately extends the identity to finite linear combinations of indicators of rectangles, i.e. to all $ f(x,\omega) =\sum_{k=1}^{m}\alpha_k\,\mathds{1}_{A_k}(x)\,\mathds{1}_{B_k}(\omega), \alpha_k\in\R$. Let $$\mathscr A:=\Bigl\{\bigcup_{k=1}^m \bigl(A_k\times B_k\bigr): A_k\in\mathcal B(E),\;B_k\in\mathcal G,\;m\in\N\Bigr\}$$
be the algebra generated by measurable rectangles.  Define the vector space
$$\mathcal H:=\Bigl\{k:\,\text{bounded, }(\mathcal B(E)\otimes\mathcal G) \text{-measurable and }\EE k(J(\cdot),\cdot)=\EE g_k(J(\cdot),\cdot)\}$$
where $g_k$ is obtained from $k$ exactly as $g_f$ was obtained from $f$.
The previous arguments show that $\mathds{1}_U\in\mathcal H$ for every $U\in\mathscr A$.
Moreover, by linearity and the bounded monotone convergence theorem, $\mathcal
H$ is closed under bounded monotone point-wise
limits.

Hence, by the functional monotone-class theorem (e.g. \cite[Theorem 5.2.2]{durrett2019probability}), $\mathcal{H}$ contains all bounded functions that are $\sigma(\mathscr A)$-measurable. Since $\sigma(\mathscr A)=\mathcal B(E)\otimes\mathcal G$, we conclude that every bounded $\mathcal{B}(E) \times \mathcal{G}$-measurable $k$ lies in $\mathcal H$. This completes the proof.
\end{proof}

\newcommand{\etalchar}[1]{$^{#1}$}

\end{document}